\documentclass[hidelinks,onefignum,onetabnum]{siamart251216}

\usepackage{lipsum}
\usepackage{amsfonts}
\usepackage{graphicx}
\usepackage{epstopdf}
\usepackage{comment}
\usepackage{bm}
\usepackage{algorithmic}
\ifpdf
  \DeclareGraphicsExtensions{.eps,.pdf,.png,.jpg}
\else
  \DeclareGraphicsExtensions{.eps}
\fi

\usepackage{amssymb}
\usepackage{xcolor}
\usepackage{algorithm}

\usepackage{subcaption}

\renewcommand{\le}{\leqslant}
\renewcommand{\ge}{\geqslant}
\renewcommand{\leq}{\leqslant}
\renewcommand{\geq}{\geqslant}
\renewcommand{\emptyset}{\varnothing}

\newcommand{\real}{\mathbb{R}}

\newcommand{\natu}{\mathbb{N}}

\newcommand{\bsh}{\boldsymbol{h}}

\newcommand{\bsl}{\boldsymbol{\ell}}

\newcommand{\bsx}{\boldsymbol{x}}

\newcommand{\bsone}{\boldsymbol{1}}

\newcommand{\rd}{\,\mathrm{d}}

\newcommand{\supp}{\boldsymbol{s}}

\newsiamremark{remark}{Remark}
\newsiamremark{hypothesis}{Hypothesis}
\crefname{hypothesis}{Hypothesis}{Hypotheses}
\newsiamthm{claim}{Claim}
\newsiamremark{fact}{Fact}
\crefname{fact}{Fact}{Facts}

\headers{Median coarsely scrambled Sobol'}{Ziyang Ye, Chaokun Zhu, and Zexin Pan}

\title{Robust high-dimensional integration using medians of coarsely scrambled Sobol' sequences\thanks{The first author acknowledges the support of the National Natural Science Foundation of China (grant 72571153).}}

\author{Ziyang Ye\thanks{Department of Mathematical Sciences, Tsinghua University, Beijing 100084, People's Repulic of China (yzy23@mails.tsinghua.edu.cn).}\and Chaokun Zhu\thanks{School of Mathematical Sciences, Zhejiang University, 866 Yuhangtang Road, Hangzhou 310058, Zhejiang, People's Repulic of China (\email{ckz@zju.edu.cn}).} \and  Zexin Pan\thanks{Corresponding author. Institute of Fundamental and Transdisciplinary Research, Zhejiang University, 866 Yuhangtang Road, Hangzhou 310058, Zhejiang, People's Repulic of China (\email{zep002@zju.edu.cn}).}
}

\usepackage{amsopn}

\ifpdf
\hypersetup{
	pdftitle={Median coarse},
	pdfauthor={Ziyang Ye, Chaokun Zhu, and Zexin Pan}
}
\fi

\begin{document}

\maketitle

\begin{abstract}
	We study the numerical approximation of high-dimensional integrals using randomized quasi-Monte Carlo (RQMC) methods, with a focus on scrambled Sobol' sequences. While asymptotically faster than Monte Carlo, classical RQMC suffers from error bounds that grow exponentially in the dimension $s$, and its root mean squared error (RMSE) convergence rate is generally no better than $O(N^{-3/2})$. To overcome these limitations, we combine two recent developments: Suzuki's coarse scrambling and the median trick. Coarse scrambling randomizes Sobol' sequences according to their generating base polynomials and significantly reduces the maximal gain coefficient. By appropriately choosing the base polynomials, we show that the coefficient can be made uniformly bounded in $s$, yielding an $O(N^{-1/2})$ RMSE for $L^2$ integrands with a dimension-independent constant. The median trick then enables near-optimal convergence for function classes beyond $L^2$: for $L^p$ integrands with $p\in(1,2)$, the median of independent coarsely scrambled estimates achieves an error of $O(N^{-1+1/p})$ with high probability; for integrands in the Haar wavelet space $\mathcal H_{\mathrm{wav},\alpha,s,p,q}$ with $\alpha_p:=\alpha-(1/p-1/2)_+>0$, we prove a high-probability error bound of $O(N^{-\alpha_p-1/2+\varepsilon})$ for any $\varepsilon>0$; the bound improves to $O(N^{-r-\alpha_p-1/2+\varepsilon})$ when the integrand has dominating mixed derivatives of order $r\ge1$ that belong to $\mathcal H_{\mathrm{wav},\alpha,s,p,q}$. The latter two bounds are uniform in $s$ under suitable conditions on the ANOVA components of the integrands. Numerical experiments confirm the predicted convergence rates and demonstrate the robustness of the proposed approach.
\end{abstract}

\begin{keywords}
	Quasi-Monte Carlo, Tractability
\end{keywords}

\begin{MSCcodes}
	65C05, 65D30
\end{MSCcodes}

\section{Introduction}\label{sec:intro}

We consider the numerical approximation of the integral
\begin{equation*}
I(f):=\int_{(0,1)^s} f(\bm x)\,d\bm x,
\end{equation*}
where $f:(0,1)^s\to\mathbb R$ is Lebesgue integrable. Monte Carlo (MC) approximates $I(f)$ by the sample mean of $f(\bsx)$ at $N$ independent uniform points in $(0,1)^s$. For square-integrable $f$, the root mean square error (RMSE) is $O(N^{-1/2})$, independent of $s$.

To achieve higher convergence rates, quasi-Monte Carlo (QMC) replaces independent samples with low-discrepancy point sets. Standard constructions include lattice rules \cite{dick2022lattice,sloan1994lattice} and digital nets \cite{dick:pill:2010,niederreiter1992random}. In this paper, we focus on digital nets, particularly Sobol' sequences \cite{Sobol1967}. For integrands of bounded variation in the sense of Hardy and Krause, QMC achieves an integration error of \(O(N^{-1}(\log N)^{s-1})\) \cite{hick:2014}. These classical results, however, are typically formulated for integrands on \([0,1)^s\). Since low-discrepancy point sets may contain boundary points, notably the origin, this poses a challenge when the integrand is not defined at the boundary.

Randomized quasi-Monte Carlo (RQMC) methods randomize low-discrepancy point sets to yield unbiased estimates and statistical error assessment for the target integral, while avoiding evaluations at the boundary almost surely. This approach was pioneered by Owen \cite{Owen1995} through his nested uniform scrambling. For square-integrable integrands, scrambled digital nets retain the \(O(N^{-1/2})\) RMSE rate, with an implied constant controlled by the maximal gain coefficient \cite{owen1998scrambling}. The rate further improves to \(O(N^{-3/2}(\log N)^{(s-1)/2})\) under regularity conditions \cite{owen1997scrambled}. For integrands with boundary singularities, convergence can be established under Owen's boundary growth condition \cite{owen2006halton}. Later, Matoušek \cite{Matousek1998} proposed random linear scrambling, which maintains the same RMSE as Owen's scrambling while offering simpler implementation. Nevertheless, these methods still suffer from error bounds that grow exponentially in \(s\) \cite{goda2023improved,pan:owen:2023}, and their convergence rates in general do not exceed \(O(N^{-3/2})\) \cite{loh:2003,pan2023superpolynomial}.

To overcome these difficulties, we combine two recent developments: coarse scrambling and the median trick. Coarse scrambling, introduced by Suzuki \cite{suzuki2025coarse} as a generalization of scrambled Halton points \cite{owen2024gain}, reformulates base-\(b\) generalized Niederreiter sequences (GNS) as equidistributed sequences in base \((b^{e_1},\dots,b^{e_s})\), where \(e_j\) is the degree of the base polynomial for coordinate \(j\) (see Section~\ref{subsec:GNS}). Applying scrambling within this base significantly reduces the maximal gain coefficient: for Sobol' sequences, which are base-\(2\) GNS, this yields a reduction from at least \(2^{s-1}\) to \(O(\log s)\).

The second ingredient, the median trick, offers a way to achieve convergence rates beyond \(O(N^{-3/2})\) without requiring user-specified smoothness parameters. Its use in QMC has been explored in a series of works \cite{goda2024simpleuniversal,goda:lecu:2022,goda2026quasi,goda2024universal,pan2026automatic,pan2026dimension,pan2023superpolynomial,pan2024superpolynomial}, where the median of independent RQMC estimates attains near-optimal convergence rates over various function spaces without prior knowledge of the function smoothness.

In this work, we characterize smoothness beyond \(L^2\) using Haar wavelet spaces \cite{gnewuch2026qmc}. These spaces generalize Sobolev spaces of dominating mixed smoothness and unify regularity conditions expressed in terms of fractional derivatives (as in \cite{dick2008koksma,gnewuch2026qmc,pan2026automatic}) or fractional Vitali variation (as in \cite{dick:2008,goda2026quasi,pan2026automatic,pan2026dimension}).

We now summarize our main contributions. In Section~\ref{sec:main}, we introduce the reduced GNS, where the number of degree-\(n\) base polynomials used in the construction is limited according to a prescribed reduction rule (see Definition~\ref{def:reducedGNS}). Section~\ref{subsec:L2} shows that the maximal gain coefficient for such a reduced GNS is uniformly bounded in $s$; consequently, for \(L^2\) integrands, using \(N\) coarsely scrambled points yields an RMSE of \(O(N^{-1/2})\) with a dimension-independent constant.
Section~\ref{subsec:Lp} further shows that for \(L^p\) integrands with \(p\in(1,2)\), the median of \(R\) such estimates achieves an integration error of \(O(N^{-1+1/p})\) with high probability, i.e., the failure probability decays exponentially in \(R\). Section~\ref{subsec:Haarintegrands} studies integrands in the Haar wavelet space \(\mathcal H_{\mathrm{wav},\alpha,s,p,q}\) with \(\alpha_p:=\alpha-(1/p-1/2)_+>0\); the median attains an error of \(O(N^{-\alpha_p-1/2+\varepsilon})\) for any \(\varepsilon>0\) with high probability. Section~\ref{subsec:Haarderiv} considers integrands whose dominating mixed derivatives of order \(r\ge1\) belong to \(\mathcal H_{\mathrm{wav},\alpha,s,p,q}\); the error then improves to \(O(N^{-r-\alpha_p-1/2+\varepsilon})\). The latter two error bounds are uniform in \(s\) when the ANOVA components  of the integrand satisfy the respective  assumptions of Corollaries~\ref{cor:tractability} and~\ref{cor:deriv}.

The remainder of this paper is organized as follows. Section~\ref{sec:prelim} presents the necessary background on digital nets, coarse scrambling, Walsh series, and Haar wavelet spaces. Section~\ref{sec:main} states and proves our main results.
Section~\ref{sec:numerics} numerically compares the proposed method with existing RQMC constructions and standard MC. Section~\ref{sec:discussion} concludes the paper. Technical proofs are deferred to the appendices.

\section{Notation and background}\label{sec:prelim}

For an integer $s\geq 1$, we write
$1{:}s$ for the set $\{1,\dots,s\}$ and $0{:}s$ for $\{0,1,\dots,s\}$. We use $\mathbb Z$ for the
integers, $\natu$ for the positive integers, and $\mathbb N_0$ for the non-negative integers. For any integer
$N\ge1$, we let $\mathbb Z_N:=\{0,1,\dots,N-1\}$. Let $b$ be a prime
power, and denote by $\mathbb F_b$ the finite field with $b$ elements.  If $b$ is prime, we identify $\mathbb F_b$ with $\mathbb Z_b$ with addition,
subtraction, and multiplication defined modulo $b$.
For $e\in\mathbb N$, $\mathrm{GL}(e,\mathbb F_b)$ denotes the group of nonsingular $e$ by $e$ matrices over $\mathbb F_b$. 

For a subset $u\subseteq1{:}s$ and a vector
$\bm{x}=(x_1,\dots,x_s)$, we write $\bm{x}_u=(x_j)_{j\in u}$.
The cardinality of $u$ is denoted by $|u|$. For
$\bm{k}=(k_1,\dots,k_s)\in\mathbb N_0^s$, we write $|\bm{k}|_1:=\sum_{j=1}^s k_j.$
For vectors of the same length, inequalities are understood componentwise;
for example, $\bm j\ge\bm\ell$ means $j_i\ge\ell_i$ for every coordinate $i$.

For $p\in[1,\infty)$ and $f:(0,1)^s \to \mathbb R$, we write $f\in L^p$ if 
$$\|f\|_{L^p}:=\left(\int_{(0,1)^s}|f(\bm x)|^p\,d\bm x\right)^{1/p}<\infty.$$
For $\bm\tau=(\tau_1,\dots,\tau_s)\in\mathbb N_0^s$, we define the mixed partial derivative
\[
f^{(\bm\tau)}:=\frac{\partial^{|\bm\tau|}f}{\partial x_1^{\tau_1}\cdots\partial x_s^{\tau_s}},
\]
where $f^{(\bm 0)}=f$ by convention. For $r\geq 1$, we write $f\in W_{\mathrm{mix}}^{r,p}$ if the weak derivatives $f^{(\bm\tau)}$ exist for all $\bm\tau\in\{0{:}r\}^s$ and belong to $L^p$.

For $h\in\mathbb N_0$, we write its $b$-adic expansion in the sparse form
$h=\sum_{i=1}^{\nu}c_i b^{a_i-1}$ with
$c_i\in\mathbb Z_b\setminus\{0\}$ and
$a_1>\cdots>a_\nu\ge1$, where $\nu=0$ if $h=0$. For $r\in\mathbb N_0$, define
\begin{equation*}
	\mu_r(h)
	:=
	\sum_{i=1}^{\min(r,\nu)}a_i.
\end{equation*}
Thus $\mu_0(h)=0$ and $\mu_1(h)$ is the largest nonzero digit position of $h$.
For $\bm{h}=(h_1,\dots,h_s)\in\mathbb N_0^s$, we write
\begin{equation}\label{eqn:bsmu}
\Vec{\bm{\mu}}_r(\bm{h})
:=
(\mu_r(h_1),\dots,\mu_r(h_s)) \quad\text{and}\quad
\mu_r(\bm{h}):=
\sum_{j=1}^s\mu_r(h_j).
\end{equation}

\subsection{Digital nets and block affine matrix scrambling}\label{subsec:GNS}

For later matrix operations, we adopt the following digit-vector notation. For $k\in\mathbb N_0$ with $b$-adic expansion
$k=\eta_0+\eta_1b+\eta_2b^2+\cdots$, we denote its digit vector by
\begin{equation*}
\vec k=(\iota(\eta_0),\iota(\eta_1),\iota(\eta_2),\ldots)^\top
\in\mathbb F_b^{\mathbb N},
\end{equation*}
where $\iota:\mathbb Z_b\to\mathbb F_b$ is a fixed bijection with $\iota(0)=0$. 
Similarly, for $x\in[0,1)$, we choose the expansion
\begin{equation*}
x=\xi_1b^{-1}+\xi_2b^{-2}+\cdots,
\qquad
\xi_r\in\mathbb Z_b,
\end{equation*}
with infinitely many digits different from $b-1$, and set
\begin{equation*}
\vec x=(\iota(\xi_1),\iota(\xi_2),\ldots)^\top\in\mathbb F_b^{\mathbb N}.
\end{equation*}
Conversely, for $\vec z=(z_1,z_2,\ldots)^\top\in\mathbb F_b^{\mathbb N}$, we define
\begin{equation*}
\phi_b(\vec z):=\sum_{r=1}^{\infty}\iota^{-1}(z_r)b^{-r}.
\end{equation*}
All vector and matrix operations involving digit vectors are performed over $\mathbb F_b$.

Using the above notation, a digital sequence $\mathcal S$ generated by matrices
$C_1,\ldots,C_s\in\mathbb F_b^{\mathbb N\times\mathbb N}$ is constructed as
follows. For $n\in\mathbb N_0$, let $\vec n$ be its digit vector. For each $j=1,\dots,s$, set
\begin{equation*}
\vec y_{n,j}:=C_j\vec n
=
(y_{n,j,1},y_{n,j,2},\ldots)^\top
\in\mathbb F_b^{\mathbb N},
\end{equation*}
and define the $j$-th coordinate of the $n$-th point by
\begin{equation*}
x_{n,j}:=\phi_b(\vec y_{n,j})
=\sum_{r=1}^{\infty}\iota^{-1}(y_{n,j,r})b^{-r}.
\end{equation*}
For $m\in\mathbb N$, let
$\mathcal P_m:=\{\bm{x}_0,\ldots,\bm{x}_{b^m-1}\}\subseteq[0,1)^s$.
Equivalently, $\mathcal P_m$ is the digital net formed by the first $b^m$
points of $\mathcal S$, or generated by the first $m$ columns of
$C_1,\ldots,C_s$.

For the reader's convenience, we restate the following definitions from \cite{suzuki2025coarse,tezuka2013discrepancy}.

\begin{definition}[Equidistributed sequence in base $\bm b$]
	Let $\mathcal S=\{\bm x_n\}_{n\geq 0}\subseteq [0,1)^s$ be a sequence of
	points. We call $\mathcal S$ an equidistributed sequence in base
	$\bm b=(b_1,\ldots,b_s)$ if, for any
	$r,k_1,\ldots,k_s\in\mathbb N_0$, every elementary interval of the form 
    \begin{equation*}
\prod_{j=1}^s
\left[
\frac{a_j}{b_j^{k_j}},
\frac{a_j+1}{b_j^{k_j}}
\right),
\qquad
a_j\in\mathbb Z_{b_j^{k_j}} .
\end{equation*}
 contains exactly one point from
	$\{\bm x_{rB},\ldots,\bm x_{(r+1)B-1}\}$, where
	$B=b_1^{k_1}\cdots b_s^{k_s}$.
\end{definition}

\begin{definition}[Generalized Niederreiter sequence, GNS]\label{def:generalized-niederreiter}
	Let $b$ be a prime power and let the base polynomials
	$p_1(x),p_2(x),\ldots,p_s(x)$ be monic polynomials in $\mathbb F_b[x]$
	which are pair-wise coprime. Let $e_j:=\deg(p_j)$. For $k\in\mathbb N$ and
	$1\le j\le s$, let $k_j=\lfloor(k-1)/e_j\rfloor$ and let
	$y_k^{(j)}(x)\in\mathbb F_b[x]$ with the restriction that the residue
	polynomials $y_k^{(j)}(x)\bmod p_j(x)$ for
	$(t-1)e_j\le k-1<te_j$ are linearly independent over $\mathbb F_b$.
	Consider the Laurent series expansion
	\begin{equation*}
	\frac{y_k^{(j)}(x)}{(p_j(x))^t}
	=
	\sum_{r=1}^{\infty}
	\frac{a^{(j)}(t,k,r)}{x^r}
	\in \mathbb F_b((x^{-1})).
	\end{equation*}
	The generalized Niederreiter sequence (GNS) is defined as the digital sequence
	whose generating matrices are $C_j=(c_{k,r}^{(j)})_{k,r\in\mathbb N}$,
	$1\le j\le s$, with
	\begin{equation*}
	c_{k,r}^{(j)}=a^{(j)}(k_j+1,k,r).
	\end{equation*}
\end{definition}

\begin{definition}[Block affine matrix scramble]
	For a prime number $b\ge2$, the block affine matrix scramble in base
	$(b^{e_1},\ldots,b^{e_s})$ is a random mapping from $[0,1)^s$ to $[0,1)^s$ defined as follows. For $j\in 1{:}s$, let
	$D_j\in\mathbb F_b^{\mathbb N}$ and
	$M_j\in\mathbb F_b^{\mathbb N\times\mathbb N}$ be independent random vectors
	and matrices. Each $D_j$ has independent and identically distributed uniform entries in $\mathbb F_b$.
    Each $M_j$ is a block lower-triangular matrix with block size
	$e_j$, where diagonal blocks are drawn uniformly from the nonsingular
	matrices in $\mathrm{GL}(e_j,\mathbb F_b)$, and off-diagonal blocks are drawn
	uniformly from all matrices in $\mathbb F_b^{e_j\times e_j}$. The scramble then maps $\bm x=(x_1,\dots,x_s)$ to
	\begin{equation*}
	\widetilde{\bm x}
	:=
	\left(
	\phi_b(M_1\vec x_1+D_1),
	\ldots,
	\phi_b(M_s\vec x_s+D_s)
	\right).
	\end{equation*}
    For a sequence $\mathcal{S}\subseteq[0,1)^s$, the scrambled sequence is obtained by applying this map elementwise.
\end{definition}

By \cite[Theorem~1]{tezuka2013discrepancy}, a GNS $\mathcal{S}$ with base polynomials $p_1(x),\dots,p_s(x)$ is equidistributed in base $(b^{e_1},\dots,b^{e_s})$ with $e_j=\deg(p_j)$. We refer to the randomization of $\mathcal{S}$ by the block affine matrix scramble in this base as \textbf{coarse scrambling}.

We note that for each $j\in 1{:}s$, $M_j\vec x_j+D_j$ has the same distribution as $D_j$, and hence $\phi_b(M_j\vec x_j+D_j)$ is uniform on $[0,1)$. Independence then implies that $\widetilde{\bm x}$ is uniform on $[0,1)^s$ for any fixed $\bm x$. Furthermore, the support of coarsely scrambled GNS lies in $(0,1)^s$ almost surely.

\subsection{Walsh expansion and dual net}

For $h\in\mathbb N_0$ and $x\in[0,1)$, write
$h=\eta_0+\eta_1b+\cdots+\eta_{a-1}b^{a-1}$ and
$x=\xi_1b^{-1}+\xi_2b^{-2}+\cdots$, where $\eta_r,\xi_r\in\mathbb Z_b$ and
$\eta_r=0$ for all sufficiently large $r$. The $h$-th $b$-adic Walsh function
is
\begin{equation*}
\operatorname{wal}_h(x)
:=
\omega_b^{\sum_{r=0}^{a-1}\eta_r\xi_{r+1}},
\qquad
\omega_b:=\exp(2\pi\sqrt{-1}/b).
\end{equation*}
For $\bm h=(h_1,\ldots,h_s)$, set
$\operatorname{wal}_{\bm h}(\bm x):=\prod_{j=1}^s\operatorname{wal}_{h_j}(x_j)$.
The system $\{\operatorname{wal}_{\bm h}:\bm h\in\mathbb N_0^s\}$ is a
complete orthonormal basis of the $L^2$ function space. Hence, for $f\in L^2$, we have the Walsh expansion in the $L^2$-sense
\begin{equation}\label{eqn:Walshexpansion}
	f(\bm x)=\sum_{\bm h\in\mathbb N_0^s}
	\widehat f(\bm h)\operatorname{wal}_{\bm h}(\bm x)\quad \text{for}\quad
	\widehat f(\bm{h})=\int_{(0,1)^s}f(\bm{x})\overline{\operatorname{wal}_{\bm{h}}(\bm{x})}\,d\bm{x}.
\end{equation}

The next lemma characterizes the RMSE of coarsely scrambled RQMC estimates in terms of the Walsh coefficients.

\begin{lemma}\label{thm:walsh-error-scrambled-net}
	Let 
	$\widetilde{\mathcal P}_m
	=\{\widetilde{\bm x}_0,\ldots,\widetilde{\bm x}_{b^m-1}\}$
	be the first $b^m$ points of the coarsely scrambled
	GNS $\mathcal S$, whose generating matrices are $\bm C=(C_1,\ldots,C_s)$. Let 
	$\bm D=(D_1,\ldots,D_s)$ and $\bm M=(M_1,\ldots,M_s)$ be the random shifts and
	random matrices used in the scrambling. Define the random dual net
	\begin{equation*}
	D_{\bm{C},m}^\perp(\bm M)
	:=
	\left\{
	\bm h\in\mathbb N_0^s:
	\sum_{j=1}^s C_j(1:m,1:m)^\top M_j^\top\vec h_j=\bm0
	\right\}.
	\end{equation*}
	Then for every $f\in L^2$, taking expectation with respect to $\bm D$ for a fixed $\bm M$ gives
	\begin{equation*}
	\mathbb E_{\bm D}
	\left[
	\left|
	\frac1{b^m}
	\sum_{n=0}^{b^m-1}f(\widetilde{\bm x}_n)
	-
	I(f)
	\right|^2	
	\right]
	=
	\sum_{\bm0\neq\bm h\in D_{\bm{C},m}^\perp(\bm M)}
	|\widehat f(\bm h)|^2.
	\end{equation*}
\end{lemma}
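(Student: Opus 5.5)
We fix $\bm M$ and treat only the digital shift $\bm D$ as random; the argument is the standard digital-shift computation. Write $\vec y_{n,j}:=M_jC_j\vec n$, so that $\widetilde x_{n,j}=\phi_b(\vec y_{n,j}+D_j)$.

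Two elementary facts about Walsh functions are needed. First, for $h\in\mathbb N_0$ only finitely many digits of $h$ are nonzero, so $\operatorname{wal}_h(\phi_b(\vec z))=\omega_b^{\vec h^\top\vec z}$, with the inner product taken in $\mathbb F_b$ via a fixed additive character. (For $b$ prime, $\iota$ is the identity on $\mathbb Z_b$, so digitwise addition mod $b$ matches $\mathbb F_b$ addition.) Hence
\[
\operatorname{wal}_{h}(\phi_b(\vec y+\vec D))=\operatorname{wal}_h(\phi_b(\vec y))\operatorname{wal}_h(\phi_b(\vec D)).
\]
One subtlety: $\vec y+\vec D$ might end in all digits $b-1$. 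Since $D_j$ has i.i.d.\ uniform digits, this happens with probability zero, and it can be ignored.

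Second, the point $\phi_b(D_j)$ is uniform on $[0,1)$. Therefore $\mathbb E_{\bm D}[\operatorname{wal}_{\bm h}(\phi_b(\bm D))\overline{\operatorname{wal}_{\bm h'}(\phi_b(\bm D))}]=\delta_{\bm h,\bm h'}$ by orthonormality.

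With these facts, we expand $f$ in its Walsh series~\eqref{eqn:Walshexpansion}. The estimator error becomes
\[
\sum_{\bm h\ne\bm 0}\widehat f(\bm h)\,\operatorname{wal}_{\bm h}(\phi_b(\bm D))\,S_{\bm h},
\qquad
S_{\bm h}:=\frac1{b^m}\sum_{n<b^m}\prod_j\omega_b^{\vec h_j^\top M_jC_j\vec n}.
\]
Only the first $m$ digits of $\vec n$ are nonzero for $n<b^m$. So the exponent is $\big(\sum_j C_j(1{:}m,1{:}m)^\top M_j^\top\vec h_j\big)^\top\vec n_{1:m}$, where, strictly, we use the first $m$ columns and all rows of $M_jC_j$. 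I read the paper's notation this way, treating $M_j^\top\vec h_j$ as a finite vector truncated appropriately.

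As $\vec n_{1:m}$ ranges over $\mathbb F_b^m$, the character sum gives $S_{\bm h}=1$ if $\bm h\in D^\perp_{\bm C,m}(\bm M)$ and $S_{\bm h}=0$ otherwise. Taking $\mathbb E_{\bm D}|\cdot|^2$ and using the orthonormality fact kills the cross terms, leaving $\sum_{\bm 0\ne\bm h\in D^\perp}|\widehat f(\bm h)|^2$.

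The main obstacle is justifying the interchange of the infinite Walsh sum with the finite point sum and the expectation, since $f\in L^2$ only has an $L^2$-convergent expansion, not a pointwise one. I would handle this as follows. For fixed $\bm M$ and each $n$, the map $\bm D\mapsto\widetilde{\bm x}_n$ is measure-preserving onto $[0,1)^s$. Hence $f_K(\widetilde{\bm x}_n)\to f(\widetilde{\bm x}_n)$ in $L^2(\bm D)$ for partial sums $f_K$, and the same holds for the finite average. We therefore prove the identity for trigonometric (Walsh) polynomials $f_K$ and pass to the limit. The left side converges because the estimator is continuous in $L^2(\bm D)$, and the right side converges by Bessel/Parseval monotonicity.
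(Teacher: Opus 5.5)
Your proposal is correct and follows essentially the same route as the paper. Both arguments prove the identity for Walsh polynomials first, using the character property (giving the indicator of $D_{\bm C,m}^\perp(\bm M)$) and the orthonormality of $\operatorname{wal}_{\bm h}(\phi_b(\bm D))$ in $L^2(\bm D)$. Both then pass to general $f\in L^2$ using that each $\widetilde{\bm x}_n$ is uniform in $\bm D$, so the estimator error is $L^2(\bm D)$-continuous in $f$, together with Parseval/Bessel on the right-hand side.

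Two of your remarks address details the paper leaves implicit, and both are harmless refinements. One is the null event where $M_jC_j\vec n+D_j$ ends in all digits $b-1$. The other is reading $C_j(1{:}m,1{:}m)$ as the first $m$ columns, with all rows, of $M_jC_j$.
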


The proof is given in Appendix~\ref{app:proof-walsh-error-scrambled-net}. We note that similar results have been established in the literature (see, for instance, \cite{dick:2011,goda2026quasi,pan2024superpolynomial}). However, our proof does not require pointwise convergence of the Walsh expansion \eqref{eqn:Walshexpansion} and thus also applies to discontinuous integrands.

\subsection{Haar wavelet spaces}\label{subsec:haar-space}

We briefly recall the construction of the $b$-adic Haar wavelet system and the associated Haar wavelet spaces; see, e.g., \cite{gnewuch2026qmc} for a comprehensive introduction.

Let $\varphi(x)$ be the indicator $\mathbf1\{0\le x<1\}.$ Fix an integer base $b\ge2$ and define
\[
\Delta_{-1}:=\{0\},
\qquad
\Delta_\ell:=\{0,\ldots,b^\ell-1\},
\quad
\ell\in\mathbb N_0.
\]
For $\ell\in\mathbb N_0$ and $k\in\Delta_\ell$, define
\[
\varphi_k^\ell(x)
=
b^{\ell/2}\varphi(b^\ell x-k),
\]
and let
\[
V^\ell
=
\operatorname{span}
\{\varphi_k^\ell:k\in\Delta_\ell\}.
\]
Then
\[
V^0\subseteq V^\varphi(b^\ell x-k)1\subseteq\cdots\subseteq L^2.
\]
Denote by $ P_\ell:L^2\rightarrow V^\ell$ the orthogonal projection onto $V^\ell$, and adopt the convention $P_{-1}=0. $
For $\ell\ge1$, $i\in \{0,\ldots,b-1\}$, and $k\in\Delta_{\ell-1}$, define
\begin{equation}\label{formula_Haar}
	\psi_{i,k}^\ell(x)=b^{\ell/2}\left(\varphi(b^{\ell} x-bk-i)
	-
	\frac{1}{b}\varphi(b^{\ell-1} x-k)\right)
    .
\end{equation}
Furthermore, let
\[
\psi_{0,0}^0:=\varphi,
\qquad
\nabla_\ell:=
\begin{cases}
	\{0\}, & \ell=0,\\
	\{0,\ldots,b-1\}, & \ell\ge1.
\end{cases}
\]

The $b$-adic Haar wavelet system satisfies the reconstruction formula
\[
(P_\ell-P_{\ell-1})f
=
\sum_{i\in\nabla_\ell}
\sum_{k\in\Delta_{\ell-1}}
\langle
f,\psi_{i,k}^\ell
\rangle
\psi_{i,k}^\ell,
\qquad
\ell\in\mathbb N_0,
\]
and the Parseval identity
\[
\|(P_\ell-P_{\ell-1})f\|_{L^2}^2
=
\sum_{k\in\Delta_{\ell-1}}
\sum_{i\in\nabla_\ell}
\left|
\langle
f,\psi_{i,k}^\ell
\rangle
\right|^2.
\]

We now pass to the $s$-variate tensor-product notation. For
$\bm{\ell}=(\ell_1,\ldots,\ell_s)\in\mathbb N_0^s$, define the product set
\[
\Delta_{\bm{\ell}-\bm1}
:=
\Delta_{\ell_1-1} \times \cdots \times \Delta_{\ell_s-1},
\qquad
\nabla_{\bm{\ell}}
=
\nabla_{\ell_1} \times \cdots \times \nabla_{\ell_s},
\]
and
\[
\psi_{\bm i,\bm k}^{\bm{\ell}}(\bm x)
=
\prod_{j=1}^s
\psi_{i_j,k_j}^{\ell_j}(x_j).
\]

Furthermore, define the Haar detail projection
\[
Q_{\bm{\ell}}
:=
\bigotimes_{j=1}^s
(P_{\ell_j}-P_{\ell_j-1}).
\]
Then
\begin{equation}\label{formula_Q}
	Q_{\bm{\ell}}f
	=
	\sum_{\bm k\in\Delta_{\bm{\ell}-\bm1}}
	\sum_{\bm i\in\nabla_{\bm{\ell}}}
	\left\langle
	f,
	\psi_{\bm i,\bm k}^{\bm{\ell}}
	\right\rangle
	\psi_{\bm i,\bm k}^{\bm{\ell}}.
\end{equation}

We now define the Haar wavelet spaces needed for the subsequent analysis.
\begin{definition}[Haar wavelet space \cite{gnewuch2026qmc}]\label{def:haar-wavelet-space}
	Let $ p,q\in [1,\infty)$ and $\alpha\in\mathbb R$. For
	$f\in L^1$, define
	\begin{equation}\label{eq:haar-wavelet-norm}
		\|f\|_{\mathrm{wav},\alpha,s,p,q}
		:=
		\left[\sum_{\bm{\ell}\in\mathbb N_0^s}
		b^{q(\alpha-1/p+1/2)|\bm{\ell}|_1}
		\left(
		\sum_{\bm k\in\Delta_{\bm{\ell}-\bm 1}}
		\sum_{\bm i\in\nabla_{\bm{\ell}}}
		\left|
		\left\langle f,\psi_{\bm i,\bm k}^{\bm{\ell}}\right\rangle
		\right|^p
		\right)^{q/p}\right]^{1/q}.
	\end{equation}
		The Haar wavelet space is defined by
	\[
	\mathcal H_{\mathrm{wav},\alpha,s,p,q}
	:=
	\left\{
	f\in L^1:
	\|f\|_{\mathrm{wav},\alpha,s,p,q}<\infty
	\right\}.
	\]
\end{definition}
The above definition extends to $p=\infty$ or $q=\infty$ with slight modification; see \cite[Definition 1]{gnewuch2026qmc}. As shown in \cite[Section 2.2]{gnewuch2026qmc},
\begin{equation}\label{eqn:Hnormpq}
\|f\|_{\mathrm{wav},\alpha,s,p,q}\leq \|f\|_{\mathrm{wav},\alpha,s,p',q'} \quad \text{if } p\leq p' \text{ and } q\geq q'.    
\end{equation}

We collect the following two lemmas for later use. Their proofs are given in Appendices~\ref{app:proof-lem:HinL2} and~\ref{app:proof-HaarWalshBlock}, respectively.
\begin{lemma}\label{lem:HinL2}
For $f\in \mathcal H_{\mathrm{wav},\alpha,s,p,q}$, define $\alpha_p:=\alpha-\left(1/p-1/2\right)_+$, where $(x)_+$ denotes $\max(x,0)$.
Then 
\begin{equation}\label{Walsh_bound}
\|f\|_{\mathrm{wav},\alpha_p,s,2,q}^{q}=\sum_{\bm{\ell}\in\mathbb N_0^s}
	b^{q\alpha_p|\bm{\ell}|_1}
	\|Q_{\bm{\ell}}f\|_{L^2}^q
	\le
	\|f\|_{\mathrm{wav},\alpha,s,p,q}^{q}.
\end{equation}
    If further $\alpha_p>0$, there is a constant $C_{b,\alpha_p,q}$ depending on $b,\alpha_p$, and $q$, such that
    \begin{equation}\label{eqn:L2embedding}
      \|f\|_{L^2}\leq C^s_{b,\alpha,p,q} \|f\|_{\mathrm{wav},\alpha,s,p,q}.  
    \end{equation}
\end{lemma}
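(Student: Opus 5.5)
The plan is to prove the two parts of Lemma~\ref{lem:HinL2} separately, starting from the Parseval identity for the Haar system.

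\textbf{Step 1: the equality in \eqref{Walsh_bound}.} By \eqref{formula_Q} and orthonormality of $\{\psi_{\bm i,\bm k}^{\bm\ell}\}$ for fixed $\bm\ell$ (tensorize the one-dimensional Parseval identity), we have
\[
\|Q_{\bm\ell}f\|_{L^2}^2=\sum_{\bm k,\bm i}|\langle f,\psi^{\bm\ell}_{\bm i,\bm k}\rangle|^2 .
\]
Putting $p=2$ in \eqref{eq:haar-wavelet-norm} makes the exponent $\alpha_p-1/2+1/2=\alpha_p$, so the equality holds by definition. This requires $f\in L^1$, since the inner products with bounded Haar functions are then well defined.

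\textbf{Step 2: the inequality in \eqref{Walsh_bound}.} It suffices to work level by level, i.e.\ to show for each $\bm\ell$ that
\[
b^{\alpha_p|\bm\ell|_1}\Bigl(\sum|c|^2\Bigr)^{1/2}\le b^{(\alpha-1/p+1/2)|\bm\ell|_1}\Bigl(\sum|c|^p\Bigr)^{1/p}.
\]
For fixed $\bm\ell$ the index set has cardinality $n_{\bm\ell}=\prod_j|\Delta_{\ell_j-1}||\nabla_{\ell_j}|=b^{|\bm\ell|_1}$, since $|\Delta_{\ell-1}||\nabla_\ell|=b^{\ell-1}\cdot b$ for $\ell\ge1$ and $1$ for $\ell=0$. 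We split into two cases.
\begin{itemize}
\item If $p\le2$, then $\|c\|_2\le\|c\|_p$, and $\alpha_p=\alpha-1/p+1/2$, so the inequality is immediate.
\item If $p>2$, Hölder gives $\|c\|_2\le n_{\bm\ell}^{1/2-1/p}\|c\|_p=b^{(1/2-1/p)|\bm\ell|_1}\|c\|_p$, and $\alpha_p=\alpha$, so $b^{\alpha|\bm\ell|_1}\|c\|_2\le b^{(\alpha+1/2-1/p)|\bm\ell|_1}\|c\|_p$.
\end{itemize}
Raising to the power $q$ and summing over $\bm\ell$ gives \eqref{Walsh_bound}.

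\textbf{Step 3: the embedding \eqref{eqn:L2embedding}.} For $f\in\mathcal H_{\mathrm{wav},\alpha,s,p,q}$, the right-hand side of \eqref{Walsh_bound} is finite. Since each $\|Q_{\bm\ell}f\|_{L^2}$ is finite, Steps 1--2 together with the argument below show that $\sum_{\bm\ell}Q_{\bm\ell}f$ converges in $L^2$. We identify the limit with $f$ by noting that the tensor Haar system is complete in $L^2$ and that inner products with each $\psi$ agree. One clean way to do this is to check that the $L^2$-limit $g$ and $f$ have the same integrals over all $b$-adic boxes, hence coincide as $L^1$ functions, because the $Q$-decomposition of the projection onto box-indicators is exact.

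By orthogonality of the $Q_{\bm\ell}$ and Hölder with exponents $q/2$ and $(q/2)'$ when $q\ge2$ (respectively $\ell^q\subseteq\ell^2$ when $q\le2$), we get
\[
\|f\|_{L^2}^2=\sum_{\bm\ell}\|Q_{\bm\ell}f\|_{L^2}^2
\le\Bigl(\sum_{\bm\ell}b^{q\alpha_p|\bm\ell|_1}\|Q_{\bm\ell}f\|_{L^2}^q\Bigr)^{2/q}\Bigl(\sum_{\bm\ell}b^{-t\alpha_p|\bm\ell|_1}\Bigr)^{2/t},
\]
where $t$ is the conjugate exponent, with $1/t=1/2-1/q$ when $q>2$. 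The last sum factorizes as $\bigl(\sum_{\ell\ge0}b^{-t\alpha_p\ell}\bigr)^s=(1-b^{-t\alpha_p})^{-s}$, which is finite since $\alpha_p>0$. For $q\le2$ the constant is $1$. Combined with \eqref{Walsh_bound}, this gives \eqref{eqn:L2embedding} with
\[
C_{b,\alpha,p,q}=(1-b^{-t\alpha_p})^{-1/t}.
\]

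\textbf{Main obstacle.} The hardest step is justifying $\|f\|_{L^2}^2=\sum_{\bm\ell}\|Q_{\bm\ell}f\|^2$ when $f$ is a priori only in $L^1$. I will handle this through the $L^1$-identification by box averages, using that $\sum_{|\bm\ell|_\infty\le L}Q_{\bm\ell}f=\bigotimes_j P_L f$ is the conditional expectation onto level-$L$ boxes. This conditional expectation converges to $f$ in $L^1$ and is also Cauchy in $L^2$, so the $L^1$ and $L^2$ limits agree and $f\in L^2$.
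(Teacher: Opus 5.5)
Your proof is correct and follows essentially the same route as the paper. The overlap covers the $\ell^p\subseteq\ell^2$ comparison for $p\le2$ and the level-wise H\"older bound with $b^{|\bm\ell|_1}$ coefficients for $p>2$, which is what the paper's appeal to \eqref{eqn:Hnormpq} encodes. It also covers the same H\"older step with exponent $q/(q-2)$, which yields the identical constant. Your additional argument that $f\in L^2$, via the box averages $\bigotimes_j P_L f$ that converge in $L^1$ and are Cauchy in $L^2$, fills a gap the paper leaves implicit. The paper simply writes $\|f\|_{L^2}^2=\sum_{\bm\ell}\|Q_{\bm\ell}f\|_{L^2}^2$ for an $f$ that is a priori only in $L^1$.
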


\begin{lemma}\label{lem:HaarWalshBlock}
	For $\bm{\ell}\in\mathbb N_0^s$, define the Walsh block
	 $\Lambda_{\bm{\ell}}
	:=
	\{\bm h\in\mathbb N_0^s:
	\Vec{\bm{\mu}}_1(\bm{h})=\bm{\ell}\}$
	. Then
	\[
	Q_{\bm{\ell}}f(\bm x)
	=
	\sum_{\bm h\in \Lambda_{\bm{\ell}}}
	\widehat f(\bm h)
	\operatorname{wal}_{\bm h}(\bm x).
	\]
	Consequently,
	\[
	\sum_{\bm h\in \Lambda_{\bm{\ell}}}
	|\widehat f(\bm h)|^2
	=
	\|Q_{\bm{\ell}}f\|_{L^2}^2
	=
	\sum_{\bm k\in\Delta_{\bm{\ell}-\bm1}}
	\sum_{\bm i\in\nabla_{\bm{\ell}}}
	\left|
	\left\langle
	f,
	\psi_{\bm i,\bm k}^{\bm{\ell}}
	\right\rangle
	\right|^2 .
	\]
\end{lemma}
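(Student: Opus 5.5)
The statement to prove is Lemma~\ref{lem:HaarWalshBlock}. By the tensor structure, both $Q_{\bm\ell}$ and the Walsh system factor over coordinates, so I would first prove a one-dimensional identity and then tensorize. In one dimension the claim is that $P_\ell-P_{\ell-1}$ is the orthogonal projection onto $\overline{\operatorname{span}}\{\operatorname{wal}_h: \mu_1(h)=\ell\}$. For $\ell=0$ this set is $\{h=0\}$, and for $\ell\ge1$ it is $\{b^{\ell-1}\le h<b^\ell\}$.

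\textbf{Step 1 (one dimension).} The key fact is that $V^\ell=\operatorname{span}\{\operatorname{wal}_h:0\le h<b^\ell\}$. For $h<b^\ell$, $\operatorname{wal}_h(x)$ depends only on the first $\ell$ digits of $x$, so it is constant on each interval $[k b^{-\ell},(k+1)b^{-\ell})$ and hence lies in $V^\ell$. These $b^\ell$ functions are orthonormal, so they are linearly independent. Since $\dim V^\ell=b^\ell$, they span $V^\ell$. Consequently $P_\ell f=\sum_{h<b^\ell}\widehat f(h)\operatorname{wal}_h$, which also holds for $\ell=-1$ with the empty sum. Subtracting,
\[
(P_\ell-P_{\ell-1})f=\sum_{b^{\ell-1}\le h<b^\ell}\widehat f(h)\operatorname{wal}_h \quad(\ell\ge1),
\]
and for $\ell=0$ the right-hand side is $\widehat f(0)$. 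In both cases the index set is exactly $\{h:\mu_1(h)=\ell\}$.

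\textbf{Step 2 (tensorization).} Let $R_\ell$ denote the orthogonal projection onto the span of the one-dimensional Walsh functions $\operatorname{wal}_h$ with $\mu_1(h)=\ell$; Step~1 shows $P_\ell-P_{\ell-1}=R_\ell$. Then $Q_{\bm\ell}=\bigotimes_j R_{\ell_j}$ on $L^2((0,1)^s)$, the Hilbert tensor product of the one-dimensional $L^2$ spaces. Apply this operator to a tensor Walsh function $\operatorname{wal}_{\bm h}=\prod_j\operatorname{wal}_{h_j}$: it returns $\operatorname{wal}_{\bm h}$ if $\mu_1(h_j)=\ell_j$ for all $j$, and $0$ otherwise. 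The $\operatorname{wal}_{\bm h}$ form a complete orthonormal basis and $Q_{\bm\ell}$ is bounded, so by continuity, for $f=\sum_{\bm h}\widehat f(\bm h)\operatorname{wal}_{\bm h}$ in $L^2$ we get $Q_{\bm\ell}f=\sum_{\bm h\in\Lambda_{\bm\ell}}\widehat f(\bm h)\operatorname{wal}_{\bm h}$.

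If $f$ is only in $L^1$, I would interpret the statement as follows. Since $\Lambda_{\bm\ell}$ is finite and all functions involved are bounded step functions, $Q_{\bm\ell}f$ is defined through the finitely many inner products $\langle f,\psi^{\bm\ell}_{\bm i,\bm k}\rangle$ in \eqref{formula_Q}. The identity is then a finite linear-algebra identity between two bases of the same finite-dimensional space $\bigotimes_j(V^{\ell_j}\ominus V^{\ell_j-1})$ of step functions. It therefore extends from $L^2$ to $L^1$ by density and continuity of the coefficient functionals.

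\textbf{Step 3 (the norm identities).} The first equality follows from Parseval for the orthonormal family $\{\operatorname{wal}_{\bm h}\}_{\bm h\in\Lambda_{\bm\ell}}$. For the second, I need the functions $\psi^{\bm\ell}_{\bm i,\bm k}$ to reproduce $Q_{\bm\ell}$ via \eqref{formula_Q} with the stated Parseval identity; I take this from the recalled one-dimensional reconstruction and Parseval identities, tensorized. This gives $\|Q_{\bm\ell}f\|_{L^2}^2=\sum_{\bm k,\bm i}|\langle f,\psi^{\bm\ell}_{\bm i,\bm k}\rangle|^2$.

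The main obstacle is this last point. The $b$ functions $\psi^\ell_{i,k}$, $i=0,\dots,b-1$, in \eqref{formula_Haar} sum to zero, so they are not orthonormal. The Parseval identity must therefore be checked as a tight-frame property on the $(b-1)$-dimensional space they span. I would verify it directly: for fixed $k$, the Gram matrix of $\{\psi^\ell_{i,k}\}_i$ is $I-\frac1b\bsone\bsone^\top$, which is the orthogonal projection onto $\bsone^\perp$. It follows that $\sum_i\langle g,\psi_{i,k}\rangle\psi_{i,k}$ is the orthogonal projection of $g$ onto their span and that the squared coefficients sum to $\|\cdot\|^2$. The Parseval identity is stated earlier in the paper, but I would include this check for safety.
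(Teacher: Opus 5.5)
Your proposal is correct and follows essentially the paper's argument: identify the one-dimensional detail space with $\operatorname{span}\{\operatorname{wal}_h: b^{\ell-1}\le h<b^\ell\}$ using piecewise constancy of $\operatorname{wal}_h$ on level-$\ell$ intervals and a dimension count, then tensorize. The paper checks $\operatorname{wal}_h\in\operatorname{ran}P_\ell\cap\ker P_{\ell-1}$ directly, whereas you identify $V^\ell=\operatorname{span}\{\operatorname{wal}_h:h<b^\ell\}$ and subtract, which is an equivalent variant; your Gram-matrix check that the non-orthonormal $\psi^\ell_{i,k}$ still satisfy the stated Parseval identity, and your $L^1$ extension remark, are sound additions to what the paper takes for granted.
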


\section{Main results}\label{sec:main}

In this section, we introduce the median estimator based on the reduced GNS and establish its convergence in various function spaces.

\begin{definition}[Reduced GNS]\label{def:reducedGNS}
Given a positive sequence $a(n)$ starting from $n=2$, 
	we say that a GNS is
	$a(n)$-reduced if, for each degree $n\ge2$, at most
	$\lfloor (b^n-1)/(na(n))\rfloor$ monic irreducible polynomials of
	degree $n$ are used as base polynomials. 
\end{definition}

This is in contrast to the full Niederreiter sequence \cite[Definition 2.5]{suzuki2025coarse}, where all monic irreducible polynomials of each degree are used.

Let $\{\widetilde{\bm x}_0,\ldots,\widetilde{\bm x}_{b^m-1}\}$ be the first $b^m$ points from the coarsely scrambled GNS $\mathcal S$. We write the corresponding RQMC estimator as
\begin{equation*}
{\mathcal I}_{m,\mathcal S}(f)
:=
\frac1{b^m}
\sum_{n=0}^{b^m-1}
f(\widetilde{\bm x}_n).
\end{equation*}
Because each $\widetilde{\bm x}_n$ is uniform on $(0,1)^s$, ${\mathcal I}_{m,\mathcal S}(f)$ is unbiased for $I(f)$.

Let $R$ be an odd natural number. For $i\in 1{:}R$, let ${\mathcal I}_{m,\mathcal S}^{(i)}(f)$ be independent realizations of ${\mathcal I}_{m,\mathcal S}(f)$.
The median RQMC estimator is defined by
\begin{equation*}
{\mathcal M}_{m,R,\mathcal S}(f)
:=
\underset{1\leq i\leq R}{\mathrm{median}} \ {\mathcal I}_{m,\mathcal S}^{(i)}(f).
\end{equation*}

We shall repeatedly use the following standard median amplification bound, commonly known as the median trick \cite{kunsch2019solvable,niemiro2009fixed}.

\begin{lemma}\label{lem:median-amplification}
	Let $R\in \natu$ be odd, and let $X_1,\dots,X_R$ be independent copies of a random variable $X$. If for $\mu\in\mathbb R$ and $\varepsilon,\eta>0$,
	\begin{equation*}
	\mathbb P\{|X-\mu|>\varepsilon\}\le\eta,
	\end{equation*}
	then we have
	\begin{equation*}
	\mathbb P\left\{
	\left|\underset{1\le i\le R}{\operatorname{median}}X_i-\mu\right|
	>\varepsilon
	\right\}
	\le
	2^R\eta^{\frac{R+1}{2}}.
	\end{equation*}
\end{lemma}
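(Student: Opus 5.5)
The median exceeds $\mu+\varepsilon$ or falls below $\mu-\varepsilon$ only if at least $(R+1)/2$ of the samples do. So the event $\{|\mathrm{median}_i X_i-\mu|>\varepsilon\}$ is contained in the event that at least $(R+1)/2$ of the $X_i$ satisfy $|X_i-\mu|>\varepsilon$. The proof therefore reduces to a binomial tail estimate.

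\textbf{Step 1: reduction to bad indices.} Call index $i$ \emph{bad} if $|X_i-\mu|>\varepsilon$. Suppose fewer than $(R+1)/2$ indices are bad. Then at least $(R+1)/2$ of the $X_i$ lie in $[\mu-\varepsilon,\mu+\varepsilon]$. Since $R$ is odd, the median is the $((R+1)/2)$-th order statistic.
\begin{itemize}
\item If the median were $>\mu+\varepsilon$, the $(R+1)/2$ largest values would all exceed $\mu+\varepsilon$. At most $(R-1)/2$ values would remain to lie in the interval, a contradiction.
\item The case where the median is $<\mu-\varepsilon$ is symmetric.
\end{itemize}
Hence the event in question implies that at least $(R+1)/2$ indices are bad.

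\textbf{Step 2: union bound over subsets.} By independence, for each subset $S\subseteq 1{:}R$ of size $k$, the probability that all indices in $S$ are bad is at most $\eta^{k}$. It suffices to take $k=(R+1)/2$: if at least $(R+1)/2$ indices are bad, some subset of exactly that size consists of bad indices. A union bound over the at most $\binom{R}{(R+1)/2}\le 2^R$ such subsets gives
\[
\mathbb P\Bigl\{\bigl|\operatorname{median}_{1\le i\le R} X_i-\mu\bigr|>\varepsilon\Bigr\}\le 2^R\eta^{(R+1)/2}.
\]

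There is no real obstacle. The only point needing care is the order-statistic argument in Step 1, specifically that oddness of $R$ makes the median a single sample. If $\eta\ge 1/2$ the bound exceeds $1$ and is trivial anyway.
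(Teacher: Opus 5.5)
Your proof is correct and follows the same route as the paper: you show that the median can deviate by more than $\varepsilon$ only if at least $(R+1)/2$ of the independent events $\{|X_i-\mu|>\varepsilon\}$ occur, and then you bound that binomial tail. The only difference is cosmetic. You union-bound over subsets of size exactly $(R+1)/2$, which gives $\binom{R}{(R+1)/2}\eta^{(R+1)/2}$, whereas the paper sums $\sum_{k\ge(R+1)/2}\binom{R}{k}\eta^k$ and uses $\sum_k\binom{R}{k}=2^R$. Your version has the small advantage of not implicitly needing $\eta\le 1$ in the last inequality.
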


\begin{proof}
	If the median differs from $\mu$ by more than $\varepsilon$, then at least
	$(R+1)/2$ of the events $\{|X_i-\mu|>\varepsilon\}$ occur. Independence
	and the binomial tail bound give
	\begin{equation*}
	\mathbb P\left\{
	\left|\underset{1\le i\le R}{\operatorname{median}}X_i-\mu\right|
	>\varepsilon
	\right\}
	\le
	\sum_{k=(R+1)/2}^{R}\binom Rk\eta^k
	\le
	2^R\eta^{\frac{R+1}{2}},
	\end{equation*}
	where the last inequality follows from
	$\sum_{k=0}^{R}\binom Rk=2^R$.
\end{proof}

In what follows, all constants may depend on $b$, but we suppress this dependence in the notation for simplicity.

\subsection{Error bounds for $L^2$ integrands}\label{subsec:L2} 
We begin with the notion of the maximal gain coefficient. For \(f\in L^2\), let
$$\sigma^2_{MC}(f):=\int_{(0,1)^s}f(\bm x)^2\,d\bm x-I(f)^2.$$
The maximal gain coefficient of the RQMC estimator \(\mathcal I_{m,\mathcal S}(f)\) is defined as
\begin{equation}\label{eqn:maximalgain}
    \Gamma({\mathcal I}_{m,\mathcal S}):=\sup_{\substack{f\in L^2\\\sigma^2_{MC}(f)\neq 0 }} \frac{\operatorname{Var}\left({\mathcal I}_{m,\mathcal S}(f)\right)}{b^{-m}\sigma^2_{MC}(f)}.
\end{equation}
The terminology is motivated by the fact that \(\operatorname{Var}(\mathcal I_{m,\mathcal S}(f))\) can be expressed as a weighted sum of the variance components of \(f\) in the Haar or Walsh basis, and \(\Gamma(\mathcal I_{m,\mathcal S})\) is precisely the largest multiplier appearing in this sum. We refer the reader to \cite[Section~2.5]{suzuki2025coarse} for further details.

The following lemma shows that $\Gamma({\mathcal I}_{m,\mathcal S})$ is bounded uniformly in  $s$ and $m$ whenever \(\mathcal S\) is a properly reduced GNS.

\begin{lemma}\label{cor:dimension-independent-gain}
	If $\mathcal S$ is a  $(\log n)^{1+\beta}$-reduced GNS with $\beta>0$, then for every $f\in L^2$,
    $$\operatorname{Var}\left({\mathcal I}_{m,\mathcal S}(f)\right)
	\le  \frac{\Gamma({\mathcal I}_{m,\mathcal S})}{b^m}\sigma^2_{MC}(f)
    \le
	 \frac{C_{\beta}}{b^m}\sigma^2_{MC}(f),$$   
where
$$C_\beta=
	\exp\left(
	2+\frac1{2(\log2)^{1+\beta}}
	+\frac1{\beta(\log2)^\beta}
	\right).$$
\end{lemma}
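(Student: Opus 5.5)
The first inequality is immediate from the definition \eqref{eqn:maximalgain} of $\Gamma(\mathcal I_{m,\mathcal S})$. If $\sigma^2_{MC}(f)=0$, then $f$ is a.e.\ constant and the estimator has zero variance. So the content is the bound $\Gamma(\mathcal I_{m,\mathcal S})\le C_\beta$, and the plan is to import Suzuki's gain bound for coarsely scrambled GNS and then sum it over degrees.

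\textbf{Step 1: Suzuki's bound.} From \cite{suzuki2025coarse} (Section~2.5 and the gain-coefficient theorem there) I would take an explicit bound of the form
\[
\Gamma(\mathcal I_{m,\mathcal S})\le \prod_{j=1}^s \Bigl(1+\frac{c}{b^{e_j}-1}\Bigr),
\]
or more likely a bound grouped by degree,
\[
\Gamma(\mathcal I_{m,\mathcal S})\le \prod_{n\ge1}\Bigl(1+\frac{c_n}{b^n-1}\Bigr)^{N_n}.
\]
Here $N_n$ is the number of base polynomials of degree $n$ actually used, and $c_n$ is a modest constant. The heuristic behind this form: each coordinate with a degree-$e_j$ base contributes, after block affine scrambling in base $b^{e_j}$, a gain factor that deviates from $1$ by $O(b^{-e_j})$ per level. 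For the full Niederreiter sequence this product is $O(\log s)$, which is exactly Suzuki's $O(\log s)$ result. If the cited result is stated differently, I would re-derive it. The route would be Lemma~\ref{thm:walsh-error-scrambled-net}, averaging over $\bm M$ to get $\mathbb P(\bm h\in D^\perp_{\bm C,m}(\bm M))$ for each $\bm h\neq\bm 0$, and then the equidistribution property \cite[Theorem~1]{tezuka2013discrepancy} to bound the number of dual-net elements per Walsh block. That gives a per-block gain of at most $\prod_{j\in u}(1+1/(b^{e_j}-1))$-type factors.

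\textbf{Step 2: use the reduction.} Take logarithms and use $\log(1+x)\le x$:
\[
\log\Gamma\le \sum_n \frac{c_n N_n}{b^n-1}.
\]
For degree $n=1$, at most $b$ polynomials are available, so the contribution is a bounded constant. I expect this to be where the ``$2$'' in $C_\beta$ comes from, possibly together with the special degree-$0$/degree-$1$ coordinates used by Sobol'. For $n\ge2$, the definition of an $a(n)$-reduced GNS gives
\[
N_n\le \frac{b^n-1}{n(\log n)^{1+\beta}},
\qquad\text{so}\qquad
\sum_{n\ge2}\frac{N_n}{b^n-1}\le \sum_{n\ge2}\frac{1}{n(\log n)^{1+\beta}}.
\]
Bound this sum by its first term plus an integral:
\[
\sum_{n\ge2}\frac{1}{n(\log n)^{1+\beta}}\le \frac{1}{2(\log 2)^{1+\beta}}+\int_2^\infty\frac{dx}{x(\log x)^{1+\beta}}=\frac{1}{2(\log 2)^{1+\beta}}+\frac{1}{\beta(\log 2)^\beta}.
\]
These are exactly the two terms in the exponent of $C_\beta$. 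Exponentiating gives $\Gamma\le C_\beta$, uniformly in $s$ and $m$.

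\textbf{Main obstacle.} The hard part is Step 1: pinning down the precise form of Suzuki's bound so that the per-degree constant is $c_n=1$ for $n\ge2$ and the low-degree coordinates contribute at most $e^2$. The matching of $C_\beta$ strongly suggests the bound has the form $\exp\bigl(2+\sum_{n\ge2}N_n/(b^n-1)\bigr)$. I would first check whether Suzuki states the gain bound as $\prod_j\bigl(1+1/(b^{e_j}-1)\bigr)$ times a constant. If not, I would prove it via the Walsh-block counting described in Step 1.
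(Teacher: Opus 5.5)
Your proposal is correct and is essentially the paper's proof. The paper cites \cite[Corollary~3.4]{suzuki2025coarse} in exactly your first form with $c=1$, namely $\Gamma(\mathcal I_{m,\mathcal S})\le\prod_{j=1}^s b^{e_j}/(b^{e_j}-1)$, so no re-derivation and no special treatment of Sobol' low-degree coordinates is needed: degree $1$ contributes $b\log\bigl(b/(b-1)\bigr)\le b/(b-1)\le 2$, and degrees $n\ge2$ are handled by the reduction count, $\log(1+x)\le x$, and the same first-term-plus-integral comparison you wrote.
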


\begin{proof}
By \eqref{eqn:maximalgain}, it suffices to show \(\Gamma(\mathcal I_{m,\mathcal S})\leq C_\beta\). From \cite[Corollary~3.4]{suzuki2025coarse},
\begin{equation*}
	\Gamma({\mathcal I}_{m,\mathcal S})
	\le
	\prod_{j=1}^s
	\frac{b^{e_j}}{b^{e_j}-1}.
	\end{equation*}
	Degree \(1\) contributes at most \(b\) base polynomials to the product. For degree \(n\ge2\), the definition of a \((\log n)^{1+\beta}\)-reduced GNS implies that the number of \(e_j\) equal to \(n\) is at most
	$\lfloor (b^n-1)/(n(\log n)^{1+\beta})\rfloor$. Hence
	\begin{equation*}
	\log\prod_{j=1}^s
	\frac{b^{e_j}}{b^{e_j}-1}
	\le
	b\log\left(\frac b{b-1}\right)
	+
	\sum_{n=2}^\infty
	\frac{b^n-1}{n(\log n)^{1+\beta}}
	\log\left(1+\frac1{b^n-1}\right).
	\end{equation*}
	Since $\log(1+x)\le x$ for $x\geq 0$, we obtain
	\begin{align*}
	\log\prod_{j=1}^s
	\frac{b^{e_j}}{b^{e_j}-1}
	&\le
	\frac{b}{b-1}+
	\sum_{n=2}^\infty
	\frac1{n(\log n)^{1+\beta}}\le
	2+\frac1{2(\log2)^{1+\beta}}
	+
	\int_2^\infty\frac{dx}{x(\log x)^{1+\beta}}\\
	&\le
	2+\frac1{2(\log2)^{1+\beta}}
	+\frac1{\beta(\log2)^\beta}.
	\end{align*}
	Exponentiating both sides proves the claim.
\end{proof}

To bound the RMSE of the median estimator \(\mathcal M_{m,R,\mathcal S}(f)\), we need the following elementary inequality, which controls the variance of a sample median in terms of the variance of the underlying random variable.

\begin{lemma}\label{lem:medvar}
Let $R\in \natu$, and let $X_1,\dots,X_R$ be independent copies of a random variable $X$ with $\mathbb{E} X^2<\infty$. Then
    $$\mathbb{E}\left[\left(\underset{1\leq i\leq R}{\mathrm{median}}  \ X_i-\mathbb{E}X\right)^2\right]\leq 2 \operatorname{Var}(X).$$
\end{lemma}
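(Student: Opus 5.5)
The plan is a pathwise inequality: I will bound $(M-\mu)^2$, where $M:=\mathrm{median}_{1\le i\le R}X_i$ and $\mu:=\mathbb E X$, by an average of the squared deviations $(X_i-\mu)^2$. Taking expectations then gives the claim directly, with no distributional assumption beyond $\mathbb E X^2<\infty$. The constant $2$ comes from the fact that at least half of the sample lies on the far side of the median.

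\textbf{Step 1 (counting).} I first record a counting fact valid for every realization. Whatever convention is used for the median (the middle order statistic when $R$ is odd, or any point between the two middle order statistics, e.g.\ their average, when $R$ is even), the set $\{i: X_i\ge M\}$ has at least $\lceil R/2\rceil\ge R/2$ elements. The same holds for $\{i: X_i\le M\}$. For $R$ even and $M$ the average of $X_{(R/2)}$ and $X_{(R/2+1)}$, this holds because the upper $R/2$ order statistics are all at least $X_{(R/2+1)}\ge M$, and symmetrically for the lower ones.

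\textbf{Step 2 (pathwise bound).} Suppose $M\ge\mu$. For every $i$ with $X_i\ge M$ we have $X_i-\mu\ge M-\mu\ge0$, hence $(X_i-\mu)^2\ge (M-\mu)^2$. Let $k:=|\{i:X_i\ge M\}|$. Averaging over these indices and then enlarging the sum to all indices gives
\[
(M-\mu)^2\le \frac1k\sum_{i:X_i\ge M}(X_i-\mu)^2\le \frac2R\sum_{i=1}^R (X_i-\mu)^2 .
\]
The case $M<\mu$ is symmetric: use the indices with $X_i\le M$, for which $\mu-X_i\ge\mu-M>0$. Thus the same inequality holds almost surely in both cases.

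\textbf{Step 3 (expectation).} Taking expectations and using that the $X_i$ are copies of $X$ yields
\[
\mathbb E[(M-\mu)^2]\le \frac2R\cdot R\operatorname{Var}(X)=2\operatorname{Var}(X).
\]
Independence is not even needed here; only identical distribution is used.

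There is no real obstacle. The only point requiring care is Step 1 for even $R$, where the median convention must place $M$ between the two middle order statistics; I would state this convention explicitly.
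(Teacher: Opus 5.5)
Your proof is correct and is essentially the paper's argument: both establish the pathwise bound $(M-\mu)^2\le \frac{2}{R}\sum_{i=1}^R (X_i-\mu)^2$ from the fact that at least $R/2$ sample points lie on the far side of the median, and then take expectations. The paper first passes to $\mathrm{median}_i\,(X_i-\mu)^2$ via Jensen's inequality for medians \cite{Merkle2005} and counts on the squares. Your split on the sign of $M-\mu$ does the counting directly on the $X_i$, which removes that citation and makes the even-$R$ median convention explicit.
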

\begin{proof}
    Assume without loss of generality that \(\mathbb E X=0\). Since \(x\mapsto x^2\) is convex, Jensen's inequality for medians \cite{Merkle2005} gives
     $$\left(\underset{1\leq i\leq R}{\mathrm{median}}  \ X_i\right)^2\leq \underset{1\leq i\leq R}{\mathrm{median}}  \ X^2_i.$$
     Among $X^2_1,\dots,X^2_R$, at least $(R+1)/2$ of them are no smaller than their median if $R$ is odd, and $R/2$ of them if $R$ is even.
    Therefore,
     $$\underset{1\leq i\leq R}{\mathrm{median}}  \ X^2_i\leq \frac{1}{R/2}\sum_{i=1}^R X^2_i.$$
     Taking expectation yields 
     \begin{align*}
          \mathbb{E}\left[\left(\underset{1\leq i\leq R}{\mathrm{median}}  \ X_i\right)^2\right]\leq \mathbb{E}\left[\left(\underset{1\leq i\leq R}{\mathrm{median}}  \ X^2_i\right)\right]\leq \frac{1}{R/2}\sum_{i=1}^R \mathbb{E}X^2_i=2 \mathbb{E}X^2. 
     \end{align*}
\end{proof}

Applying Lemma~\ref{lem:medvar} to $X={\mathcal I}_{m,\mathcal S}(f)$ yields the following bound on ${\mathcal M}_{m,R,\mathcal S}(f)$.

\begin{theorem}\label{thm:l2-median-baseline}
	Under the assumptions of Lemma~\ref{cor:dimension-independent-gain},
	\begin{equation*}
	\mathbb E\left[
	\left|{\mathcal M}_{m,R,\mathcal S}(f)-I(f)\right|^2
	\right]
	\le
	\frac{2C_{\beta}}{b^m} \sigma^2_{MC}(f).
	\end{equation*}
\end{theorem}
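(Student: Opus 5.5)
The plan is to combine Lemma~\ref{lem:medvar} with Lemma~\ref{cor:dimension-independent-gain}. Set $X:={\mathcal I}_{m,\mathcal S}(f)$. The independent realizations ${\mathcal I}_{m,\mathcal S}^{(i)}(f)$, $i\in1{:}R$, are then independent copies of $X$, and ${\mathcal M}_{m,R,\mathcal S}(f)$ is exactly their sample median.

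First, I will check the hypotheses of Lemma~\ref{lem:medvar}. Since $f\in L^2$ and each $\widetilde{\bm x}_n$ is uniform on $(0,1)^s$, Minkowski's inequality gives $\|X\|_{L^2(\mathbb P)}\le \|f\|_{L^2}<\infty$. Hence $\mathbb E X^2<\infty$.

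Second, I will identify $\mathbb E X$. The estimator is unbiased, as noted after the definition of ${\mathcal I}_{m,\mathcal S}$, because each coarsely scrambled point is uniform. Therefore $\mathbb E X=I(f)$.

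Third, Lemma~\ref{lem:medvar} gives
\begin{equation*}
\mathbb E\left[\left|{\mathcal M}_{m,R,\mathcal S}(f)-I(f)\right|^2\right]\le 2\operatorname{Var}(X).
\end{equation*}
Lemma~\ref{cor:dimension-independent-gain} then bounds $\operatorname{Var}(X)\le C_\beta b^{-m}\sigma^2_{MC}(f)$, and combining the two inequalities yields the claim.

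No step poses a real obstacle. The only points needing care are the finiteness of the second moment and the unbiasedness, which fix the centering at $I(f)$; both follow directly from the uniformity of the scrambled points.
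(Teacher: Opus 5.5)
Your proposal is correct and is the same argument the paper uses. The paper simply applies Lemma~\ref{lem:medvar} to $X=\mathcal I_{m,\mathcal S}(f)$ and bounds $\operatorname{Var}(X)$ via Lemma~\ref{cor:dimension-independent-gain}; your checks that $\mathbb E X^2<\infty$ and $\mathbb E X=I(f)$ make explicit the hypotheses the paper uses tacitly.
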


\subsection{Error bounds for $L^p$ integrands with $p\in (1,2)$}\label{subsec:Lp}
Although the preceding analysis yields an error bound no better than that of plain MC, we prove that the median estimator ${\mathcal M}_{m,R,\mathcal S}(f)$ enjoys a high‑probability error bound whenever  $f\in L^p$ with $p\in (1,2)$. The proof uses a truncation argument, which serves purely as an analytical tool and is never executed in the actual algorithm.

\begin{theorem}\label{thm:lp-median-bound}
	Let $p\in (1,2)$ and $\delta\in (0,1/8)$. If
	$\mathcal S$ is a $(\log n)^{1+\beta}$-reduced GNS with
	$\beta>0$, then for every $f\in L^p$,
	\begin{align*}
	\mathbb P\left\{
	\left|\mathcal M_{m,R,\mathcal S}(f)-I(f)\right|
	>
	\left(\delta+\sqrt{C_\beta}\right)\delta^{-\frac{1}{p}}
	\|f\|_{L^p}
	b^{-m(1-\frac{1}{p})}
	\right\}
	\le
	\frac{1}{2}(8\delta)^{\frac{R+1}{2}},
	\end{align*}
	where $C_\beta$ is defined in Lemma~\ref{cor:dimension-independent-gain}.
\end{theorem}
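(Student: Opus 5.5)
Write $N=b^m$ and choose a truncation level $T:=\delta^{-1/p}\|f\|_{L^p}N^{1/p}$. We split $f=g+h$ with $g:=f\mathbf 1\{|f|\le T\}$ and $h:=f\mathbf 1\{|f|>T\}$. By linearity, $\mathcal I_{m,\mathcal S}(f)-I(f)=(\mathcal I_{m,\mathcal S}(g)-I(g))+\mathcal I_{m,\mathcal S}(h)-I(h)$. The plan is to show that a single estimate $X=\mathcal I_{m,\mathcal S}(f)$ satisfies $\mathbb P\{|X-I(f)|>\varepsilon\}\le 4\delta$ with $\varepsilon=(\delta+\sqrt{C_\beta})\delta^{-1/p}\|f\|_{L^p}N^{-(1-1/p)}$. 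Lemma~\ref{lem:median-amplification} then gives the bound $2^R(4\delta)^{(R+1)/2}=\tfrac12(8\delta)^{(R+1)/2}$, because $2^R\cdot 4^{(R+1)/2}=\tfrac12 8^{(R+1)/2}$.

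\textbf{The heavy part $h$.} Each $\widetilde{\bm x}_n$ is uniform, so a union bound gives
\[
\mathbb P\{\exists n:\ |f(\widetilde{\bm x}_n)|>T\}\le N\,\mathbb P\{|f(\bm U)|>T\}\le N\|f\|_{L^p}^p T^{-p}=\delta .
\]
Off this event $\mathcal I_{m,\mathcal S}(h)=0$. The bias term is deterministic:
\[
|I(h)|\le \int |f|\mathbf 1\{|f|>T\}\le \|f\|_{L^p}^pT^{1-p}=\delta^{1-1/p}\|f\|_{L^p}N^{-(1-1/p)}.
\]
Since $\delta<1$, this is at most $\delta\cdot\delta^{-1/p}\|f\|_{L^p}N^{-(1-1/p)}$, which is exactly the $\delta$-part of $\varepsilon$.

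\textbf{The light part $g$.} Here I will use Lemma~\ref{cor:dimension-independent-gain} with Chebyshev's inequality. We have $\sigma^2_{MC}(g)\le\int|g|^2\le T^{2-p}\|f\|_{L^p}^p=\delta^{-(2-p)/p}\|f\|_{L^p}^2N^{(2-p)/p}$, so
\[
\operatorname{Var}(\mathcal I_{m,\mathcal S}(g))\le C_\beta \delta^{-2/p+1}\|f\|_{L^p}^2N^{2/p-2}.
\]
Chebyshev's inequality with threshold $\sqrt{C_\beta}\,\delta^{-1/p}\|f\|_{L^p}N^{-(1-1/p)}$ then gives probability at most $\delta^{-2/p+1}/\delta^{-2/p}=\delta$.

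Combining the two parts, the single-estimate failure probability is at most $2\delta\le4\delta$, which suffices. The constant $4\delta$ leaves slack, and perhaps the authors use a weaker Markov-type bound somewhere. The only delicate point is making the choice of $T$ balance both the $\delta$-scaling and the $N$-scaling. The structural step, which I expect to be the conceptual obstacle, is recognizing that the median is needed only because $f\notin L^2$: truncation produces an $L^2$ function whose variance bound carries the correct power of $N$. Everything else reduces to Markov, Chebyshev, and Lemma~\ref{lem:median-amplification}.
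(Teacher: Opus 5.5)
Your argument is correct and is essentially the paper's proof. It uses the same truncation level $\delta^{-1/p}\|f\|_{L^p}N^{1/p}$, the union bound giving $\delta$ for the event that some sample is truncated, the bias bound $\delta^{1-1/p}\|f\|_{L^p}N^{1/p-1}$, and Chebyshev via Lemma~\ref{cor:dimension-independent-gain} giving another $\delta$. Two differences are immaterial: the paper clamps $f$ at $\pm L_{N,\delta}$ rather than zeroing it, and it treats $\|f\|_{L^p}=0$ (where your $T=0$) separately.

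The one slip is your final bookkeeping. We have $2^R 4^{(R+1)/2}=2^{2R+1}$, whereas $\frac12 8^{(R+1)/2}=2^{(3R+1)/2}$, so $\eta=4\delta$ would \emph{not} suffice, and there is no slack to attribute to the authors. You must apply Lemma~\ref{lem:median-amplification} with $\eta=2\delta$, which you did establish. This gives exactly $2^R(2\delta)^{(R+1)/2}=\frac12(8\delta)^{(R+1)/2}$, as in the paper.
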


\begin{proof}
	Write $N=b^m$ and first consider one replicate
	$\mathcal I_{m,\mathcal S}(f)$. If $\|f\|_{L^p}=0$, the claim is
	immediate. Otherwise, define
	\begin{equation*}
	L_{N,\delta}
	:=
	\delta^{-\frac{1}{p}}\|f\|_{L^p}N^{\frac{1}{p}}
	\end{equation*}
	and the truncated function
	\begin{equation*}
	f_{N,\delta}(\bm x)
	:=
	\max\left\{-L_{N,\delta},
	\min\{f(\bm x),L_{N,\delta}\}\right\}.
	\end{equation*}

	Let $A$ be the event that none of the $N$ function values is truncated.
	By the union bound, Markov's inequality, and the fact that each $\widetilde{\bm x}_n$ is uniform on $(0,1)^s$,
	\begin{align}\label{eq:lp-truncation-sample-event}
	\mathbb P(A^c)
	\le
	\sum_{n=0}^{N-1}
	\mathbb P\{|f(\widetilde{\bm x}_n)|>L_{N,\delta}\}\le
	N L_{N,\delta}^{-p}\|f\|_{L^p}^{p}
	=
	\delta.
	\end{align}
	On $A$, we have
	$\mathcal I_{m,\mathcal S}(f)=
	\mathcal I_{m,\mathcal S}(f_{N,\delta})$.

	Since $|f|\le L_{N,\delta}^{1-p}|f|^p$ on
	$\{|f|>L_{N,\delta}\}$, the truncation bias satisfies
    \begin{equation}\label{eq:lp-truncation-bias}
        \begin{aligned}
            |I(f)-I(f_{N,\delta})|
	&\le
	\int_{(0,1)^s}|f(\bm x)|
	\mathbf1_{\{|f(\bm x)|>L_{N,\delta}\}}\,\rd\bm x\\
	&\le
	L_{N,\delta}^{1-p}\|f\|_{L^p}^{p}
	=
	\delta^{1-\frac{1}{p}}\|f\|_{L^p}N^{\frac{1}{p}-1}.
        \end{aligned}
    \end{equation}
	Moreover,
	\begin{equation}\label{eq:lp-truncation-l2}
	\|f_{N,\delta}\|_{L^2}^2
	=
	\int_{(0,1)^s}\min\{|f(\bm x)|,L_{N,\delta}\}^2\,\rd\bm x
	\le
	L_{N,\delta}^{2-p}\|f\|_{L^p}^{p}=\delta^{-\frac{2}{p}+1}\|f\|_{L^p}^2N^{\frac{2}{p}-1}.
	\end{equation}
	Applying Lemma~\ref{cor:dimension-independent-gain} to
	$f_{N,\delta}\in L^2$ and using
	\eqref{eq:lp-truncation-l2}, we obtain
	\begin{align*}
	\operatorname{Var}\left(
	\mathcal I_{m,\mathcal S}(f_{N,\delta})
	\right)
	\le
	\frac{C_\beta}{N}\|f_{N,\delta}\|_{L^2}^2\le
	C_\beta\delta^{-\frac{2}{p}+1}
	\|f\|_{L^p}^2N^{\frac{2}{p}-2}.
	\end{align*}
	Consequently, Chebyshev's inequality gives
	\begin{align}
	&\mathbb P\left\{
	\left|
	\mathcal I_{m,\mathcal S}(f_{N,\delta})-I(f_{N,\delta})
	\right|
	>
	\sqrt{C_\beta}\delta^{-\frac{1}{p}}
	\|f\|_{L^p}N^{\frac{1}{p}-1}
	\right\}
	\le\delta.
	\label{eq:lp-truncated-chebyshev}
	\end{align}
    
	Combining \eqref{eq:lp-truncation-sample-event},
	\eqref{eq:lp-truncation-bias}, and
	\eqref{eq:lp-truncated-chebyshev} yields
	\begin{align*}
	\mathbb P\left\{
	\left|\mathcal I_{m,\mathcal S}(f)-I(f)\right|
	>
	\left(\delta+\sqrt{C_\beta}\right)\delta^{-\frac{1}{p}}
	\|f\|_{L^p}N^{\frac{1}{p}-1}
	\right\}
	\le2\delta.
	\end{align*}
	The asserted median bound follows from
	Lemma~\ref{lem:median-amplification} with $\eta=2\delta$.
\end{proof}

\begin{remark}
  We note that a similar bound holds for the median of MC estimates; see \cite{kunsch2019optimal} for an example.
\end{remark}

\subsection{Error bounds for $\mathcal H_{\mathrm{wav},\alpha,s,p,q}$ integrands}\label{subsec:Haarintegrands}

Median RQMC can achieve a convergence rate faster than that of MC for integrands with regularity beyond $L^2$. Here we consider integrands in the Haar wavelet space $\mathcal  H_{\mathrm{wav},\alpha,s,p,q}$ (see Section~\ref{subsec:haar-space}). We assume throughout this section that $\alpha_p=\alpha-\left(1/p-1/2\right)_+>0$, which implies that $\mathcal H_{\mathrm{wav},\alpha,s,p,q}\subseteq \mathcal H_{\mathrm{wav},\alpha_p,s,2,q}\subseteq L^2$ (see Lemma~\ref{lem:HinL2}).

 To facilitate the analysis, we introduce the ANOVA decomposition \cite{meandim}:
    $$f(\bm x)=\sum_{u\subseteq 1{:}s} f_u(\bsx_u),$$
    where each component $f_u:(0,1)^{|u|}\to \real$ is defined recursively by $f_\emptyset=I(f)$ and
    \begin{equation}\label{eqn:fudef}
     f_u(\bm x_u)
	:=
	\int_{(0,1)^{s-|u|}}
	f(\bm x_u,\bm x_{-u})\,\rd\bm x_{-u}-\sum_{\substack{v\subsetneqq u}}f_v(\bsx_v).   
    \end{equation}
Let $\supp(\bsh)=\{j\in 1{:}s\mid h_j\neq 0\}$ denote the support of $\bsh=(h_1,\dots,h_s)$. As shown in \cite[Section 3]{pan2026dimension}, the Walsh coefficients $\widehat f(\bm{h})=\widehat {f_u}(\bsh_u)$ if $\supp(\bsh)=u$, and $\widehat {f_u}(\bsh_u)=0$ if $\supp(\bsh_u)\neq u$. Then by Lemma~\ref{lem:HaarWalshBlock}, the Haar detail projection $Q_{\bm j}f=Q_{\bm j_u}f_u$ if $\supp(\bm j)=u$, and $Q_{\bm j_u}f_u=0$ if $\supp(\bm j_u)\neq u$. Plugging these estimates into \eqref{eq:haar-wavelet-norm} yields
\begin{multline}\label{eqn:fuleqf}
 \|f_u\|^q_{\mathrm{wav},\alpha,|u|,p,q}=\sum_{\bm j_u\in\mathbb N_0^{|u|}}
		b^{q(\alpha-1/p+1/2)|\bm j_u|_1}
		\left(
		\sum_{\bm k\in\Delta_{\bm j_u-\bm 1}}
		\sum_{\bm i\in\nabla_{\bm j_u}}
		\left|
		\left\langle f_u,\psi_{\bm i,\bm k}^{\bm j}\right\rangle
		\right|^p
		\right)^{q/p}\\ 
        =  \sum_{\substack{\bm j\in\mathbb N_0^{s}\\ \supp(\bm j)=u }}
		b^{q(\alpha-1/p+1/2)|\bm j|_1}
		\left(
		\sum_{\bm k\in\Delta_{\bm j-\bm 1}}
		\sum_{\bm i\in\nabla_{\bm j}}
		\left|
		\left\langle f,\psi_{\bm i,\bm k}^{\bm j}\right\rangle
		\right|^p
		\right)^{q/p}\leq \|f\|^q_{\mathrm{wav},\alpha,s,p,q}.    
\end{multline}
 In particular, each component $f_u\in \mathcal H_{\mathrm{wav},\alpha,|u|,p,q}$.

Next, we introduce the frequency set used in the error analysis. For nonempty $u\subseteq1{:}s$ and $T\in\mathbb R$, let
\begin{equation*}
	K_{u}(T):=\{\bm h\in\mathbb N_0^s: \supp(\bsh)=u, \mu_1(\bm h)\le T\},
\end{equation*}
where $\mu_1(\bm h)$ is given by \eqref{eqn:bsmu}. The following lemma bounds the cardinality of $K_{u}(T)$.
\begin{lemma}\label{lem:Ku-size}
	Let $u\subseteq 1{:}s$ be nonempty, $T\in\mathbb R$, and $T_+=\max(T,0)$. Then
	\begin{equation*}
		|K_{u}(T)|
		\le
        b^{T}
		\left(\frac{b-1}{b}\right)^{|u|-1}
		\frac{(T_+)^{|u|-1}}{(|u|-1)!}.
	\end{equation*}
\end{lemma}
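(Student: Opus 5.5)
We reduce the count to a lattice-point count on the levels $\bm\ell=\Vec{\bm\mu}_1(\bm h)$. For $h\in\mathbb N$ with $\mu_1(h)=\ell\ge1$, the leading digit at position $\ell-1$ is one of $b-1$ nonzero values, and the lower $\ell-1$ digits are arbitrary. So there are exactly $(b-1)b^{\ell-1}$ such $h$. Hence
\begin{equation*}
|K_u(T)|=\sum_{\substack{\bm\ell\in\mathbb N^{|u|}\\ |\bm\ell|_1\le T}}\prod_{j\in u}(b-1)b^{\ell_j-1}
=\Big(\frac{b-1}{b}\Big)^{|u|}\sum_{\substack{\bm\ell\in\mathbb N^{|u|}\\ |\bm\ell|_1\le T}} b^{|\bm\ell|_1}.
\end{equation*}
If $T<|u|$, the sum is empty and the bound is trivial, since the right-hand side is nonnegative. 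In particular this covers $T<1$; for $|u|=1$ and $T\in[0,1)$ the right-hand side is $b^T\ge 1>0$. So we may assume $T\ge|u|\ge1$, and then $T_+=T$.

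Write $d=|u|$ and group by $n=|\bm\ell|_1$. The number of $\bm\ell\in\mathbb N^d$ with $|\bm\ell|_1=n$ is $\binom{n-1}{d-1}$. Thus
\begin{equation*}
|K_u(T)|=\Big(\frac{b-1}{b}\Big)^{d}\sum_{n=d}^{\lfloor T\rfloor}\binom{n-1}{d-1}b^n.
\end{equation*}
We bound $\binom{n-1}{d-1}\le \binom{\lfloor T\rfloor-1}{d-1}\le \frac{T^{d-1}}{(d-1)!}$ for $n\le \lfloor T\rfloor$. This uses that $\binom{n-1}{d-1}$ is nondecreasing in $n$, together with $\binom{M}{k}\le M^k/k!$. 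The remaining factor is a geometric sum:
\begin{equation*}
\sum_{n\le\lfloor T\rfloor}b^n\le \frac{b^{\lfloor T\rfloor+1}}{b-1}\le \frac{b}{b-1}\,b^{T}.
\end{equation*}
Multiplying, one factor $\frac{b-1}{b}$ cancels, leaving $b^T\big(\frac{b-1}{b}\big)^{d-1}\frac{T^{d-1}}{(d-1)!}$, which is exactly the claimed bound.

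The only delicate point is to avoid losing more than one factor $(b-1)/b$, and to make the constant come out as exactly $1$ rather than some $C^d$. Pulling the binomial out at its maximal value before summing the geometric series achieves this; the crude bound $\binom{M}{k}\le M^k/k!$ with $M=\lfloor T\rfloor-1\le T$ suffices. One should also double-check the edge case $d=1$, where the binomial factor is $1$ and $T_+^0/0!=1$ (with the convention $0^0=1$); the argument there is just the geometric sum.
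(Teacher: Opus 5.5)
Your proposal is correct and follows essentially the same route as the paper. Both partition $K_u(T)$ by $\Vec{\bm\mu}_1(\bm h)$, use the count $(b-1)b^{\ell-1}$ to reduce to $\bigl(\frac{b-1}{b}\bigr)^{|u|}\sum_{n=|u|}^{\lfloor T\rfloor}\binom{n-1}{|u|-1}b^n$, pull the binomial out at its largest value, and sum the geometric series, which cancels one factor $\frac{b-1}{b}$. Your intermediate bound $\binom{\lfloor T\rfloor-1}{|u|-1}$ is slightly sharper than the paper's $\lfloor T\rfloor^{|u|-1}/(|u|-1)!$, but both give the same final estimate.
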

\begin{proof}
	If $T<|u|$, the set is empty and the inequality is trivial. Assume henceforth that $T\ge |u|$. Since the number of integers $h\in\mathbb N$ with $\mu_1(h)=\ell$ equals $(b-1)b^{\ell-1}$, partitioning $K_{u}(T)$ by the value of $\Vec{\bm{\mu}}_1(\bm{h})$ gives
    $$|K_{u}(T)|=\sum_{n=|u|}^{\lfloor T\rfloor}\sum_{\substack{\bsl_u\in \natu^{|u|}\\ |\bsl_u|_1=n}}\left(\prod_{j\in u} (b-1)b^{\ell_j-1}\right)=
		\left(\frac{b-1}{b}\right)^{|u|}
		\sum_{n=|u|}^{\lfloor T\rfloor}
		\binom{n-1}{|u|-1}b^{n}. $$
    The proof is complete after bounding
    \begin{equation}\label{eqn:cardbound}
        \sum_{n=|u|}^{\lfloor T\rfloor}
		\binom{n-1}{|u|-1}b^{n}\leq \frac{(\lfloor T\rfloor)^{|u|-1}}{(|u|-1)!}
		\sum_{n=0}^{\lfloor T\rfloor}b^n\leq \frac{(T_+)^{|u|-1}b^{T+1}}{(|u|-1)!(b-1)}.
    \end{equation}
\end{proof}

The next lemma bounds the sum of squared Walsh coefficients outside $K_{u}(T)$.

\begin{lemma}\label{lem:walsh-tail-outside-Ku}
	Let  $u\subseteq1{:}s$ be nonempty, $T\in\mathbb R$, and $T_+=\max(T,0)$. Then for $f\in\mathcal H_{\mathrm{wav},\alpha,s,p,q}$ with ANOVA components $f_u$,
    $$\sum_{\substack{\bm h\in\mathbb N_0^{s}\setminus K_{u}(T)\\\supp(\bsh)=u}}
	|\widehat f(\bm h)|^2\le C_{\alpha_p,q}^{|u|}\left(1+\frac{ (T_+)^{|u|-1}}{(|u|-1)!}\right)^{(1-2/q)_+} b^{-2\alpha_p T}
	\|f_u\|^2_{\mathrm{wav},\alpha,|u|,p,q},$$
    where $C_{\alpha_p,q}=1$ if $q\in [1,2]$ and 
    $$C_{\alpha_p,q}=\left(1-b^{-\frac{2\alpha_pq}{q-2}}\right)^{-1+2/q} \quad \text{if } q>2.$$
\end{lemma}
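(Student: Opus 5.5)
Partition the tail by Walsh blocks. Every $\bsh$ with $\supp(\bsh)=u$ lies in exactly one block $\Lambda_{\bsl}$ with $\bsl=\Vec{\bm\mu}_1(\bsh)$, $\supp(\bsl)=u$, and $\mu_1(\bsh)=|\bsl|_1$. So $\bsh\notin K_u(T)$ exactly when $|\bsl|_1>T$. By Lemma~\ref{lem:HaarWalshBlock}, and since $Q_{\bsl}f=Q_{\bsl_u}f_u$ for $\supp(\bsl)=u$,
\[
\sum_{\substack{\bsh\notin K_u(T)\\ \supp(\bsh)=u}}|\widehat f(\bsh)|^2=\sum_{\substack{\bsl_u\in\natu^{|u|}\\ |\bsl_u|_1>T}}\|Q_{\bsl_u}f_u\|_{L^2}^2 .
\]
Write $a_{\bsl}:=b^{\alpha_p|\bsl|_1}\|Q_{\bsl_u}f_u\|_{L^2}$. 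Then the sum equals $\sum_{|\bsl|_1>T} b^{-2\alpha_p|\bsl|_1}a_{\bsl}^2$. Lemma~\ref{lem:HinL2}, applied to $f_u$ in dimension $|u|$, gives $\sum_{\bsl}a_{\bsl}^q\le\|f_u\|^q_{\mathrm{wav},\alpha,|u|,p,q}$.

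Case $q\in[1,2]$. Bound $b^{-2\alpha_p|\bsl|_1}\le b^{-2\alpha_pT}$ on the range $|\bsl|_1>T$. Then use $\sum a_{\bsl}^2\le(\sum a_{\bsl}^q)^{2/q}$, which is the monotonicity of $\ell^q$ norms for $q\le2$. This gives the claim with constant $1$.

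Case $q>2$. Apply Hölder with exponents $q/2$ and $r:=q/(q-2)$:
\[
\sum_{|\bsl|_1>T}b^{-2\alpha_p|\bsl|_1}a_{\bsl}^2\le\Big(\sum_{|\bsl|_1>T}b^{-2\alpha_p r|\bsl|_1}\Big)^{1/r}\|f_u\|^2_{\mathrm{wav},\alpha,|u|,p,q}.
\]
The main work is estimating the geometric lattice sum. Group by $n=|\bsl|_1$ and use $\#\{\bsl\in\natu^{|u|}:|\bsl|_1=n\}=\binom{n-1}{|u|-1}$. Write $\rho:=b^{-2\alpha_p r}<1$. The sum is then $\sum_{n>T,\,n\ge|u|}\binom{n-1}{|u|-1}\rho^n$.

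To estimate it, let $n_0$ be the first admissible $n$ and substitute $n=n_0+t$. Use
\[
\binom{n_0+t-1}{k}\le\sum_{i=0}^{k}\binom{n_0-1}{k-i}\binom{t}{i}
\]
(Vandermonde), with $k=|u|-1$. Then use $\sum_t\binom ti\rho^t=\rho^i/(1-\rho)^{i+1}$. This gives a bound of the form $\rho^{n_0}(1-\rho)^{-|u|}\big(\sum_i\binom{n_0-1}{k-i}\big)$.

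A simpler route may suffice: bound $\binom{n-1}{k}\le\binom{n_0-1}{k}\binom{t+k}{k}$, which holds by the product form of the binomial. Then
\[
\sum_t\binom{t+k}{k}\rho^t=(1-\rho)^{-(k+1)}.
\]
One also has $\binom{n_0-1}{k}\le\max\!\big(1,(T_+)^{k}/k!\big)$, since either $n_0=|u|$ (binomial equal to $1$) or $n_0-1\le T$. Finally $\rho^{n_0}\le\rho^{T}=b^{-2\alpha_p rT}$.

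Taking the $1/r=1-2/q$ power turns $\rho^{T/r}$ into $b^{-2\alpha_pT}$ and $(1-\rho)^{-|u|/r}$ into $C_{\alpha_p,q}^{|u|}$. The binomial factor is bounded by $\big(1+(T_+)^{|u|-1}/(|u|-1)!\big)^{1-2/q}$. This yields exactly the stated form.

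The only delicate point is the bookkeeping of the lattice-sum bound. In particular, handle $T<|u|$ separately: there $n_0=|u|$, the binomial factor is $1$, and $\rho^{|u|}\le\rho^{T}$ holds even for negative $T$ because $\rho<1$.
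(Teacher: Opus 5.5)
Your proposal is correct and follows the paper's proof. It uses the same reduction to $\sum_{|\bsl_u|_1>T}\|Q_{\bsl_u}f_u\|_{L^2}^2$ via Lemma~\ref{lem:HaarWalshBlock}, the same $\ell^q\subseteq\ell^2$ step for $q\le 2$ (equivalent to the paper's use of \eqref{eqn:Hnormpq}), and the same H\"older split with exponent $q/(q-2)$ for $q>2$. The only difference is that you prove the bound $\sum_{n\ge n_0}\binom{n-1}{|u|-1}\rho^n\le\rho^{n_0}\binom{n_0-1}{|u|-1}(1-\rho)^{-|u|}$ directly, via the factorwise comparison $\binom{n_0-1+t}{k}\le\binom{n_0-1}{k}\binom{t+k}{k}$, whereas the paper cites it as \cite[Lemma 13.24]{dick:pill:2010}.
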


\begin{proof}
Since $\widehat f(\bm h)=\widehat {f_u}(\bm h_u)$ when $\supp(\bsh)=u$, the left hand side 
\begin{equation*}
   \sum_{\substack{\bm h\in\mathbb N_0^{s}\setminus K_{u}(T)\\\supp(\bsh)=u}}
	|\widehat f(\bm h)|^2=\sum_{\substack{\bm h_u\in\mathbb N^{|u|}\\ \mu_1(\bsh_u)\geq T }}
	|\widehat {f_u}(\bm h_u)|^2=\sum_{\substack{\bsl_u \in \mathbb N^{|u|}\\|\bm \ell_u|_1\geq T}}\|Q_{\bm \ell_u}f_u\|_{L^2}^2, 
\end{equation*}
where the second equality follows from Lemma~\ref{lem:HaarWalshBlock}.   

When $q\in [1,2]$, applying \eqref{Walsh_bound} to $f_u\in \mathcal H_{\mathrm{wav},\alpha,|u|,p,q}$ yields
\begin{equation}\label{eqn:qsmaller2}
   \sum_{\substack{\bsl_u \in \mathbb N^{|u|}\\|\bm \ell_u|_1\geq T}}\|Q_{\bm \ell_u}f_u\|_{L^2}^2
   \leq \frac{1}{b^{2 \alpha_p T}} \sum_{\substack{\bsl_u \in \mathbb N^{|u|}}}b^{2\alpha_p |\bm \ell_u|_1}\|Q_{\bm \ell_u}f_u\|_{L^2}^2\leq \frac{\|f_u\|^2_{\mathrm{wav},\alpha,|u|,p,2}}{b^{2\alpha_p T}}  .
\end{equation}
The conclusion then follows from $\|f_u\|_{\mathrm{wav},\alpha,|u|,p,2}\leq \|f_u\|_{\mathrm{wav},\alpha,|u|,p,q}$ (see \eqref{eqn:Hnormpq}). 

  When $q>2$, let $q':=q/(q-2)$ so that $1-2/q=1/q'$. Set $T_*=\max(|u|,\lceil T\rceil)$. H\"older's inequality and \eqref{Walsh_bound} give
  \begin{equation}\label{eqn:LqLq'}
      \begin{aligned}
       \sum_{\substack{\bm{\ell}_u \in \mathbb N^{|u|}\\|\bm{\ell}_u|_1\geq T}}\|Q_{\bm{\ell}_u}f_u\|_{L^2}^2
    &\leq 
    \left(\sum_{\substack{\bm{\ell}_u \in \mathbb N^{|u|}\\|\bm{\ell}_u|_1\geq T}}
    b^{q\alpha_p|\bm{\ell}_u|_1}\|Q_{\bm{\ell}_u}f_u\|_{L^2}^q\right)^{2/q}
    \left(\sum_{\substack{\bm{\ell}_u \in \mathbb N^{|u|}\\|\bm{\ell}_u|_1\geq T}}
    b^{-2q' \alpha_p |\bm{\ell}_u|_1}\right)^{1/q'} \\
    &\leq 
    \|f_u\|^2_{\mathrm{wav},\alpha,|u|,p,q}
    \left(\sum_{n= T_*}^\infty \binom{n-1}{|u|-1}
		b^{-2q'\alpha_p n}\right)^{1/q'}.    
      \end{aligned}
  \end{equation}
  By \cite[Lemma 13.24]{dick:pill:2010},
  $$\sum_{n= T_*}^\infty \binom{n-1}{|u|-1}
		b^{-2q'\alpha_p n}\leq b^{-2q'\alpha_p T_*}\binom{T_*-1}{|u|-1}\left(1-b^{-2q'\alpha_p}\right)^{-|u|}.$$   
        Further bounding $\binom{T_*-1}{|u|-1}\leq 1+\frac{ (T_+)^{|u|-1}}{(|u|-1)!}$  yields the claimed upper bound.
\end{proof}

Finally, to apply Lemma~\ref{thm:walsh-error-scrambled-net}, we need the following upper bound on the probability that $\bm h\in D_{\bm{C},m}^\perp(\bm M)$ under the coarse scrambling.

\begin{lemma}\label{lem:dualnetbound}
If $\mathcal S$ is a  $(\log n)^{1+\beta}$-reduced GNS with $\beta>0$, then for every nonzero $\bm h\in \mathbb N_0^{s}$ and the random dual net $D_{\bm{C},m}^\perp(\bm M)$ defined in Lemma~\ref{thm:walsh-error-scrambled-net},
	\begin{equation*}
	\mathbb P\{\bm h\in D_{\bm C,m}^{\perp}(\bm M)\}
	\le
	C_\beta b^{-m},
	\end{equation*}
    where $C_\beta$ is given by Lemma~\ref{cor:dimension-independent-gain}.
\end{lemma}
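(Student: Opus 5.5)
The natural route is to exploit the block structure of the coarse scrambling. For each coordinate we pass to base $b^{e_j}$. The condition $\bm h\in D^\perp_{\bm C,m}(\bm M)$ is the linear constraint $\sum_{j} C_j(1{:}m,1{:}m)^\top M_j^\top \vec h_j=\bm 0$ in $\mathbb F_b^m$. For a fixed nonzero $\vec h_j$, we will identify the distribution of the random vector $M_j^\top\vec h_j$.

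Write $\vec h_j$ in blocks of length $e_j$, and let $t_j$ be the index of the last nonzero block (so $t_j\ge1$ iff $h_j\ne0$). Since $M_j$ is block lower-triangular, $M_j^\top$ is block upper-triangular. The $t_j$-th block of $M_j^\top\vec h_j$ is (diagonal block)$^\top$ applied to a nonzero vector, hence uniform on $\mathbb F_b^{e_j}\setminus\{0\}$. Each earlier block $r<t_j$ receives the contribution of the uniform off-diagonal block $(M_j)_{t_j,r}^\top$ applied to the nonzero $t_j$-th block of $\vec h_j$, which makes it uniform on $\mathbb F_b^{e_j}$ and independent of everything else. Blocks beyond $t_j$ vanish. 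Hence $M_j^\top\vec h_j$ is uniform over vectors whose first $t_j-1$ blocks are arbitrary, whose $t_j$-th block is nonzero, and whose later blocks are zero. Equivalently, it is the uniform distribution on $\{\vec v\in \mathbb F_b^{e_jt_j}\}$ conditioned on the $t_j$-th block being nonzero. The coordinates are independent.

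We therefore compare with the unconstrained situation. Let $\vec w_j$ be uniform on $\mathbb F_b^{e_jt_j}$ (padded with zeros), independent over $j\in\supp(\bm h)$. The event $\{\text{all } t_j\text{-th blocks nonzero}\}$ has probability $\prod_{j\in\supp(\bm h)}(1-b^{-e_j})$. Hence
\[
\mathbb P\{\bm h\in D^\perp\}\le \prod_{j\in\supp(\bm h)}\frac{b^{e_j}}{b^{e_j}-1}\;\mathbb P\Bigl\{\sum_j C_j(1{:}m,1{:}m)^\top \vec w_j=\bm 0\Bigr\}.
\]
The sum $\sum_j C_j(1{:}m,1{:}m)^\top\vec w_j$ is a uniform random vector on the image of a linear map. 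So the last probability equals $b^{-\operatorname{rank}}$, where the rank is that of the map $(\vec w_j)_j\mapsto\sum_j C_j(1{:}m,1{:}m)^\top\vec w_j$. This map is the transpose of the map $\vec n\mapsto (C_j(1{:}e_jt_j,1{:}m)\vec n)_j$.

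The main obstacle is to show this rank is $\min(m,\sum_j e_jt_j)$, and then to handle the case $\sum_j e_jt_j<m$. For the rank, we use the equidistribution of the GNS in base $(b^{e_1},\dots,b^{e_s})$ from \cite[Theorem~1]{tezuka2013discrepancy}. This property says that, for $k_j=t_j$, the rows $C_j(1{:}e_jt_j,1{:}m)$ are jointly of full rank whenever $\sum_j e_jt_j\le m$, because every elementary box of that volume contains exactly one point among $b^{\sum e_jt_j}$ consecutive points. When $\sum_j e_jt_j\ge m$, we instead reduce the $t_j$ (trimming blocks) to reach rank $m$. Equidistribution only applies to whole blocks, so we expect to argue via box volumes at finer granularity, or to accept a rank of at least $m$ up to one block. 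If the rank is $m$, the probability is $b^{-m}$ and we are done.

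The case $\sum_j e_jt_j<m$ needs a separate argument: there the constraint forces $\vec w_j=0$, which is incompatible with the nonzero $t_j$-th blocks. So the probability is $0$ unless $\sum_j e_jt_j\ge m$.

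Finally, bound $\prod_{j}\frac{b^{e_j}}{b^{e_j}-1}\le C_\beta$ exactly as in the proof of Lemma~\ref{cor:dimension-independent-gain}. Alternatively, and more cleanly, we can cite the gain-coefficient argument of \cite[Corollary~3.4]{suzuki2025coarse}. That argument bounds $\mathbb P\{\bm h\in D^\perp\}$ by $b^{-m}\prod_j b^{e_j}/(b^{e_j}-1)$, so the lemma follows from that bound together with the product estimate already established.
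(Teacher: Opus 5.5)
Your law of $M_j^\top\vec h_j$ (first $t_j-1$ blocks uniform, $t_j$-th block uniform on nonzero vectors, independent across $j$) is correct. So are the conditioning bound $\mathbb P\{\bm h\in D^{\perp}_{\bm C,m}(\bm M)\}\le b^{-\operatorname{rank}(L)}\prod_{j\in\supp(\bm h)}b^{e_j}/(b^{e_j}-1)$ and the equidistribution argument for injectivity of $L$ when $K:=\sum_j e_jt_j\le m$.

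The proof, however, stops at the step you yourself call the main obstacle: showing $\operatorname{rank}(L)=m$ when $K>m$. Neither of your workarounds closes it. Trimming to whole blocks $k_j\le t_j$ only reaches some $K'=\sum_j e_jk_j\le m$, and the deficit $m-K'$ can be as large as $\max_j e_j-1$. Accepting rank $K'$ therefore costs a factor up to $b^{\max_j e_j-1}$. Since the degrees $e_j$ must grow with $s$, this destroys the dimension-free constant $C_\beta$. Your fallback citation does not close the gap either. \cite[Corollary~3.4]{suzuki2025coarse} bounds the maximal gain coefficient, and saying that its argument bounds $\mathbb P\{\bm h\in D^{\perp}_{\bm C,m}(\bm M)\}$ restates what has to be proved. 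You never give the link between a bound on $\Gamma(\mathcal I_{m,\mathcal S})$ and a bound on a single dual-net probability.

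The missing idea is to go \emph{up} rather than down: apply equidistribution with $k_j=t_j$ at level $B=b^{K}$ even when $K>m$. The first $b^K$ points of $\mathcal S$ meet each of the $b^K$ boxes exactly once. Hence the $K\times K$ matrix stacking $C_j(1{:}e_jt_j,1{:}K)$ over $j$ is invertible. Its first $m$ columns, i.e.\ the stack of $C_j(1{:}e_jt_j,1{:}m)$, are then linearly independent. So $L$ maps onto $\mathbb F_b^m$ and $\mathbb P\{L\vec w=\bm 0\}=b^{-m}$. The same invertible matrix also gives injectivity when $K\le m$, so no trimming is ever needed.

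With this fix your argument becomes a valid proof by a genuinely different route. It is self-contained and gives the sharper factor over $\supp(\bm h)$ only. It also shows the probability vanishes whenever $\sum_j e_j\lceil\mu_1(h_j)/e_j\rceil\le m$, which is the refinement the paper's discussion leaves for future work.

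The paper's proof supplies exactly the link your citation lacked. Lemma~\ref{thm:walsh-error-scrambled-net} with $f=\operatorname{wal}_{\bm h}$ gives $\mathbb E_{\bm D}|\mathcal I_{m,\mathcal S}(f)-I(f)|^2=\bsone\{\bm h\in D^{\perp}_{\bm C,m}(\bm M)\}$. Averaging over $\bm M$ gives $\operatorname{Var}(\mathcal I_{m,\mathcal S}(\operatorname{wal}_{\bm h}))=\mathbb P\{\bm h\in D^{\perp}_{\bm C,m}(\bm M)\}$. Lemma~\ref{cor:dimension-independent-gain} with $\sigma^2_{MC}(\operatorname{wal}_{\bm h})=1$ then bounds this by $C_\beta b^{-m}$. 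That route takes two lines but depends on Suzuki's gain-coefficient bound.
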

\begin{proof}
    Consider the $L^2$ integrand $f_{\bsh}(\bsx)=\operatorname{wal}_{\bm h}(\bm x)$ and the corresponding RQMC estimator ${\mathcal I}_{m,\mathcal S}(f_{\bsh})$. By Lemma~\ref{thm:walsh-error-scrambled-net},
    \begin{equation*}
	\mathbb E_{\bm D}
	\left[
	\left|
	{\mathcal I}_{m,\mathcal S}(f_{\bsh})
	-
	I(f_{\bsh})
	\right|^2	
	\right]
	=
	\sum_{\bm0\neq\bm h'\in D_{\bm{C},m}^\perp(\bm M)}
	|\widehat f_{\bsh}(\bm h')|^2=\bsone\{\bsh\in D_{\bm{C},m}^\perp(\bm M)\}.
	\end{equation*}
    Taking expectation with respect to $\bm M$ yields $$\operatorname{Var}\left({\mathcal I}_{m,\mathcal S}(f_{\bsh})\right)=\mathbb P\{\bm h\in D_{\bm C,m}^{\perp}(\bm M)\}.$$
    Meanwhile, Lemma~\ref{cor:dimension-independent-gain} implies the bound
    $$\operatorname{Var}\left({\mathcal I}_{m,\mathcal S}(f_{\bsh})\right)\le
	 \frac{C_{\beta}}{b^m}\sigma_{MC}^2(f_{\bsh})=C_{\beta}b^{-m},$$
     which completes the proof.
\end{proof}

\begin{theorem}\label{thm:median-weighted-probability}
For nonzero $f\in\mathcal H_{\mathrm{wav},\alpha,s,p,q}$ with ANOVA components $f_u$, let
$$\gamma_{u,\alpha,p,q}:=\frac{\|f_u\|_{\mathrm{wav},\alpha,|u|,p,q}}{\|f\|_{\mathrm{wav},\alpha,s,p,q}}.$$
If $\mathcal S$ is a  $(\log n)^{1+\beta}$-reduced GNS with $\beta>0$, then for $\delta\in (0,1/8)$ and odd $R\in\natu$,
	\begin{equation}\label{eqn:Haarmedianbound}
	\mathbb P\left\{
	\left|{\mathcal M}_{m,R,\mathcal S}(f)-I(f)\right|
	> 
	\frac{\Psi_{\alpha,s,p,q,\gamma,\beta}(m) }{(b^{m}\delta)^{\alpha_p+1/2}} \|f\|_{\mathrm{wav},\alpha,s,p,q}
	\right\}
	\le
	\frac{1}{2}(8\delta)^{\frac{R+1}{2}},
	\end{equation}
    where, with $C_\beta$ as in Lemma~\ref{cor:dimension-independent-gain} and $C_{\alpha_p,q}$ as in Lemma~\ref{lem:walsh-tail-outside-Ku},
\begin{align}
\Psi_{\alpha,s,p,q,\gamma,\beta}(m)
&=
\left(1+C_\beta\psi_{\alpha,s,p,q,\gamma}(m)\right)^{\alpha_p}
\left(C_\beta \psi'_{\alpha,s,p,q,\gamma}(m)\right)^{1/2}, \notag\\
\psi_{\alpha,s,p,q,\gamma}(m) 
&=
\sum_{\emptyset\ne u\subseteq 1{:}s}
\left(\frac{b-1}{b}\right)^{|u|-1}
\frac{m^{|u|-1}}{(|u|-1)!}
(\gamma_{u,\alpha,p,q})^{\frac{1}{\alpha_{p}+1/2}}, \label{eqn:psim}\\
\psi'_{\alpha,s,p,q,\gamma}(m)
&=
\sum_{\emptyset\ne u\subseteq 1{:}s}
C_{\alpha_p,q}^{|u|}
\left(1+\frac{m^{|u|-1}}{(|u|-1)!}\right)^{(1-2/q)_+}
(\gamma_{u,\alpha,p,q})^{\frac{1}{\alpha_{p}+1/2}}. \label{eqn:psim2}
\end{align}
\end{theorem}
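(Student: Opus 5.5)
The plan is to prove a single-replicate bound of the form $\mathbb P\{|\mathcal I_{m,\mathcal S}(f)-I(f)|>\varepsilon\}\le 2\delta$, with $\varepsilon$ equal to the threshold in \eqref{eqn:Haarmedianbound}, and then apply Lemma~\ref{lem:median-amplification} with $\eta=2\delta$, exactly as in Theorem~\ref{thm:lp-median-bound}. Since $2^R(2\delta)^{(R+1)/2}=\tfrac12(8\delta)^{(R+1)/2}$, this yields the stated failure probability. The single-replicate bound will come from splitting the Walsh frequencies into a "head" set and a "tail". For the head we use a union bound over the event that some head frequency lies in the random dual net. For the tail we use a conditional second-moment bound via Lemma~\ref{thm:walsh-error-scrambled-net}.

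\textbf{Head.} For each nonempty $u$, I choose a threshold $T_u$, to be optimized, and define the head $K:=\bigcup_{u}K_u(T_u)$. Let $B$ be the event $K\cap D^\perp_{\bm C,m}(\bm M)\neq\emptyset$. By Lemma~\ref{lem:dualnetbound} and Lemma~\ref{lem:Ku-size}, using $T_u\le m$ in the polynomial factor,
\[
\mathbb P(B)\le C_\beta b^{-m}\sum_u b^{T_u}\left(\tfrac{b-1}{b}\right)^{|u|-1}\frac{m^{|u|-1}}{(|u|-1)!}.
\]
We want this to be at most $\delta$. Motivated by balancing against the tail term $b^{-2\alpha_pT_u}\gamma_u^2$, I take
\[
b^{T_u}=\lambda\,\gamma_{u,\alpha,p,q}^{1/(\alpha_p+1/2)}.
\]
Then $\mathbb P(B)\le C_\beta b^{-m}\lambda\,\psi(m)$. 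Choosing $\lambda=\delta b^m/(1+C_\beta\psi(m))$ gives $\mathbb P(B)\le\delta$. The "$1+$" in the denominator is harmless and matches the form of $\Psi$.

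A technical point here is that $T_u$ may exceed $m$ or be negative. If $T_u<|u|$ the head set for $u$ is empty, so that case is fine. To keep $T_u\le m$ so that $T_+\le m$, one notes $\gamma_u\le 1$ by \eqref{eqn:fuleqf} and $\lambda\le b^m$. I expect this bookkeeping, together with matching the exact constants, to be the main nuisance.

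\textbf{Tail.} Conditionally on $\bm M$ with $\bm M\in B^c$, Lemma~\ref{thm:walsh-error-scrambled-net} gives
\[
\mathbb E_{\bm D}|\mathcal I-I|^2=\sum_{\bm h\in D^\perp\setminus\{0\},\,\bm h\notin K}|\widehat f(\bm h)|^2.
\]
Taking the expectation over $\bm M$ restricted to $B^c$ and bounding the indicator of $B^c$ by $1$, Lemma~\ref{lem:dualnetbound} gives a bound of $C_\beta b^{-m}\sum_u\sum_{\supp\bm h=u,\,\bm h\notin K_u(T_u)}|\widehat f(\bm h)|^2$. Lemma~\ref{lem:walsh-tail-outside-Ku} bounds each inner sum by
\[
C_{\alpha_p,q}^{|u|}\Bigl(1+\tfrac{m^{|u|-1}}{(|u|-1)!}\Bigr)^{(1-2/q)_+}b^{-2\alpha_pT_u}\gamma_u^2\|f\|^2.
\]
Substituting $b^{-2\alpha_pT_u}\gamma_u^2=\lambda^{-2\alpha_p}\gamma_u^{2-2\alpha_p/(\alpha_p+1/2)}=\lambda^{-2\alpha_p}\gamma_u^{1/(\alpha_p+1/2)}$, the tail sum is at most $C_\beta b^{-m}\lambda^{-2\alpha_p}\psi'(m)\|f\|^2$.

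Chebyshev's inequality on $B^c$ then gives, with probability at least $1-\delta$, an error of at most
\[
\delta^{-1/2}\bigl(C_\beta\psi'\bigr)^{1/2}b^{-m/2}\lambda^{-\alpha_p}\|f\|.
\]
Inserting $\lambda$, this equals $(1+C_\beta\psi)^{\alpha_p}(C_\beta\psi')^{1/2}(b^m\delta)^{-\alpha_p-1/2}\|f\|$, which is exactly $\Psi$. A union bound with $B$ gives total failure probability $2\delta$, and the median lemma concludes the proof.
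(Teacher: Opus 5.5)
Your proposal is correct and follows the paper's proof essentially step for step. Your choice $b^{T_u}=\lambda\,\gamma_{u,\alpha,p,q}^{1/(\alpha_p+1/2)}$ with $\lambda=\delta b^m/(1+C_\beta\psi_{\alpha,s,p,q,\gamma}(m))$ is exactly the threshold \eqref{eqn:Tu}, and the remaining steps match the paper: the union bound over $K_u(T_u)$ via Lemmas~\ref{lem:Ku-size} and~\ref{lem:dualnetbound}, the conditional second-moment bound on the complementary event via Lemma~\ref{thm:walsh-error-scrambled-net}, Chebyshev, and median amplification with $\eta=2\delta$. The one piece of bookkeeping the paper makes explicit and you leave implicit is restricting to $u$ with $\gamma_{u,\alpha,p,q}>0$. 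For the remaining $u$, $\widehat f(\bm h)=0$ whenever $\supp(\bm h)=u$, so those $u$ contribute nothing, and you avoid the undefined product $b^{-2\alpha_p T_u}\gamma_{u,\alpha,p,q}^2=\infty\cdot 0$.
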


\begin{proof} Denote $\mathcal U=\{\emptyset\neq u\subseteq 1{:}s : \gamma_{u,\alpha,p,q}>0\}$. For every $u\in \mathcal U$, let
\begin{equation}\label{eqn:Tu}
T_{u}
	=	
	m+\log_b\delta-\log_b(1+C_\beta\psi_{\alpha,p,q,\gamma}(m))
	+\frac{\log_b(\gamma_{u,\alpha,p,q})}{\alpha_{p}+1/2},
\end{equation}
and define the event
	\begin{equation*}
	G(\bm M)
	=
	\left\{
	K_{u}(T_u)\cap D_{\bm C,m}^{\perp}(M)=\emptyset \ \forall u\in \mathcal{U}
	\right\}.
	\end{equation*}
   Applying a union bound to the complement of $G(\bm M)$, we obtain
   \begin{equation}\label{eqn:unionbound}
       \begin{aligned}
           \mathbb P(G(\bm M)^c)
     &\overset{(1)}{\leq} \frac{C_\beta}{b^m} \sum_{u\in \mathcal{U}} b^{T_u}\left(\frac{b-1}{b}\right)^{|u|-1}
		\frac{((T_u)_+)^{|u|-1}}{(|u|-1)!}
		 \\
        &\overset{(2)}{\leq} \frac{\delta C_\beta}{1+C_\beta\psi_{\alpha,s,p,q,\gamma}(m)}  \sum_{u\in \mathcal{U}} \left(\frac{b-1}{b}\right)^{|u|-1}
		\frac{m^{|u|-1}}{(|u|-1)!}(\gamma_{u,\alpha,p,q})^{\frac{1}{\alpha_p+1/2}},
       \end{aligned}
   \end{equation}
    where (1) uses Lemmas~\ref{lem:Ku-size} and \ref{lem:dualnetbound}, and (2) follows by substituting \eqref{eqn:Tu} and bounding $(T_u)_+\leq m$. Substituting \eqref{eqn:psim} then implies $\mathbb P(G(\bm M)^c)\leq \delta$.
    
	 Since $G(\bm M)$ is independent of the random shifts $\bm D$, conditioning on the random matrices $\bm M$ and applying Lemma~\ref{thm:walsh-error-scrambled-net} yields
\begin{align*}
    &\mathbb E\left[
	\left|{\mathcal I}_{m,\mathcal S}(f)-I(f)\right|^2\mathbf1_{G(\bm M)}
	\right]=
	\mathbb E\left[
	\sum_{\bm0\ne\bm h\in D_{\bm C,m}^{\perp}(M)}
	|\widehat f(\bm h)|^2\mathbf1_{G(\bm M)}
	\right]\\
	\overset{(3)}{\leq}&
	\mathbb E\left[
	\sum_{u\in \mathcal U}
	\sum_{\substack{\bm h\in\mathbb N_0^{s}\setminus K_{u}(T_u)\\\supp(\bsh)=u}}
	|\widehat f(\bm h)|^2
	\bsone\{\bsh\in D_{\bm{C},m}^\perp(\bm M)\}
	\right]
	\overset{(4)}{\leq}
    \frac{C_{\beta}}{b^{m}}
	\sum_{u\in \mathcal U}\sum_{\substack{\bm h\in\mathbb N_0^{s}\setminus K_{u}(T_u)\\\supp(\bsh)=u}}
	|\widehat f(\bm h)|^2,
\end{align*}
where (3) uses the fact that $\widehat f(\bsh)=0$  for $\bsh\neq\bm0$ and $\supp(\bsh)\notin\mathcal U$, and that $K_{u}(T_u)\cap D_{\bm{C},m}^\perp(\bm M)=\emptyset$ under $G(\bm M)$; (4) follows from Lemma~\ref{lem:dualnetbound}.

Applying Lemma~\ref{lem:walsh-tail-outside-Ku} then gives
     
	\begin{align*}
	&\mathbb E\left[
	\left|{\mathcal I}_{m,\mathcal S}(f)-I(f)\right|^2\mathbf1_{G(\bm M)}
	\right]\\
   \leq &\frac{C_{\beta}}{b^{m}}\sum_{u\in\mathcal U} C_{\alpha_p,q}^{|u|}\left(1+\frac{ ((T_u)_+)^{|u|-1}}{(|u|-1)!}\right)^{(1-2/q)_+} b^{-2\alpha_p T_u}
	\|f_u\|^2_{\mathrm{wav},\alpha_p,|u|,p,q}\\
    \overset{(5)}{\leq}& \frac{C_\beta\psi_{\alpha,s,p,q,\gamma}'(m)\left(1+C_\beta{\psi_{\alpha,s,p,q,\gamma}(m)}\right)^{2\alpha_p}}{b^{(2\alpha_p+1) m}\delta^{2\alpha_p}}\|f\|^2_{\mathrm{wav},\alpha,s,p,q},
	\end{align*}
	where (5) follows upon bounding $(T_u)_+\leq m$ and substituting \eqref{eqn:psim2} and \eqref{eqn:Tu}.
    
	Finally, by Markov's inequality and $\mathbb P(G(\bm M)^c)\le\delta$,
    \begin{align*}
       & \mathbb P\left\{
	\left|{\mathcal I}_{m,\mathcal S}(f)-I(f)\right|
	>
	\frac{\Psi_{\alpha,s,p,q,\gamma,\beta}(m) }{(b^{m}\delta)^{\alpha_p+1/2}} \|f\|_{\mathrm{wav},\alpha,s,p,q}
	\right\}\\
	\leq &  \mathbb P(G(\bm M)^c)
	+
	\left({b^{m}\delta}\right)^{1+2\alpha_p} \frac{\mathbb E\left[
	\left|{\mathcal I}_{m,\mathcal S}(f)-I(f)\right|^2\mathbf1_{G(\bm M)}
	\right]}{\Psi_{\alpha,s,p,q,\gamma,\beta}(m)^2\|f\|^2_{\mathrm{wav},\alpha,s,p,q}}
	\le
	2\delta .  
    \end{align*}
	The asserted bound follows from
	Lemma~\ref{lem:median-amplification} with $\eta=2\delta$.
\end{proof}

Since $\Psi_{\alpha,s,p,q,\gamma,\beta}(m)=O(m^{(\alpha_p+(1/2-1/q)_+)(s-1)})$ as $m\to\infty$, we have, with failure probability decaying exponentially in $R$,
\[
\left|{\mathcal M}_{m,R,\mathcal S}(f)-I(f)\right|
=
O\!\left(m^{\{\alpha_p+(1/2-1/q)_+\}(s-1)}b^{-(\alpha_p+1/2)m}\right).
\]
 Under additional assumptions on the relative norm $\gamma_{u,\alpha,p,q}$, the next corollary gives a dimension-independent bound for this error.

 \begin{corollary}\label{cor:tractability}
     Under the assumptions of Theorem~\ref{thm:median-weighted-probability}, assume there exists a nonnegative sequence $\{\Upsilon_j\}_{j\ge1}$ satisfying
     \begin{equation}\label{eqn:tractability}
       \sum_{j=1}^\infty \Upsilon^{\frac{1}{\alpha_p+1/2}}_j<\infty \quad \text{and} \quad  \gamma_{u,\alpha,p,q}\le (|u|!)^{\alpha_p+1/2}\prod_{j\in u}\Upsilon_j \ \forall\emptyset\ne u\subseteq1{:}s. 
     \end{equation}
     Then, for every $\varepsilon>0$, there is a constant $C_{\varepsilon,\Upsilon,\alpha,p,q,\beta}$ independent of $s$ and $m$ such that
    \begin{equation*}
	\mathbb P\left\{
	\left|{\mathcal M}_{m,R,\mathcal S}(f)-I(f)\right|
	> 
	\frac{C_{\varepsilon,\Upsilon,\alpha,p,q,\beta}}{b^{(\alpha_p+1/2-\varepsilon)m}\delta^{\alpha_p+1/2}} \|f\|_{\mathrm{wav},\alpha,s,p,q}
	\right\}
	\le
	\frac{1}{2}(8\delta)^{\frac{R+1}{2}}.
	\end{equation*}
 \end{corollary}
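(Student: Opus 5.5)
The plan is to apply Theorem~\ref{thm:median-weighted-probability} directly and show that, under \eqref{eqn:tractability}, $\Psi_{\alpha,s,p,q,\gamma,\beta}(m)\le C_{\varepsilon,\Upsilon,\alpha,p,q,\beta}\, b^{\varepsilon m}$ uniformly in $s$. Since $\Psi$ is built from $\psi$ and $\psi'$, it suffices to bound each of them by $C\,b^{\varepsilon' m}$ for arbitrary $\varepsilon'>0$, with $C$ independent of $s$. Then
\[
\Psi\le (1+C_\beta C b^{\varepsilon' m})^{\alpha_p}(C_\beta C b^{\varepsilon' m})^{1/2}\le C' b^{(\alpha_p+1/2)\varepsilon' m},
\]
and we choose $\varepsilon'=\varepsilon/(\alpha_p+1/2)$. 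Write $\kappa:=1/(\alpha_p+1/2)$. The hypothesis gives $(\gamma_{u,\alpha,p,q})^{\kappa}\le |u|!\prod_{j\in u}\Upsilon_j^{\kappa}$, so the $|u|!$ exactly cancels the factorial denominators.

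\textbf{Bounding $\psi$.} Using $((b-1)/b)^{|u|-1}\le1$,
\[
\psi_{\alpha,s,p,q,\gamma}(m)\le \sum_{\emptyset\ne u}|u|\,m^{|u|-1}\prod_{j\in u}\Upsilon_j^\kappa .
\]
Bound $|u|\le 2^{|u|}$ and set $a_j:=\Upsilon_j^\kappa$. Then $\psi\le \prod_{j=1}^\infty(1+2m a_j)$. This product is finite for each $m$ but grows with $m$, so it is not uniformly bounded. The key step, and the main obstacle, is to show that $\prod_j(1+2ma_j)=O(b^{\varepsilon' m})$ using only $\sum a_j<\infty$.

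I would do this by splitting the indices. Given $\eta>0$, choose $J$ such that $\sum_{j>J}a_j\le\eta$. The head is polynomial:
\[
\prod_{j\le J}(1+2ma_j)\le (1+2mA)^J,\qquad A:=\sup_j a_j.
\]
The tail satisfies $\prod_{j>J}(1+2ma_j)\le \exp(2m\eta)$. Taking $\eta$ with $2\eta\le \tfrac12\varepsilon'\log b$, the tail is at most $b^{\varepsilon' m/2}$. The polynomial head is absorbed into $C b^{\varepsilon' m/2}$. Here $J$, and hence the constant, depends only on $\Upsilon$ and $\varepsilon'$, not on $s$.

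\textbf{Bounding $\psi'$.} Use $(1+x)^{(1-2/q)_+}\le 1+x$ and
\[
\Bigl(1+\tfrac{m^{|u|-1}}{(|u|-1)!}\Bigr)|u|!\le |u|!+|u|\,m^{|u|-1}.
\]
The second piece, multiplied by $C_{\alpha_p,q}^{|u|}\prod a_j$, is handled exactly as above with $a_j$ replaced by $C_{\alpha_p,q}a_j$. The first piece, $\sum_u |u|!\,C^{|u|}\prod_{j\in u}a_j$, is not obviously summable. To avoid it, I would bound the factor more carefully. For $|u|\ge1$ and $m\ge1$,
\[
1+\frac{m^{|u|-1}}{(|u|-1)!}\le \frac{2\,(m+|u|)^{|u|-1}}{(|u|-1)!}.
\]
Using $(m+|u|)^{|u|-1}/(|u|-1)!\le e^{|u|}(1+m/|u|)^{|u|-1}$ and $(|u|-1)!\ge(|u|/e)^{|u|-1}$, the product with $|u|!$ gives a bound of the form $|u|\,(e(m+|u|))^{|u|-1}\lesssim |u|\,(2e)^{|u|}\max(m,|u|)^{|u|-1}$. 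The $m$-part again yields $\prod_j(1+c\,m a_j)$. The $|u|^{|u|}$ part remains troublesome. A cleaner fix is to note that $|u|$ effectively cannot exceed about $m$, since blocks with $|\bm\ell_u|_1\ge|u|>T_u$ already lie outside $K_u$. I would revisit step (5) so that $(T_u)_+$ is only used when $T_u\ge|u|$, which gives $|u|\le m$ and restores the bound $\prod_j(1+c\,m a_j)$. This is the step I am least sure of.

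Finally, substituting these bounds into \eqref{eqn:Haarmedianbound} yields the claim with a constant depending only on $\varepsilon,\Upsilon,\alpha,p,q,\beta$.
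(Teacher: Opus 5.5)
There is a genuine gap, in the bound for $\psi'_{\alpha,s,p,q,\gamma}(m)$. Your bound for $\psi_{\alpha,s,p,q,\gamma}(m)$ is correct and follows the paper, except that you prove by a head/tail split what the paper takes from \cite[Lemma~3]{HICKERNELL2003286}.

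For $\psi'$ you isolate the term $\sum_u C_{\alpha_p,q}^{|u|}|u|!\prod_{j\in u}a_j$ but never bound it, and the repair via $|u|\le m$ fails. Using $(T_u)_+$ only when $T_u\ge|u|$ removes the polynomial factor for the sets with $T_u<|u|$, but it does not remove those sets. Every $u$ with $|u|\ge m$ is of this kind, since $\gamma_{u,\alpha,p,q}\le1$ and $\delta<1$ force $T_u<m$. For such $u$, $K_u(T_u)=\emptyset$ and all of the Walsh mass of $f_u$ enters the error. The binomial factor in the proof of Lemma~\ref{lem:walsh-tail-outside-Ku} then equals $1$. After substituting $T_u$, these sets still contribute $C_{\alpha_p,q}^{|u|}(\gamma_{u,\alpha,p,q})^{\kappa}\le C_{\alpha_p,q}^{|u|}|u|!\prod_{j\in u}a_j$, with $|u|$ unrestricted. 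The case $q\in[1,2]$ shows this plainly. There $(1-2/q)_+=0$ and $C_{\alpha_p,q}=1$, so $\psi'_{\alpha,s,p,q,\gamma}(m)=\sum_u(\gamma_{u,\alpha,p,q})^{\kappa}$ does not involve $m$ at all. A bound uniform in $s$ is then exactly the finiteness of $\sum_u|u|!\prod_{j\in u}a_j$ over all finite $u\subseteq\mathbb N$.

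The missing ingredient is this: for every $c>0$, $\sum_u c^{|u|}|u|!\prod_{j\in u}a_j<\infty$ over finite $u\subseteq\mathbb N$ whenever $\sum_j a_j<\infty$. The paper splits $\psi'$ via $(1+x)^{\lambda}\le 2+x$ into a Hickernell-type product and this sum, and takes the finiteness of the sum from \cite[Theorem~2]{Pan2026Sharp}. Your own head/tail tool also proves it. Since $|u|!=\int_0^\infty t^{|u|}e^{-t}\rd t$, Tonelli's theorem gives
\[
\sum_{u}c^{|u|}|u|!\prod_{j\in u}a_j=\int_0^\infty e^{-t}\prod_{j\ge1}(1+c\,a_jt)\rd t .
\]
Choose $J$ with $c\sum_{j>J}a_j\le 1/2$. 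Then the integrand is bounded by $(1+cAt)^Je^{-t/2}$, which is integrable with a bound depending only on $c$, $A$ and $J$, hence independent of $s$. With this lemma added, the rest of your argument goes through.
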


\begin{proof}
	It suffices to show that, for every $\varepsilon>0$, $\psi_{\alpha,s,p,q,\gamma}(m)$ and $\psi'_{\alpha,s,p,q,\gamma}(m)$ are bounded by $C_{\varepsilon,\Upsilon,\alpha,p,q} b^{\varepsilon m}$ with $C_{\varepsilon,\Upsilon,\alpha,p,q}$ independent of $s$ and $m$. 
    
    Substituting \eqref{eqn:tractability} into \eqref{eqn:psim} yields
    \begin{equation}\label{eqn:tractabilitysum}
        \begin{aligned}
             \psi_{\alpha,s,p,q,\gamma}(m)\leq &\sum_{\emptyset\ne u\subseteq1{:}s}
	\left(\frac{b-1}{b}\right)^{|u|-1}
		\frac{m^{|u|-1}}{(|u|-1)!} 
	|u|!\prod_{j\in u}\Upsilon^{\frac{1}{\alpha_p+1/2}}_j\\
    \overset{(1)}{\leq}& \frac{1}{m} \sum_{u\subseteq 1{:}s}
	\prod_{j\in u}\left(2m\Upsilon^{\frac{1}{\alpha_p+1/2}}_j\right)\leq \frac{1}{m} \prod_{j=1}^\infty \left(1+2m\Upsilon^{\frac{1}{\alpha_p+1/2}}_j\right),
        \end{aligned}
    \end{equation}
where (1) uses $|u|\leq 2^{|u|}$. Since $\Upsilon^{\frac{1}{\alpha_p+1/2}}_j$ is summable, \cite[Lemma 3]{HICKERNELL2003286} proves the claim.

Next, substituting \eqref{eqn:tractability} into \eqref{eqn:psim2} yields
\begin{align*}
    \psi_{\alpha,s,p,q,\gamma}'(m)\leq &\sum_{\emptyset\ne u\subseteq1{:}s}C_{\alpha_p,q}^{|u|}\left(1+\frac{ m^{|u|-1}}{(|u|-1)!}\right)^{(1-2/q)_+} |u|!\prod_{j\in u}\Upsilon^{\frac{1}{\alpha_p+1/2}}_j\\
    \overset{(2)}{\leq} &\frac{1}{m} \prod_{j=1}^\infty \left(1+2C_{\alpha_p,q}m\Upsilon^{\frac{1}{\alpha_p+1/2}}_j\right)+2\sum_{\substack{\emptyset\neq u\subseteq \natu\\|u|<\infty}} C_{\alpha_p,q}^{|u|} |u|!\prod_{j\in u}\Upsilon^{\frac{1}{\alpha_p+1/2}}_j,
\end{align*}
where (2) uses $|a|^\lambda\leq 1+|a|$ for $\lambda=(1-2/q)_+\in [0,1]$. The desired bound follows by applying \cite[Lemma 3]{HICKERNELL2003286} to the first term and \cite[Theorem 2]{Pan2026Sharp} to the second term.
\end{proof}

\subsection{Error bounds for integrands with $\mathcal H_{\mathrm{wav},\alpha,s,p,q}$ derivatives}\label{subsec:Haarderiv}

The Haar wavelet space $\mathcal H_{\mathrm{wav},\alpha,s,p,q}$ is well suited for characterizing fractional smoothness \cite{gnewuch2026qmc}. To treat weakly differentiable integrands, we define, for $r\ge 1$,
$$\mathcal H_{\mathrm{mix},r,\alpha,s,p,q}=\left\{f\in W_{\mathrm{mix}}^{r,1}:f^{(\bm\tau)}\in \mathcal H_{\mathrm{wav},\alpha,s,p,q}\ \forall \bm\tau\in \{0{:}r\}^s\right\}.$$
We equip the space with the norm
$$\|f\|_{\mathrm{mix},r,\alpha,s,p,q}=
	\left(\sum_{\bm\tau\in\{0{:}r\}^s}
	\left\|
	f^{(\bm\tau)}
	\right\|^q_{\mathrm{wav},\alpha,s,p,q}\right)^{1/q},$$
We assume two cases: either $\alpha=0$ and $p=q=2$, in which case $\mathcal H_{\mathrm{wav},\alpha,s,p,q}=L^2$ and the space coincides with $W_{\mathrm{mix}}^{r,2}$; or $\alpha_p=\alpha-\left(1/p-1/2\right)_+>0$, in which case the space embeds into $W_{\mathrm{mix}}^{r,2}$ (see Lemma 2.6).

Our analysis follows the framework in Section~\ref{subsec:Haarintegrands}. Let $f_u$ be the ANOVA component of $f$ corresponding to the subset $u$. By \eqref{eqn:fudef}, for every $\bm\tau\in\{0{:}r\}^s$ with $\supp(\bm\tau)=u$, the weak derivative $f_u^{(\bm\tau_u)}$ satisfies
\begin{equation*}
f_u^{(\bm\tau_u)}(\bm x_u)
=
\int_{(0,1)^{s-|u|}}
f^{(\bm\tau)}(\bm x_u,\bm x_{-u})\,d\bm x_{-u}.
\end{equation*}
Hence,
\[
\widehat{f_u^{\,(\bm\tau_u)}}(\bsh_u)=\widehat{f^{\,(\bm\tau)}}(\bsh)
\quad\text{for } \bsh_u\in\mathbb N_0^{|u|},\ \bsh=(\bsh_u,\bm 0_{-u}).
\]
Lemma~\ref{lem:HaarWalshBlock} then implies that $Q_{\bm j_u}f^{(\bm\tau_u)}_u=Q_{\bm j}f^{(\bm\tau)}$ if $\bm \tau_u\in \{1{:}r\}^{|u|}$, $\bm \tau=(\bm \tau_u,\bm 0_{-u})$, and $\bm j=(\bm j_u,\bm 0_{-u})$. An argument similar to \eqref{eqn:fuleqf} shows
\[
\|f_u\|_{\mathrm{mix}^*,r,\alpha,|u|,p,q}
:=
\left(
\sum_{\bm\tau_u\in\{1{:}r\}^{|u|}}
\left\|
f_u^{(\bm\tau_u)}
\right\|_{\mathrm{wav},\alpha,|u|,p,q}^q
\right)^{1/q}
\le
\|f\|_{\mathrm{mix},r,\alpha,s,p,q}.
\]
Although $\|\cdot\|_{\mathrm{mix}^*,r,\alpha,|u|,p,q}$ defines only a semi-norm in general, the ANOVA component $f_u$ satisfies the additional zero-mean condition
\[
\int_0^1 f_u(\bsx_u)\,dx_j=0 \quad\text{for every } j\in u,
\]
so $\|f_u\|_{\mathrm{mix}^*,r,\alpha,|u|,p,q}=0$ implies $f_u=0$ by the Poincaré inequality (see \cite{effdimsobononper}).

We next introduce the frequency sets used in the error analysis, analogous to $K_u(T)$ from Section~\ref{subsec:Haarintegrands}. For nonempty \(u\subseteq 1{:}s\) and \(T\in\mathbb R\), define
\begin{equation*}
K_{u,r,\alpha_p}(T):=\{\bm h\in\mathbb N_0^{s}:\supp(\bsh)=u,\;
(1-\alpha_p)\mu_r(\bm h)+\alpha_p\mu_{r+1}(\bm h)\le T\},
\end{equation*}
where $\mu_r(\bm h)$ and $\mu_{r+1}(\bm h)$ are as in \eqref{eqn:bsmu}.
The following lemma bounds the cardinality of \( K_{u,r,\alpha_p}(T) \). Its proof is given in Appendix~\ref{app:lem:Ku-size-deriv}.

\begin{lemma}\label{lem:Ku-size-deriv}
For nonempty \(u\subseteq 1{:}s\), \(r\in\mathbb N\), $\alpha_p\geq 0$, \(T\in\mathbb R\), and \(T_+:=\max(T,0)\), there is a constant $C_{r,\alpha_p}$ depending on $r$ and $\alpha_p$ such that
 $$|K_{u,r,\alpha_p}(T)|\leq C^{|u|}_{r,\alpha_p} b^{T/(r+\alpha_p)} \frac{ (T_+)^{|u|-1}}{(|u|-1)!}.$$
\end{lemma}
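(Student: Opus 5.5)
The plan is to reduce the count to a per-coordinate generating-function estimate and then mimic the proof of Lemma~\ref{lem:Ku-size}. Fix $u$ with $|u|=d$; if $T<0$ the set is empty, so assume $T\ge 0$. For each $j\in u$ set $w(h_j):=(1-\alpha_p)\mu_r(h_j)+\alpha_p\mu_{r+1}(h_j)$, so that membership in $K_{u,r,\alpha_p}(T)$ means $\sum_{j\in u}w(h_j)\le T$ with every $h_j\ge1$. First, for a single coordinate I would count $N(t):=|\{h\in\mathbb N: w(h)\le t\}|$. Write $h$ in sparse form with nonzero digit positions $a_1>\cdots>a_\nu$. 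If $\nu\le r$, then $\mu_r=\mu_{r+1}=a_1+\dots+a_\nu$. If $\nu>r$, then $w=\sum_{i\le r}a_i+\alpha_p a_{r+1}$. The remaining positions $a_{r+2},\dots$ are free, giving at most $b^{a_{r+1}}$ choices (including the digits). The top $r+1$ positions each carry at most $b$ choices of digit.

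The key observation is that, since the positions are strictly decreasing, $\sum_{i\le r}a_i\ge r a_{r+1}$. Hence $a_{r+1}\le w/(r+\alpha_p)$ whenever $\alpha_p\le1$. For $\alpha_p>1$ the weight $(1-\alpha_p)$ is negative, but then $\mu_r\ge r\mu_{r+1}/(r+1)$ still allows a comparison, and I would handle that case separately or note it only enlarges the constant. The plan is to bound the number of $h$ with $w(h)\in[n,n+1)$ by $C_{r,\alpha_p}b^{n/(r+\alpha_p)}$. The free low digits contribute $b^{a_{r+1}}\le b^{w/(r+\alpha_p)}$. The number of choices of the top positions $a_1>\dots>a_r>a_{r+1}$ with the weighted sum in a unit window must be summed against $b^{a_{r+1}}$. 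This sum is geometric in $a_{r+1}$ and dominated by the largest admissible $a_{r+1}$, where the top positions are forced close to $a_{r+1}$: the slack $\sum_{i\le r}(a_i-a_{r+1})$ must be small. Each unit of slack lowers the maximal $a_{r+1}$ by $1/(r+\alpha_p)$, costing a factor $b^{-1/(r+\alpha_p)}$. The number of slack configurations grows only polynomially, so summing over slack gives a convergent series and a constant depending on $r,\alpha_p$. The case $\nu\le r$ contributes at most polynomially many terms times $b^{\nu}$, with $w\ge \nu(\nu+1)/2$, so it is absorbed into the constant. This per-coordinate bound, $|\{h:w(h)\in[n,n+1)\}|\le C b^{n/(r+\alpha_p)}$, is the main obstacle. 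I expect the bookkeeping of the slack sum, and the case $\alpha_p>1$, to be where care is needed.

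Second, I combine coordinates exactly as in Lemma~\ref{lem:Ku-size}. Partition by the integer parts $n_j=\lfloor w(h_j)\rfloor$, with $n_j\ge1$ since $w(h)\ge1$ for $h\ge1$ when $\alpha_p\le 1$ (else adjust by the minimal positive value). The condition $\sum w(h_j)\le T$ implies $\sum n_j\le T$. Hence
\[
|K_{u,r,\alpha_p}(T)|\le C^{d}\sum_{n=d}^{\lfloor T\rfloor}\binom{n-1}{d-1}b^{n/(r+\alpha_p)}
\le C^{d}\frac{T^{d-1}}{(d-1)!}\sum_{n=0}^{\lfloor T\rfloor}b^{n/(r+\alpha_p)}.
\]
Finally, the geometric sum is at most $b^{T/(r+\alpha_p)}/(1-b^{-1/(r+\alpha_p)})$. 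Absorbing this factor into $C_{r,\alpha_p}^{|u|}$ gives the claim.
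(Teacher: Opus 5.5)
Your plan is correct and is essentially the paper's proof. The paper also bounds the one-dimensional count by $C_0 b^{L/(r+\alpha_p)}$: it writes $S(h)=\mu_r(h)+\alpha_p a_{r+1}\ge (r+\alpha_p)a_{r+1}$ plus slack, allows at most $b^{a_{r+1}}$ choices of low digits and polynomially many top-position configurations, and sums over $L'=L-(r+\alpha_p)a_{r+1}$, which is your slack sum. It then combines coordinates through compositions exactly as in Lemma~\ref{lem:Ku-size}; your partition by $\lfloor w(h_j)\rfloor$ handles the non-integer weights slightly more cleanly than the paper does.

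The case split at $\alpha_p=1$ is unnecessary. Your own identity $w(h)=\sum_{i\le r}a_i+\alpha_p a_{r+1}$ already gives $a_{r+1}\le w(h)/(r+\alpha_p)$ and $w(h)\ge a_1\ge 1$ for every $\alpha_p\ge 0$. Any cruder comparison that discards the $\alpha_p a_{r+1}$ term when $\alpha_p>1$ would change the exponent $T/(r+\alpha_p)$, not merely the constant, so do not take that route.
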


The next lemma bounds the sum of squared Walsh coefficients outside \(K_{u,r,\alpha_p}(T)\). Its proof is technical and deferred to Appendix~\ref{app:liu-block-proof}.

\begin{lemma}\label{lem:walsh-tail-outside-Ku-deriv}
Let $r\in\natu$, $u\subseteq1{:}s$ be nonempty, $T\in\mathbb R$, and \(T_+:=\max(T,0)\). 
Then for $f\in\mathcal H_{\mathrm{mix},r,\alpha,s,p,q}$ with ANOVA components $f_u$, 
\begin{align*}
\sum_{\substack{\bm h\in\mathbb N_0^{s}\setminus K_{u,r,\alpha_p}(T)\\\supp(\bsh)=u}}
	|\widehat f(\bm h)|^2
\le
C_{r,\alpha_p,q}^{|u|}
\mathfrak P_{|u|,r,q}(T)b^{-2T}
\|f_u\|_{\mathrm{mix}^*,r,\alpha,|u|,p,q},
\end{align*}
where \(C_{r,\alpha_p,q}\) is a constant depending on $r,\alpha_p$, and $q$, and 
\begin{equation*}
 \mathfrak P_{|u|,r,q}(T):=
\mathcal E_{r|u|}(T_+)
\left(\mathcal E_{|u|-1}(T_+)\right)^{(1-2/q)_+},\quad  \mathcal  E_n(x)=\sum_{\nu=0}^n\frac{x^\nu}{\nu!}.   
\end{equation*}
\end{lemma}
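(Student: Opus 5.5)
The plan is to reduce to the ANOVA component, prove a one-dimensional identity that trades decay of $\widehat f(h)$ for Walsh coefficients of a derivative, tensorize it, and then sum blockwise as in Lemma~\ref{lem:walsh-tail-outside-Ku}. The bound I will aim for carries $\|f_u\|^2_{\mathrm{mix}^*,r,\alpha,|u|,p,q}$, the square that homogeneity forces; the unsquared norm in the statement looks like a typo.

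\emph{Reduction.} Since $\widehat f(\bm h)=\widehat{f_u}(\bm h_u)$ whenever $\supp(\bm h)=u$, the left-hand side equals $\sum|\widehat{f_u}(\bm h_u)|^2$. This sum runs over $\bm h_u\in\natu^{|u|}$ with $(1-\alpha_p)\mu_r(\bm h_u)+\alpha_p\mu_{r+1}(\bm h_u)>T$. So it suffices to work with $f_u$ on $(0,1)^{|u|}$. Write $h_j=\sum_{i=1}^{\nu_j}c_{j,i}b^{a_{j,i}-1}$ and set $\tau_j:=\min(r,\nu_j)\in\{1{:}r\}$. This choice is admissible in the $\mathrm{mix}^*$ seminorm because every $h_j\neq0$, so $\nu_j\ge1$.

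\emph{One-dimensional identity (key step).} For smooth $g$ on $[0,1)$ and $h$ with $\nu$ nonzero digits, I will integrate by parts $\tau=\min(r,\nu)$ times. Each step replaces $\operatorname{wal}_h$ by its antiderivative $J_h(x)=\int_0^x\operatorname{wal}_h$. The classical Walsh expansion of $J_h$ (Fine; Dick's decay analysis in \cite{dick:pill:2010}) is supported on two kinds of indices. The first is $h$ with its top digit deleted, with coefficient of order $b^{-a_1}$. The second is indices $h\oplus \kappa b^{a_1+c-1}$ obtained by adding a higher digit, with coefficients of order $b^{-a_1-c}$. Boundary terms vanish because $f_u$ has zero marginal means, which is the same reason the seminorm is a norm here.

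Iterating $\tau$ times gives
\[
\widehat g(h)=\sum_{h'}\rho(h,h')\,\widehat{g^{(\tau)}}(h').
\]
The weights satisfy $|\rho(h,h')|\le C_r\, b^{-\mu_\tau(h)-\Sigma(h,h')}$, where $\Sigma\ge0$ records the extra digit shifts. In the dominant branch $h'$ is $h$ with its top $\tau$ digits removed. If $\nu>r$, that branch has $\mu_1(h')=a_{r+1}$, hence $\mu_\tau(h)+\alpha_p\mu_1(h')=(1-\alpha_p)\mu_r(h)+\alpha_p\mu_{r+1}(h)$, which is exactly the quantity defining $K_{u,r,\alpha_p}$. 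If $\nu\le r$, then $h'=0$ in that branch and only the shifted branches contribute; there $\mu_{r+1}(h)=\mu_r(h)$ and the factor $b^{-2\mu_r(h)}$ already suffices. Tensorizing over $j\in u$ gives $\widehat{f_u}(\bm h_u)$ as a combination of $\widehat{f_u^{(\bm\tau_u)}}(\bm h'_u)$.

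\emph{Summation.} I will apply Cauchy--Schwarz to the finitely many branches, giving a factor $C^{|u|}$, and then exchange sums. For fixed $\bm h'_u$ in the block $\Lambda_{\bm\ell}$, I count the preimages $\bm h_u$ outside $K_{u,r,\alpha_p}(T)$. There are up to $r$ removed digits per coordinate, with free positions above $\ell_j$ and free values, weighted by $b^{-2\mu_\tau}$. Subject to the constraint $>T$, this sum is bounded by $C^{|u|}\mathcal E_{r|u|}(T_+)\,b^{-2T}\,b^{2\alpha_p|\bm\ell|_1}$ per block. The factorial series $\mathcal E_{r|u|}$ comes from counting at most $r|u|$ ordered positions whose total lies in a window. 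Lemma~\ref{lem:HaarWalshBlock} turns the inner sums into $\|Q_{\bm\ell}f_u^{(\bm\tau_u)}\|_{L^2}^2$. For $q\le2$, Lemma~\ref{lem:HinL2} and \eqref{eqn:Hnormpq} finish the argument. For $q>2$, I will use Hölder over $\bm\ell$ exactly as in \eqref{eqn:LqLq'}; this contributes the factor $(\mathcal E_{|u|-1}(T_+))^{1-2/q}$ from counting blocks with $|\bm\ell|_1\approx T$. Finally I sum over $\bm\tau_u\in\{1{:}r\}^{|u|}$, which costs at most $r^{|u|}$.

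\emph{Main obstacle.} The hardest step is the one-dimensional identity with explicit, geometric control of the shifted branches. Those branches must be summable, with constants depending only on $r$, and must not destroy the exponent $(1-\alpha_p)\mu_r+\alpha_p\mu_{r+1}$. I would first try Dick's explicit formula for the Walsh coefficients of $J_h$ and bound $\Sigma$ generously, absorbing the $\alpha_p$ loss into $b^{-\Sigma}$. This requires $\alpha_p<1$ implicitly, or otherwise using $\alpha_p$ only through $\min(\alpha_p,1)$.
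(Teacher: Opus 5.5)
Your reduction to $f_u$, the choice $\tau_j=\min(r,\nu_j)\in\{1{:}r\}$, and your reading of the right-hand side are correct. The paper's proof ends with $\sum_{\bm\tau_u}\|f_u^{(\bm\tau_u)}\|_{\mathrm{wav},\alpha,|u|,p,q}^2\le r^{|u|}\|f_u\|^2_{\mathrm{mix}^*,r,\alpha,|u|,p,q}$, so the norm should indeed be squared. The gap is in the summation step, which is the heart of the lemma.

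\textbf{The non-dominant branches are not handled.} Iterating Fine's formula gives infinitely many $\bm h'$ for each $\bm h$, so Cauchy--Schwarz has to be weighted by $|\rho(\bm h,\cdot)|$, not taken over finitely many branches. These are $\bm h'=\bm h_r^-\oplus\bm k$, where $\bm k$ may carry digits anywhere at or above the lowest digit of $\bm h_r^+$, including digits inserted between the removed ones; for $b\ge3$ the one-step formula also keeps a term at $h$ itself. Your preimage count (``up to $r$ removed digits per coordinate, free positions above $\ell_j$'') covers only the dominant branch. For fixed $\bm h'$ you must also sum over every splitting of $\bm h'$ into a lower part and an upper part reachable from some $\bm h_r^+$. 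This is exactly what you leave open.

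\textbf{The stated block weight fails for $q>2$.} A weight proportional to $b^{-2T}b^{2\alpha_p|\bm\ell|_1}$ cannot work there, because $\sum_{\bm\ell}b^{2\alpha_p|\bm\ell|_1}\|Q_{\bm\ell}g\|_{L^2}^2$ need not be finite when only $\|g\|_{\mathrm{wav},\alpha,|u|,p,q}<\infty$. In your H\"older step the dual factor becomes $(\sum_{\bm\ell}1)^{1/q'}$. The weight must also decay once $|\bm\ell|_1\gtrsim T$, which requires tracking that removed digits lie above $\bm\ell$ and that added top digits cost $b^{-\Sigma}$.

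\textbf{The restriction $\alpha_p<1$ is unnecessary.} Every branch satisfies $\Vec{\bm\mu}_1(\bm h')\ge\Vec{\bm\mu}_1(\bm h_r^-)$, so $\bm h\notin K_{u,r,\alpha_p}(T)$ already gives $\mu_r(\bm h)>T-\alpha_p|\Vec{\bm\mu}_1(\bm h')|_1$ for every $\alpha_p$. Your fallback of replacing $\alpha_p$ by $\min(\alpha_p,1)$ would change the set $K_{u,r,\alpha_p}(T)$ and would not yield the stated bound.

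\textbf{Two smaller errors.} First, the boundary terms vanish because the current Walsh index is nonzero, i.e.\ \eqref{eq:integrated-walsh-boundary} for $1\le n\le\nu(h)$; this is why $\tau$ is capped at $\nu$. It has nothing to do with the zero marginals of $f_u$. Second, for $\nu\le r$ the dominant branch does contribute, through $\widehat{g^{(\tau)}}(0)=\int_0^1 g^{(\tau)}$. This is nonzero, e.g., for $g(x)=x-1/2$, though your bound $b^{-2\mu_r(h)}\le b^{-2T}$ still covers it.

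\textbf{How the paper avoids this.} The paper never expands $W$. It keeps the exact identity $\widehat f(\bm h)=\pm\int_{(0,1)^{|u|}}f_u^{(\bm\tau_u)}\,\overline{\operatorname{wal}_{\bm h_{u,r}^-}}\,W(\bm h_{u,r}^+)\,d\bm x_u$ from Lemma~\ref{lem:suzuki-yoshiki-sobolev}. It groups $\bm h$ by its upper part $\bm k_r^+=\bm h_r^+$ and its level $\bm\ell=\Vec{\bm\mu}_1(\bm h_r^-)$. By Parseval, each block sum becomes $\|Q_{\bm\ell_u}(f_u^{(\bm\tau_u)}W(\bm k_{u,r}^+))\|_{L^2}^2$. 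Since $W(\bm k_{u,r}^+)$ is $b^{-\ell_j}$-periodic for admissible $\bm\ell$, multiplying by it cannot move Haar content to lower levels (Lemma~\ref{lem:periodic-W-filters-haar}). Everything your shifted branches encode is thus compressed into the single factor $\|W(\bm k_{u,r}^+)\|_{L^\infty}^2\le((1+\pi)/2)^{2|u|}b^{2\sigma-2M}$, with $\sigma=|\bm\tau_u|_1$ and $M=\mu_r(\bm k_r^+)$.

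Upper parts with $M\le T$ then use the Haar tail $|\bm\ell|_1\ge(T-M)/\alpha_p$, with H\"older applied separately for each upper part; this is the source of the factor $\mathcal E_{|u|-1}$. Upper parts with $M>T$ need only $\|f_u^{(\bm\tau_u)}\|_{L^2}$ and \eqref{eqn:L2embedding}. Your branch-by-branch route may well be completable along the lines above, but as written it is missing the estimate that actually proves the lemma.
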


\begin{theorem}\label{thm:median-weighted-probability2}
For nonzero $f\in\mathcal H_{\mathrm{mix},r,\alpha,s,p,q}$ with ANOVA components $f_u$, define
\[
\gamma_{u,r,\alpha,p,q}
:=
\frac{\|f_u\|_{\mathrm{mix}^*,r,\alpha,|u|,p,q}}
{\|f\|_{\mathrm{mix},r,\alpha,s,p,q}}.
\]
If $\mathcal S$ is a $(\log n)^{1+\beta}$-reduced GNS with $\beta>0$,
then for $\delta\in(0,1/8)$ and odd $R\in\natu$,
\begin{equation*}
\mathbb P\left\{
\left|{\mathcal M}_{m,R,\mathcal S}(f)-I(f)\right|
>
\frac{\Psi_{r,\alpha,s,p,q,\gamma,\beta}(m)}
{(b^m\delta)^{r+\alpha_p+1/2}}
\|f\|_{\mathrm{mix},r,\alpha,s,p,q}
\right\}
\le
\frac12(8\delta)^{\frac{R+1}{2}},
\end{equation*}
where, with $C_\beta$ as in Lemma~\ref{cor:dimension-independent-gain}, with $C_{r,\alpha_p}$ as in Lemma~\ref{lem:Ku-size-deriv}, and with $C_{r,\alpha_p,q}$ and $\mathfrak P_{|u|,r,q}$ as in Lemma~\ref{lem:walsh-tail-outside-Ku-deriv},
\begin{align*}
 \Psi_{r,\alpha,s,p,q,\gamma,\beta}(m)
 &=
\left(1+C_\beta\psi_{r,\alpha,s,p,q,\gamma}(m)\right)^{r+\alpha_p}
\left(C_\beta\psi'_{r,\alpha,s,p,q,\gamma}(m)\right)^{1/2}, \\
\psi_{r,\alpha,s,p,q,\gamma}(m)
&=
\sum_{\emptyset\ne u\subseteq1{:}s}
C_{r,\alpha_p}^{|u|}
\frac{((r+\alpha_p)m)^{|u|-1}}{(|u|-1)!}
(\gamma_{u,r,\alpha,p,q})^{\frac{1}{r+\alpha_p+1/2}},\\
\psi'_{r,\alpha,s,p,q,\gamma}(m)
&=
\sum_{\emptyset\ne u\subseteq1{:}s}
C_{r,\alpha_p,q}^{|u|}
\mathfrak P_{|u|,r,q}((r+\alpha_p)m)
(\gamma_{u,r,\alpha,p,q})^{\frac{1}{r+\alpha_p+1/2}}.
\end{align*}
\end{theorem}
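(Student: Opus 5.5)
The plan is to mirror the proof of Theorem~\ref{thm:median-weighted-probability}, replacing $K_u(T)$ by $K_{u,r,\alpha_p}(T)$, Lemma~\ref{lem:Ku-size} by Lemma~\ref{lem:Ku-size-deriv}, and Lemma~\ref{lem:walsh-tail-outside-Ku} by Lemma~\ref{lem:walsh-tail-outside-Ku-deriv}. The thresholds must be rescaled: the counting bound grows like $b^{T/(r+\alpha_p)}$ and the tail bound decays like $b^{-2T}$, so the natural variable is $T/(r+\alpha_p)$. Let $\mathcal U=\{\emptyset\ne u\subseteq1{:}s:\gamma_{u,r,\alpha,p,q}>0\}$, write $\psi=\psi_{r,\alpha,s,p,q,\gamma}(m)$, and for $u\in\mathcal U$ set
\[
T_u=(r+\alpha_p)\Bigl(m+\log_b\delta-\log_b(1+C_\beta\psi)+\tfrac{\log_b\gamma_{u,r,\alpha,p,q}}{r+\alpha_p+1/2}\Bigr).
\]
Since $\delta<1$, $\gamma_{u,r,\alpha,p,q}\le1$ and $1+C_\beta\psi\ge1$, we have $(T_u)_+\le(r+\alpha_p)m$. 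This is exactly why $\psi$ and $\psi'$ are evaluated at $(r+\alpha_p)m$.

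\textbf{Step 1: the good event has high probability.} Let $G(\bm M)$ be the event that $K_{u,r,\alpha_p}(T_u)\cap D^\perp_{\bm C,m}(\bm M)=\emptyset$ for all $u\in\mathcal U$. A union bound with Lemma~\ref{lem:dualnetbound} and Lemma~\ref{lem:Ku-size-deriv} gives
\[
\mathbb P(G^c)\le C_\beta b^{-m}\sum_{u\in\mathcal U}C_{r,\alpha_p}^{|u|}\,b^{T_u/(r+\alpha_p)}\,\frac{((r+\alpha_p)m)^{|u|-1}}{(|u|-1)!}.
\]
Substituting $b^{T_u/(r+\alpha_p)}=b^m\delta\,(1+C_\beta\psi)^{-1}\gamma_u^{1/(r+\alpha_p+1/2)}$ yields $\mathbb P(G^c)\le\delta C_\beta\psi/(1+C_\beta\psi)\le\delta$.

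\textbf{Step 2: second-moment bound on $G$.} Condition on $\bm M$ and apply Lemma~\ref{thm:walsh-error-scrambled-net}. Coefficients with support outside $\mathcal U$ vanish, so on $G$ only frequencies outside $K_{u,r,\alpha_p}(T_u)$ survive. Lemma~\ref{lem:dualnetbound} then gives
\[
\mathbb E\bigl[|\mathcal I_{m,\mathcal S}(f)-I(f)|^2\mathbf 1_G\bigr]\le C_\beta b^{-m}\sum_{u\in\mathcal U}\sum_{\bm h\notin K_{u,r,\alpha_p}(T_u),\,\supp(\bm h)=u}|\widehat f(\bm h)|^2.
\]
Lemma~\ref{lem:walsh-tail-outside-Ku-deriv} bounds each inner sum by $C_{r,\alpha_p,q}^{|u|}\mathfrak P_{|u|,r,q}(T_u)b^{-2T_u}$ times the norm of $f_u$. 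I read that lemma's norm as squared; if it is not, a homogeneity check (rescaling $f$) fixes this. Since $\mathfrak P$ is monotone in $T_+$, we have $\mathfrak P(T_u)\le\mathfrak P((r+\alpha_p)m)$. The exponential factor is
\[
b^{-2T_u}=(b^m\delta)^{-2(r+\alpha_p)}(1+C_\beta\psi)^{2(r+\alpha_p)}\gamma_u^{-2(r+\alpha_p)/(r+\alpha_p+1/2)}.
\]
Combining with $\|f_u\|^2=\gamma_u^2\|f\|^2_{\mathrm{mix},r,\alpha,s,p,q}$ leaves the power $\gamma_u^{1/(r+\alpha_p+1/2)}$. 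Summing over $u$ gives
\[
\mathbb E\bigl[|\mathcal I_{m,\mathcal S}(f)-I(f)|^2\mathbf 1_G\bigr]\le\frac{C_\beta\psi'(1+C_\beta\psi)^{2(r+\alpha_p)}}{b^{m(2r+2\alpha_p+1)}\delta^{2(r+\alpha_p)}}\,\|f\|^2_{\mathrm{mix},r,\alpha,s,p,q},
\]
where $\psi'=\psi'_{r,\alpha,s,p,q,\gamma}(m)$.

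\textbf{Step 3: conclusion.} Markov's inequality on $G$ at the threshold $\Psi_{r,\alpha,s,p,q,\gamma,\beta}(m)(b^m\delta)^{-(r+\alpha_p+1/2)}\|f\|_{\mathrm{mix},r,\alpha,s,p,q}$ contributes probability at most $\delta$. The extra factor $\delta^{-1}$ in the threshold's square is what absorbs this. Adding $\mathbb P(G^c)\le\delta$ gives failure probability at most $2\delta$ for a single replicate. Lemma~\ref{lem:median-amplification} with $\eta=2\delta$ then gives the median bound $\tfrac12(8\delta)^{(R+1)/2}$.

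The main obstacle is purely bookkeeping: choosing $T_u$ so that the counting and tail exponents both cancel exactly, and verifying $(T_u)_+\le(r+\alpha_p)m$. The substantive analytic work is already contained in Lemmas~\ref{lem:Ku-size-deriv} and~\ref{lem:walsh-tail-outside-Ku-deriv}.
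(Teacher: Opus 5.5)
Your proposal is correct and matches the paper's proof almost verbatim. It uses the same thresholds $T_u$, the same good event $G(\bm M)$, the bound $(T_u)_+\le (r+\alpha_p)m$, and substitutes Lemmas~\ref{lem:Ku-size-deriv} and~\ref{lem:walsh-tail-outside-Ku-deriv} into the argument of Theorem~\ref{thm:median-weighted-probability}. Your reading of the norm in Lemma~\ref{lem:walsh-tail-outside-Ku-deriv} as squared is right: the lemma's own proof ends with $\|f_u\|^2_{\mathrm{mix}^*,r,\alpha,|u|,p,q}$, so the missing square in its statement is a typo.
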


\begin{proof}
The proof parallels that of Theorem~\ref{thm:median-weighted-probability}, with the following modifications: we set, 
for every $u\in \mathcal U= \{\emptyset\neq u\subseteq 1{:}s : \gamma_{u,r,\alpha,p,q}>0\}$,
\[
T_u
=
(r+\alpha_p)\left(
m+\log_b\delta
-\log_b(1+C_\beta\psi_{r,\alpha,s,p,q,\gamma}(m))
+\frac{\log_b\gamma_{u,r,\alpha,p,q}}
{r+\alpha_p+1/2}
\right),
\]
and define
\[
G(\bm M)
=
\left\{
K_{u,r,\alpha_p}(T_u)\cap D_{\bm C,m}^{\perp}(\bm M)=\emptyset \ \forall
u\in \mathcal U
\right\}.
\]
The modified proof is then completed by bounding \((T_u)_+\le(r+\alpha_p)m\) and replacing Lemmas~\ref{lem:Ku-size} and~\ref{lem:walsh-tail-outside-Ku} with Lemmas~\ref{lem:Ku-size-deriv} and~\ref{lem:walsh-tail-outside-Ku-deriv}, respectively.
\end{proof}

Theorem~\ref{thm:median-weighted-probability2} shows that, with failure probability decaying
exponentially in $R$,
\[
|{\mathcal M}_{m,R,\mathcal S}(f)-I(f)|
=
O\!\left(
m^{\{r+\alpha_p+(1/2-1/q)_+\}(s-1)+rs/2}
b^{-(r+\alpha_p+1/2)m}
\right).
\]
Dimension-independent error bounds can be obtained under the following assumptions on $\gamma_{u,r,\alpha,p,q}$. The proof is given in Appendix~\ref{app:cor:deriv}.

\begin{corollary}\label{cor:deriv}
Under the assumptions of
Theorem~\ref{thm:median-weighted-probability2}, assume that there is a
nonnegative sequence $\{\Upsilon_j\}_{j\ge1}$ such that
\begin{equation}\label{eqn:tractability2}
\sum_{j=1}^\infty
\Upsilon_j^{\frac{1}{r+\alpha_p+1/2}}<\infty
\quad \text{and} \quad  
\gamma_{u,r,\alpha,p,q}
\le
(|u|!)^{r+\alpha_p+1/2}\prod_{j\in u}\Upsilon_j \ \forall\emptyset\ne u\subseteq1{:}
\end{equation}
for every nonempty $u\subseteq1{:}s$. Then, for every $\varepsilon>0$,
there is a constant $C_{\varepsilon,\Upsilon,r,\alpha,p,q,\beta}$,
independent of $s$ and $m$, such that
\begin{equation*}
\mathbb P\left\{
\left|{\mathcal M}_{m,R,\mathcal S}(f)-I(f)\right|
>
\frac{C_{\varepsilon,\Upsilon,r,\alpha,p,q,\beta}}
{b^{(r+\alpha_p+1/2-\varepsilon)m}
\delta^{r+\alpha_p+1/2}}
\|f\|_{\mathrm{mix},r,\alpha,s,p,q}
\right\}
\le
\frac12(8\delta)^{\frac{R+1}{2}}.
\end{equation*}
\end{corollary}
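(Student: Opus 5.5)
The plan mirrors the proof of Corollary~\ref{cor:tractability}. By Theorem~\ref{thm:median-weighted-probability2}, it suffices to show that for every $\varepsilon>0$ both $\psi_{r,\alpha,s,p,q,\gamma}(m)$ and $\psi'_{r,\alpha,s,p,q,\gamma}(m)$ are bounded by $C b^{\varepsilon' m}$, with $C$ independent of $s$ and $m$ and $\varepsilon'$ a suitable multiple of $\varepsilon$. Then
\[
\Psi_{r,\alpha,s,p,q,\gamma,\beta}(m)\le C' b^{\varepsilon'(r+\alpha_p+1/2)m},
\]
and rescaling $\varepsilon$ gives the claim. Throughout, write $\lambda=1/(r+\alpha_p+1/2)$ and $w_j=\Upsilon_j^{\lambda}$, so that $\sum_j w_j<\infty$. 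Assumption \eqref{eqn:tractability2} gives $(\gamma_{u,r,\alpha,p,q})^{\lambda}\le |u|!\prod_{j\in u}w_j$.

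For $\psi_{r,\alpha,s,p,q,\gamma}(m)$, substitute this bound and use $|u|!/(|u|-1)!=|u|\le 2^{|u|}$. Since $m\ge1$, we have $((r+\alpha_p)m)^{|u|-1}\le (1+r+\alpha_p)^{|u|}m^{|u|}/m$. This gives
\[
\psi_{r,\alpha,s,p,q,\gamma}(m)\le \frac1m\prod_{j=1}^\infty\bigl(1+2C_{r,\alpha_p}(1+r+\alpha_p)m\,w_j\bigr).
\]
By \cite[Lemma 3]{HICKERNELL2003286}, any product $\prod_j(1+cm w_j)$ with summable $w_j$ is $O_\varepsilon(b^{\varepsilon m})$, uniformly in $s$. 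This is exactly the argument in \eqref{eqn:tractabilitysum}.

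For $\psi'_{r,\alpha,s,p,q,\gamma}(m)$, the new ingredient is $\mathfrak P_{|u|,r,q}(x)=\mathcal E_{r|u|}(x)\,(\mathcal E_{|u|-1}(x))^{(1-2/q)_+}$ with $x=(r+\alpha_p)m$. I would bound each factor crudely by a pure exponential in $m$ times a harmless constant. First, $\mathcal E_n(x)\le e^{x}$, but $e^{(r+\alpha_p)m}$ is not $O(b^{\varepsilon m})$, so this is too crude. Instead, use $\mathcal E_n(x)\le e^{\eta x}\sum_{\nu\le n}\eta^{-\nu}\le (n+1)\eta^{-n}e^{\eta x}$ for any small $\eta\in(0,1]$, since $x^\nu/\nu!\le \eta^{-\nu}e^{\eta x}$. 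Taking $\eta$ with $e^{2\eta(r+\alpha_p)}\le b^{\varepsilon}$, we get
\[
\mathfrak P_{|u|,r,q}((r+\alpha_p)m)\le (r|u|+1)\,|u|\,\eta^{-(r+1)|u|}\,b^{\varepsilon m}\le (4\eta^{-(r+1)})^{|u|}(r+1)\,b^{\varepsilon m}.
\]
Plugging this in yields
\[
\psi'_{r,\alpha,s,p,q,\gamma}(m)\le (r+1)\,b^{\varepsilon m}\sum_{\emptyset\ne u\subset\natu,\ |u|<\infty}\widetilde C^{|u|}\,|u|!\prod_{j\in u}w_j,
\qquad \widetilde C=4C_{r,\alpha_p,q}\eta^{-(r+1)}.
\]
The remaining sum is finite by \cite[Theorem 2]{Pan2026Sharp}, which handles sums of the form $\sum_u c^{|u|}|u|!\prod_{j\in u}w_j$ with summable $w_j$, exactly as in the second term of the proof of Corollary~\ref{cor:tractability}. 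The same $\eta$-trick could also replace the $1/m$-product argument for $\psi$.

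The main obstacle is the $|u|!$ factor in \eqref{eqn:tractability2}. It is not absorbed by the $1/(|u|-1)!$ in $\psi$ alone once $\mathfrak P$ has been bounded crudely, and in $\psi'$ there is no factorial in the denominator at all. So the argument rests on invoking \cite[Theorem 2]{Pan2026Sharp} correctly, and the $\eta$-dependent constant $\widetilde C$ must be allowed to depend on $\varepsilon$; this is permitted, since the constant $C_{\varepsilon,\Upsilon,r,\alpha,p,q,\beta}$ may depend on $\varepsilon$. I would double-check that the hypotheses of \cite[Theorem 2]{Pan2026Sharp} accept an arbitrary base $\widetilde C^{|u|}$. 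If they do not, the fallback is to shift part of $w_j$: write $w_j=w_j^{1/2}\cdot w_j^{1/2}$ and use the strict tail decay of a summable sequence to absorb $\widetilde C^{|u|}$ for all but finitely many indices $j$.
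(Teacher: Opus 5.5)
Your proposal is correct and essentially coincides with the paper's proof. The paper handles $\psi_{r,\alpha,s,p,q,\gamma}(m)$ by the product argument of \eqref{eqn:tractabilitysum} with \cite[Lemma 3]{HICKERNELL2003286}. For $\psi'_{r,\alpha,s,p,q,\gamma}(m)$ it likewise bounds $b^{-\zeta m}\mathcal E_n((r+\alpha_p)m)$ by a constant to the power $n+1$, using $\sup_{x\ge0}x^\nu e^{-cx}=(\nu/(ce))^\nu$ and $\nu!\ge(\nu/e)^\nu$ (equivalent to your $x^\nu/\nu!\le\eta^{-\nu}e^{\eta x}$), and then applies \cite[Theorem 2]{Pan2026Sharp} to $\sum_u \widetilde C^{|u|}|u|!\prod_{j\in u}\Upsilon_j^{1/(r+\alpha_p+1/2)}$ with an arbitrary, $\varepsilon$-dependent base $\widetilde C$, as it also does in Corollary~\ref{cor:tractability}, so your fallback is not needed.
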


 \begin{remark}
 The tractability condition \eqref{eqn:tractability2} extends those in \cite[Remark 3]{pan2026dimension} and \cite[Remark 4]{goda2026quasi} by incorporating an order-dependent factor \((|u|!)^{r+\alpha_p+1/2}\) into the upper bound for \(\gamma_{u,r,\alpha,p,q}\). Bounds of this type arise naturally in the analysis of QMC methods for partial differential equations with random coefficients (see, e.g., \cite{kuo:nuye:2016}).
 \end{remark}

\section{Numerical results}\label{sec:numerics}

In this section, we numerically compare $(\log n)^2$-reduced Sobol' sequences with full Sobol' sequences under coarse scrambling, referring to them as reduced coarse scrambling (RCS) and full coarse scrambling (FCS), respectively. As benchmarks, we use linearly scrambled (LS) Sobol' sequences \cite{Matousek1998}, the Hankel random design (HRD) \cite{goda2026quasi}, and Monte Carlo (MC). The first four RQMC methods use the median of $R=15$ randomized replicates, each comprising $N=2^m$ points, whereas MC uses the mean of $RN$ independent uniform samples on $(0,1)^s$. Code is available in the
GitHub release\footnote{\url{https://github.com/ckzmath/median-coarse-scrambling-reproducibility/releases/tag/v1.0.0}}.

\subsection{Smooth integrands}

Following \cite{goda2026quasi,goda2024universal,pan2026automatic}, we consider the integrand
\begin{equation*}
  f_c(\bsx)
  =
  \prod_{j=1}^{128}
  \left[
    1+\frac{1}{j^{c+1}}
    \left(
      x_j^c-\frac{1}{1+c}
    \right)
  \right].
\end{equation*}

\begin{figure}[t]
  \centering
  \includegraphics[width=0.95\linewidth]
  {figure/smooth\_boxplot\_c\_0.5\_vs\_1.5\_B\_448\_r\_8.pdf}
  \caption{Boxplots of the absolute integration error for $f_c$ with $c=0.5$ (left) and $c=1.5$ (right). Each boxplot is based on $448$ independent replicates.}
  \label{fig:fc-boxplots}
\end{figure}

Figure~\ref{fig:fc-boxplots} compares the integration error for \(f_c\) with \(c=0.5\) and \(c=1.5\) across the five methods. As \(c\) increases, the four median RQMC methods automatically achieve faster convergence, owing to the improved smoothness and lower effective dimensionality; MC, by contrast, retains its characteristic \(N^{-1/2}\) rate.

Among the RQMC methods, LS performs the best, followed closely by FCS and RCS. This is consistent with the fact that the coordinate ordering matches their relative importance. Since the degree of the base polynomial is lower for earlier coordinates, the projection of Sobol' sequences onto these coordinates is equidistributed in a smaller base. As a result, the ANOVA components corresponding to the more important coordinates are integrated more accurately than those of lesser importance, which explains the observed practical advantage.

\begin{figure}[t]
  \centering
  \includegraphics[width=0.95\linewidth]
  {figure/smooth\_weight\_order\_boxplot\_c\_0.5\_B\_448\_r\_8.pdf}
\caption{Boxplots of the absolute integration error for $f_c$ with $c=0.5$ under
the original (left) and reversed (right) coordinate orderings. 
Each boxplot is based on $448$ independent replicates.}
  \label{fig:fc-ordering}
\end{figure}

Figure~\ref{fig:fc-ordering} reports the error for \(f_c\) with \(c=0.5\) under the original and reversed coordinate orderings, i.e., for \(f_c(x_1,\dots,x_{128})\) and \(f_c(x_{128},\dots,x_1)\). HRD and MC are unaffected by the reversal, as both methods are invariant under coordinate permutations. In contrast, the three Sobol'-based methods deteriorate, because the more important coordinates are now assigned larger-degree base polynomials; LS in particular degrades significantly and loses its leading position.

\subsection{Nonsmooth integrands} To examine the performance of our methods on nonsmooth integrands, we consider the following three examples:
\begin{align*}
	g_1(\bsx)
	&=
	\mathbf{1}
	\left\{
	\sum_{j=1}^{5}x_j^2\leq 1
	\right\},
	\\[1mm]
	g_2(\bm x)
	&=
	\prod_{j=1}^{128}
	\left[
	1+\frac{1}{j^2}
	\left(
	\sqrt{1-2M}
	\exp\!\left(M\Phi^{-1}(x_j)^2\right)-1
	\right)
	\right],
	\qquad M=0.3,
	\\[1mm]
	g_3(\bsx)
	&=
	\prod_{j=1}^{128}
	\left[
	1+\frac{1}{j^2}
	\left(
	\frac{1-\rho}{2^\rho}
	\left|x_j-\frac12\right|^{-\rho}-1
	\right)
	\right],
	\qquad \rho=0.25.
\end{align*}
Here, $\Phi$ denotes the standard normal distribution function.

The function \(g_1\) is a typical indicator function, frequently encountered in financial engineering applications \cite{he2014good,wang2013pricing}. Theorem~\ref{thm:l2-median-baseline} guarantees an RMSE rate of at least \(O(N^{-1/2})\). A refined analysis using the equidistribution property of Sobol' sequences yields the sharper rate \(O(N^{-(1+1/s)/2})=O(N^{-3/5})\) (see \cite{he2015conrate,liu2026randomized} for details).

The function \(g_2\) illustrates integrands with boundary singularities, which arise naturally in Gaussian integrals. With \(M=0.3\) and \(z_j=\Phi^{-1}(x_j)\) standard normal, the function \(\exp(Mz_j^2)\) belongs to \(L^p\) for \(p\in[1,5/3)\) but not for \(p=5/3\). Theorem~\ref{thm:lp-median-bound} then predicts a convergence rate of nearly \( O(N^{-2/5})\).

Finally, \(g_3\) is an \(L^2\) function with interior singularities. Using the base-\(2\) Haar wavelets \(\psi_{i,k}^{\ell}\) from \eqref{formula_Haar}, we expand \(|x-1/2|^{-\rho}\) as
\[
\begin{aligned}
|x-1/2|^{-\rho}
=
\frac{2^{9/4}}{3}
+
\sum_{\ell=1}^{\infty}\sum_{k=0}^{2^{\ell-1}-1}
c_{\ell,k}
\bigl(\psi_{0,k}^{\ell}(x)-\psi_{1,k}^{\ell}(x)\bigr),
\end{aligned}
\]
where \(c_{1,0}=0\), and for \(\ell\ge2\), with \(k_\ell=2^{\ell-2}\),
\[
c_{\ell,k}
=
\frac{2}{3}\,2^{-\ell/4}
\begin{cases}
(k_\ell-k)^{3/4}+(k_\ell-k-1)^{3/4}
-2\left(k_\ell-k-\frac12\right)^{3/4}, & k<k_\ell,\\[6pt]
2\left(k-k_\ell+\frac12\right)^{3/4}
-(k-k_\ell)^{3/4}
-(k-k_\ell+1)^{3/4}, & k\ge k_\ell.
\end{cases}
\]
Substituting the above expansion into the Haar wavelet norm \eqref{eq:haar-wavelet-norm} shows that \(g_3\in \mathcal H_{\mathrm{wav},\alpha,s,p,q}\) with \(\alpha\in(0,1/4)\) and \(p=q=2\). Theorem~\ref{thm:median-weighted-probability} then yields a convergence rate of nearly \(O(N^{-3/4})\).

\begin{figure}[t]
\centering
\includegraphics[width=\textwidth]
{figure/nonsmooth\_rmse\_B\_448.pdf}
\caption{Plots of RMSEs for $g_1$ (left), $g_2$ (middle), and $g_3$ (right). Each RMSE is computed from 448 independent replicates.}
\label{fig:nonsmooth-rmse}
\end{figure}

Figure~\ref{fig:nonsmooth-rmse} reports the RMSEs for the three integrands. All four RQMC methods significantly outperform MC and achieve the predicted convergence rates. The three Sobol'-based methods perform similarly, whereas HRD shows slightly larger errors.

\section{Discussions}\label{sec:discussion}

In this work, we have shown that for reduced GNS under coarse scrambling, the median RQMC estimator attains a comparable RMSE to MC, while achieving near-optimal convergence rates with high probability for function classes beyond \(L^2\). These results also hold for the completely random design \cite{pan2026automatic} and the Hankel random design \cite{goda2026quasi}, since their maximal gain coefficients are equal to \(1\). In particular, the theorems in Section~\ref{sec:main} hold verbatim with \(C_\beta\) replaced by \(1\).

As observed in Figure~\ref{fig:fc-boxplots}, coarsely scrambled Sobol' sequences consistently outperform the Hankel random design when coordinates are ordered in descending importance. Explaining this advantage requires exploiting the equidistribution property of Sobol' sequences. In the proof of Theorem~\ref{thm:median-weighted-probability}, for instance, our choice of the threshold \(T_u\) is conservative: many low-frequency \(\bsh\in K_u(T_u)\) actually satisfy \(\mathbb P\{\bm h\in D_{\bm C,m}^{\perp}(\bm M)\}=0\) due to equidistribution and can therefore be excluded from the union bound \eqref{eqn:unionbound}. Pan \cite{pan2026dimension} used this strategy for linearly scrambled digital nets to derive improved tractability conditions. We leave a full exploration of this idea in the present setting to future work.

Finally, although the error bounds in Sections~\ref{subsec:Lp}--\ref{subsec:Haarderiv} are probabilistic, an RMSE bound can be derived as follows. Suppose that for some constants \(C,\kappa>0\), the inequality
\[
\mathbb P\left\{
\left|\mathcal M_{m,R,\mathcal S}(f)-I(f)\right|
> C\delta^{-\kappa}
\right\}
\le
\frac12(8\delta)^{\frac{R+1}{2}}
\]
holds for all \(\delta\in(0,1/8)\). Then by the equality $\mathbb E|X|=\int_0^\infty \mathbb P\{|X|>x\}\rd x$,
\begin{align*}
 \mathbb E\left[
	\left|{\mathcal M}_{m,R,\mathcal S}(f)-I(f)\right|^2
	\right]
\le & (C 8^\kappa)^2+\int_{(C 8^\kappa)^2}^\infty  \mathbb P
\left\{\left|{\mathcal M}_{m,R,\mathcal S}(f)-I(f)\right|^2>x\right\} \rd x \\
\le & C^2\left(64^{\kappa}+\int_{0}^{1/8}\frac12(8\delta)^{\frac{R+1}{2}} \frac{2\kappa}{\delta^{2\kappa+1}} \rd\delta\right).
\end{align*}
For \(R\ge 4\kappa\), the integral converges, and the RMSE is bounded by a constant multiple of \(C\). As an example, applying this argument  to \eqref{eqn:Haarmedianbound} yields an RMSE of \(O(b^{-(\alpha_p+1/2-\varepsilon)m})\) for $f\in\mathcal H_{\mathrm{wav},\alpha,s,p,q}$ whenever \(R\ge 4\alpha_p+2\). Mean absolute error bounds can be obtained analogously.

\appendix

\section{Proof of Lemma~\ref{thm:walsh-error-scrambled-net}}
\label{app:proof-walsh-error-scrambled-net}

\begin{proof}
	Since $\bm D$ and $M$ are independent, it suffices to work conditionally on
	$M$. Thus, throughout the proof, we regard $M$ as fixed. First, we establish the result for Walsh polynomials. Let
	$f=\sum_{\bm h\in F}\widehat f(\bm h)\operatorname{wal}_{\bm h}$, where
	$F\subseteq\mathbb N_0^s$ is finite. The character property of digital nets \cite{dick:pill:2010}
	gives
	\begin{equation*}
	\frac1{b^m}\sum_{n=0}^{b^m-1}
	\operatorname{wal}_{\bm h}(\widetilde{\bm x}_n)
	=
	S_{\bm D}(\bm h)
	\mathbf1_{\{\bm h\in D_{\bm C,m}^{\perp}(M)\}},
	\end{equation*}
	where
	$S_{\bm D}(\bm h):=\prod_{j=1}^s
	\operatorname{wal}_{h_j}(\phi_b(D_j))$. Hence
	\begin{equation*}
	\frac1{b^m}\sum_{n=0}^{b^m-1}f(\widetilde{\bm x}_n)-I(f)
	=
	\sum_{\bm0\ne\bm h\in F}
	\widehat f(\bm h)S_{\bm D}(\bm h)
	\mathbf1_{\{\bm h\in D_{\bm C,m}^{\perp}(M)\}} .
	\end{equation*}
	Since $S_{\bm D}(\bm h)$ are orthonormal in
	$L^2(\bm D)$, taking the square and expectation with respect to $\bm D$
	yields
\begin{equation}\label{eqn:Walshpoly}
    \mathbb E_{\bm D}
	\left|
	\frac1{b^m}\sum_{n=0}^{b^m-1}f(\widetilde{\bm x}_n)-I(f)
	\right|^2
	=\sum_{\bm0\ne\bm h\in F\cap D_{\bm C,m}^{\perp}(M)}
	|\widehat{f}(\bm h)|^2.
\end{equation}

	For general $f\in L^2$, let $f_N$ be a sequence of Walsh polynomials with
	$f_N\to f$ in $L^2$ as $N\to\infty$. Since each $\widetilde{\bm x}_n$ is uniformly
	distributed with respect to $\bm D$, Jensen's inequality gives
	\begin{equation*}
	\mathbb E_{\bm D}
	\left|
	\frac1{b^m}\sum_{n=0}^{b^m-1}
	(f-f_N)(\widetilde{\bm x}_n)
	\right|^2
	\le
	\frac1{b^m}\sum_{n=0}^{b^m-1}
	\mathbb E_{\bm D}|(f-f_N)(\widetilde{\bm x}_n)|^2
	=
	\|f-f_N\|_{L^2}^2 .
	\end{equation*}
	Together with $|I(f-f_N)|\leq \|f-f_N\|_{L^2}$, the above bound yields
	\begin{align*}
	&\mathbb E_{\bm D}
	\left[
	\left|
	\left(\frac1{b^m}\sum_{n=0}^{b^m-1}f(\widetilde{\bm x}_n)-I(f)\right)
	-
	\left(\frac1{b^m}\sum_{n=0}^{b^m-1}f_N(\widetilde{\bm x}_n)-I(f_N)\right)
	\right|^2
	\right]\\
	&\qquad\le
	2\|f-f_N\|_{L^2}^2+2|I(f-f_N)|^2
	\le
	4\|f-f_N\|_{L^2}^2 .
	\end{align*}
	Moreover, by Parseval's identity,
	\begin{equation*}
	\sum_{\bm0\ne\bm h\in D_{\bm C,m}^{\perp}(M)}
	|\widehat{f-f_N}(\bm h)|^2
	\le
	\|f-f_N\|_{L^2}^2 .
	\end{equation*}
	Consequently, as $N\to\infty$, 
	\begin{align*}
	&\mathbb E_{\bm D}
	\left|
	\frac1{b^m}\sum_{n=0}^{b^m-1}f_N(\widetilde{\bm x}_n)-I(f_N)
	\right|^2
	\to
	\mathbb E_{\bm D}
	\left|
	\frac1{b^m}\sum_{n=0}^{b^m-1}f(\widetilde{\bm x}_n)-I(f)
	\right|^2
	,\\
	&\sum_{\bm0\ne\bm h\in D_{\bm C,m}^{\perp}(M)}
	|\widehat{f_N}(\bm h)|^2
	\to
	\sum_{\bm0\ne\bm h\in D_{\bm C,m}^{\perp}(M)}
	|\widehat f(\bm h)|^2 .
	\end{align*}
	Passing to the limit in the equality~\eqref{eqn:Walshpoly} proves the result.
\end{proof}

\section{Proof of Lemma~\ref{lem:HinL2}}\label{app:proof-lem:HinL2}

If 
$p\in [1,2]$, Jensen's inequality
$\sum_k a_k\le(\sum_k |a_k|^\lambda)^{1/\lambda}$ with
$\lambda=p/2\le1$ gives
\begin{equation*}
	\|Q_{\bm \ell}f\|_{L^2}=\left(
	\sum_{\bm k\in\Delta_{\bm \ell-\bm1}}
	\sum_{\bm i\in\nabla_{\bm \ell}}
	\left|
	\left\langle f,\psi_{\bm i,\bm k}^{\bm \ell}\right\rangle
	\right|^2
	\right)^{1/2}
	\le
	\left(
	\sum_{\bm k\in\Delta_{\bm \ell-\bm1}}
	\sum_{\bm i\in\nabla_{\bm \ell}}
	\left|
	\left\langle f,\psi_{\bm i,\bm k}^{\bm \ell}\right\rangle
	\right|^p
	\right)^{1/p}.
\end{equation*}
Therefore,
\begin{equation*}
	\|f\|^q_{\mathrm{wav},\alpha_p,s,2,q}=\sum_{\bm \ell\in\mathbb N_0^s}
	b^{q(\alpha-1/p+1/2)|\bm \ell|_1}
	\|Q_{\bm \ell}f\|_{L^2}^q
	\le
	\|f\|_{\mathrm{wav},\alpha,s,p,q}^{\,q}.
\end{equation*}
If $p>2$, \eqref{eqn:Hnormpq} gives
$\|f\|_{\mathrm{wav},\alpha,s,2,q}\le
\|f\|_{\mathrm{wav},\alpha,s,p,q}$. Hence, \eqref{Walsh_bound} holds in both cases. 

When $\alpha_p>0$ and $q\in[1,2]$, Jensen's inequality with $\lambda=q/2\le1$ gives
\begin{align*}
    \|f\|_{L^2}^{\,2}=\sum_{\bm \ell\in\mathbb N_0^s}\|Q_{\bm \ell}f\|_{L^2}^2\leq \left(\sum_{\bm \ell\in\mathbb N_0^s}\|Q_{\bm \ell}f\|_{L^2}^q\right)^{2/q}\leq \|f\|^2_{\mathrm{wav},\alpha_p,s,2,q}.
\end{align*}

When $\alpha_p>0$ and $q>2$, let $q':=q/(q-2)$. H\"older's inequality gives
\begin{align*}
    \|f\|_{L^2}^{\,2}=\sum_{\bm \ell\in\mathbb N_0^s}\|Q_{\bm \ell}f\|_{L^2}^2\leq & \left(\sum_{\bm \ell\in\mathbb N_0^s}b^{q\alpha_p|\bm \ell|_1}\|Q_{\bm \ell}f\|_{L^2}^q\right)^{2/q}\left(\sum_{\bm \ell\in\mathbb N_0^s}b^{-2q'\alpha_p|\bm \ell|_1}\right)^{1/q'}\\
    = & \|f\|^2_{\mathrm{wav},\alpha_p,s,2,q}(1-b^{-2q'\alpha_p})^{-s/q'}.
\end{align*}
Combining the two cases yields \eqref{eqn:L2embedding} since $\|f\|_{\mathrm{wav},\alpha_p,s,2,q}\le
\|f\|_{\mathrm{wav},\alpha,s,p,q}$.

\section{Proof of Lemma~\ref{lem:HaarWalshBlock}}
\label{app:proof-HaarWalshBlock}

\begin{proof}
We first prove the one-dimensional case.  For \(\ell=0\), \(Q_0=P_0\) is the integration projection, and \(\Lambda_0=\{0\}\), so the claim is immediate. For \(\ell\ge1\), set
\[
\mathcal W_\ell:=\operatorname{span}\{\operatorname{wal}_h: b^{\ell-1}\le h<b^\ell\}.
\]
We claim that \(\operatorname{ran}(Q_\ell)=\mathcal W_\ell\).

Indeed, for every integer \(h\in[b^{\ell-1},b^\ell)\), \(\operatorname{wal}_h\) has zero integral over every \(b\)-adic interval of length \(b^{-(\ell-1)}\), so \(\operatorname{wal}_h\in\ker P_{\ell-1}\); meanwhile, \(\operatorname{wal}_h\) is piecewise constant on intervals of length \(b^{-\ell}\), so \(\operatorname{wal}_h\in\operatorname{ran}P_\ell\). Therefore,
\[
\operatorname{wal}_h\in \operatorname{ran}P_\ell\cap\ker P_{\ell-1}=\operatorname{ran}Q_\ell,
\]
which gives \(\mathcal W_\ell\subseteq\operatorname{ran}(Q_\ell)\). The equality follows since both spaces have dimension \(b^\ell-b^{\ell-1}\). Thus the one-dimensional claim holds for every \(\ell\in\mathbb N_0\). The multidimensional result is obtained by taking tensor products over the \(s\) coordinates.
\end{proof}

\section{Proof of Lemma~\ref{lem:Ku-size-deriv}}\label{app:lem:Ku-size-deriv}

The proof requires the following one‑dimensional estimate.

\begin{lemma}\label{lem:count1d}
Let \(r\in\mathbb{N}\), \(\alpha_p\ge 0\), and set \(c:=r+\alpha_p\). For \(h\in\mathbb{N}\), define
\[
S(h):=(1-\alpha_p)\mu_r(h)+\alpha_p\mu_{r+1}(h).
\]
Then there exists a constant \(C_0\) depending on $r$ and $\alpha_p$ such that for every \(L\ge 0\),
\[
|\{h\in\mathbb{N}: S(h)\le L\}|\le C_0\, b^{\,L/c}.
\]
\end{lemma}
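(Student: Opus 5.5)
We count $h$ according to its number of nonzero digits $\nu$ and its leading digit positions $a_1>a_2>\cdots>a_\nu\ge1$ in the sparse expansion $h=\sum_{i=1}^{\nu}c_ib^{a_i-1}$. Once the positions are fixed, there are $(b-1)^{\nu}$ choices for the nonzero digits. It is convenient to regroup by the first $\min(r+1,\nu)$ positions, since $S(h)$ depends only on these, and to let the remaining digits run freely.

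\textbf{Case $\nu\le r$.} Here $\mu_r(h)=\mu_{r+1}(h)=\mu_\nu(h)=a_1+\cdots+a_\nu$. Hence
\[
S(h)=a_1+\cdots+a_\nu\ge a_1,
\]
so $S(h)\le L$ forces $a_1\le L$. Such $h$ lie in $[1,b^{L})$, but this gives only $b^{L}$, which is too weak when $c>1$. We need a finer count. The number of $\nu$-tuples $a_1>\cdots>a_\nu\ge1$ with sum at most $L$ is at most the number of partitions, which is at most $C(1+L)^{\nu}$. The total is therefore bounded by $C_r(b-1)^r(1+L)^r$. This is polynomial in $L$ and hence at most $C_0b^{L/c}$ with $C_0$ depending on $r$ and $\alpha_p$. (If $\alpha_p=0$ and $L$ is small this is still fine; we absorb it into constants.)

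\textbf{Case $\nu\ge r+1$.} Here
\[
S(h)=(a_1+\cdots+a_r)+\alpha_p a_{r+1}.
\]
The digits below position $a_{r+1}$ are free, giving at most $b^{a_{r+1}-1}$ choices for the tail. The top $r+1$ digits give $(b-1)^{r+1}$ choices. So the count is at most
\[
(b-1)^{r+1}\sum b^{a_{r+1}},
\]
where the sum runs over $a_1>\cdots>a_{r+1}\ge1$ with $S\le L$.

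The key inequality is that $a_i\ge a_{r+1}+(r+1-i)$ for $i\le r$. Writing $a_i=a_{r+1}+d_i$ with $d_1>d_2>\cdots>d_r\ge1$, we get
\[
S=c\,a_{r+1}+(d_1+\cdots+d_r).
\]
Set $t=a_{r+1}$ and $D=\sum_i d_i$, so the constraint is $ct+D\le L$, i.e.\ $t\le (L-D)/c$. Summing the geometric series in $t$ gives a contribution of at most $\frac{b}{b-1}\,b^{(L-D)/c}$ for each tuple $(d_i)$. It remains to sum $b^{-D/c}$ over all strictly decreasing positive $r$-tuples. This is at most
\[
\Bigl(\sum_{d\ge1}b^{-d/c}\Bigr)^{r}=\bigl(b^{1/c}-1\bigr)^{-r}<\infty,
\]
since $c\ge r\ge1>0$. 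Altogether the count is $C\,b^{L/c}$.

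\textbf{Main obstacle.} The delicate point is the case $\nu\le r$ together with the role of $\alpha_p$. When $\alpha_p>1$, the coefficient $1-\alpha_p$ is negative. Then the identity $S=c\,a_{r+1}+D$ still holds, since it is exact algebra: $(1-\alpha_p)\mu_r+\alpha_p\mu_{r+1}=\mu_r+\alpha_p a_{r+1}$. So positivity is preserved, and the case $\nu\ge r+1$ needs no change. In the case $\nu\le r$ we must also ensure that $S\ge0$ and that the count is polynomial in $L$; we use $S=\mu_\nu(h)$ there. Combining the two cases and summing the polynomial bound into $C_0b^{L/c}$ completes the proof.
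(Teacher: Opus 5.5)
Your proof is correct and follows essentially the same route as the paper. Both arguments split according to whether $h$ has at most $r$ nonzero digits, using a polynomial-in-$L$ count in that case; otherwise both parametrize by $a_{r+1}$ (the paper's $q$), leave the lower $a_{r+1}$ digits free, and use $S(h)=c\,a_{r+1}+\sum_{i=1}^r(a_i-a_{r+1})$.

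The only difference is the order of summation. You fix the gaps $d_i$ first and sum a geometric series in $a_{r+1}$, which factorizes cleanly into $(b^{1/c}-1)^{-r}$. The paper instead fixes $q$, bounds the number of top positions by $(L-cq)^r$, and then sums $b^{(L-L')/c}(L')^r$, where $L'=L-cq$ need not be an integer. Your ordering avoids that loose indexing.
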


\begin{proof}
If \(h\) has at most \(r\) nonzero digits, then \(S(h)=\mu_r(h)\le L\), giving at most
\[
\sum_{j=1}^r \binom{L}{j}(b-1)^j \le rL^r(b-1)^r \le C_1 b^{L/c},
\]
where $C_1$ depends on $r$ and $c$. Otherwise, let \(q:=\mu_{r+1}(h)-\mu_r(h)\ge 1\). Since the first \(r\) nonzero positions are strictly larger than \(q\), we have \(\mu_r(h)\ge r(q+1)\). Thus
\[
S(h)=\mu_r(h)+\alpha_p q\ge (r+\alpha_p)q+r=cq+r.
\]
For fixed \(q\le (L-r)/c\), the digits in the first \(q\) positions have at most \(b^q\) possible values; the remaining \(r\) nonzero positions can be chosen from at most \((L-cq)^r\) possibilities, and each such position has at most \(b-1\) digit values, yielding a total number $b^q (L-cq)^r (b-1)^r$. Summing over \(q\) and setting \(L'=L-cq\),
\[
\sum_{q=1}^{\lfloor (L-r)/c\rfloor} b^q (L-cq)^r (b-1)^r \leq (b-1)^r\sum_{L'=r}^{\infty} b^{(L-L')/c} (L')^r \le C_2 b^{L/c},
\]
where $C_2$ depends on $r$ and $c$. Taking \(C_0=C_1+C_2\) completes the proof.
\end{proof}

\begin{proof}[Proof of Lemma~\ref{lem:Ku-size-deriv}]
If \(T<|u|\), the set \(K_{u,r,\alpha_p}(T)=\varnothing\) is empty  and the bound is trivial. Assume henceforth \(T\ge |u|\).
For each \(j\in u\), set \(L_j=S(h_j)\). By Lemma~\ref{lem:count1d},
\[
|K_{u,r,\alpha_p}(T)|
\le C_0^{|u|}
\sum_{\substack{L_j\ge 1,\; j\in u\\ \sum_{j\in u}L_j\le T}}
b^{\sum_{j\in u}L_j/c}\leq C_0^{|u|}
\sum_{n=|u|}^{\lfloor T\rfloor}
b^{n/c}\binom{n-1}{|u|-1}.
\]
Applying \eqref{eqn:cardbound} with $b$ replaced by $b^{1/c}$ yields the claimed bound.
\end{proof}

\section{Proof of Lemma \ref{lem:walsh-tail-outside-Ku-deriv}}\label{app:liu-block-proof}

We first recall the auxiliary function introduced in \cite{Suzuki2016Walsh}. For
$k=0$, set $W(0)(x):=1$. For $k>0$, write its $b$-adic expansion uniquely as $k=\sum_{i=1}^{\nu}c_i b^{a_i-1}$ with
$c_i\in\mathbb Z_b\setminus\{0\}$ and $a_1>\cdots>a_\nu\ge1$. Define
recursively
\begin{equation*}
	W(k)(x)
	:=
	\int_0^x
	\overline{\operatorname{wal}_{c_\nu b^{a_\nu-1}}(y)}
	W(k-c_\nu b^{a_\nu-1})(y)\,\rd y .
\end{equation*}
It is known that $W(k)\in L^\infty$ and, if $\nu>0$, $W(k)$ is periodic
with period $b^{-a_\nu+1}$ \cite{Suzuki2016Walsh}.
For $\bm k\in\mathbb N_0^s$, we use the tensor-product notation
\begin{equation*}
	W(\bm k)(\bm x):=\prod_{j=1}^sW(k_j)(x_j).
\end{equation*}

Next, let \(r\in\mathbb N\). For \(h\in\mathbb N\) with the above $b$-adic expansion, define
\[
h_r^+:=\sum_{i=1}^{\min(r,\nu)}c_i b^{a_i-1},\qquad
h_r^-:=h-h_r^+.
\]
Also set
\[
h_{(i)}:=
\begin{cases}
a_r,&1\le i\le\nu,\\
0,&\text{otherwise},
\end{cases}
\qquad
n_r(h):=\min(r,\nu).
\]
For \(h=0\), these quantities are defined to be zero. For a vector \(\bm h=(h_1,\dots,h_s)\in\mathbb N_0^s\), the notation \(\bm h_r^+\), \(\bm h_r^-\), \(\bm h_{(r)}\), and \(\bm n_r(\bm h)\) is understood componentwise.

The following lemma generalizes \cite[Theorem~2.5]{Suzuki2016Walsh} to Sobolev space settings.

\begin{lemma}\label{lem:suzuki-yoshiki-sobolev}
	Let $r\in \natu$ and $f\in W_{\mathrm{mix}}^{r,1}$.
	Then, for every $\bm h\in\mathbb N_0^s$,
	\begin{equation}\label{eq:liu-ibp-formula}
		\widehat f(\bm h)
		=
		(-1)^{|\bm n_r(\bm h)|_1}
		\int_{(0,1)^s}
		f^{(\bm n_r(\bm h))}(\bm x)
		\overline{\operatorname{wal}_{\bm h_r^-}(\bm x)}
		W(\bm h_r^+)(\bm x)
		\,\rd\bm x .
	\end{equation}
\end{lemma}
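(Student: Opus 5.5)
The proof reduces to a one-dimensional identity, which is then tensorized. Since $W(\bm k)$ and $\operatorname{wal}_{\bm h}$ are products over coordinates, and $f\in W_{\mathrm{mix}}^{r,1}$ allows us to integrate by parts separately in each variable, Fubini lets us handle the coordinates one at a time. In coordinate $j$ we perform $n_r(h_j)$ integrations by parts, with the other variables frozen. The sign $(-1)^{|\bm n_r(\bm h)|_1}$ is the product of the per-coordinate signs. For coordinates with $h_j=0$ nothing is done, since $n_r=0$ and $W(0)=1$.

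\textbf{One-dimensional claim.} For $g\in W^{r,1}(0,1)$ and $h\in\mathbb N$ with expansion $h=\sum_{i=1}^\nu c_ib^{a_i-1}$, we show by induction on $t\in\{0,\dots,\min(r,\nu)\}$ that
\[
\widehat g(h)=(-1)^t\int_0^1 g^{(t)}(x)\,\overline{\operatorname{wal}_{h-k_t}(x)}\,W(k_t)(x)\,\rd x,
\qquad k_t:=\sum_{i=1}^{t}c_ib^{a_i-1}.
\]
The case $t=0$ is the definition of $\widehat g(h)$. For the inductive step, split $\overline{\operatorname{wal}_{h-k_t}}=\overline{\operatorname{wal}_{h-k_{t+1}}}\cdot\overline{\operatorname{wal}_{c_{t+1}b^{a_{t+1}-1}}}$, which holds because the digits of $h-k_{t+1}$ and $c_{t+1}b^{a_{t+1}-1}$ are disjoint.

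The key observation concerns the lower-order digits. The function $\operatorname{wal}_{h-k_{t+1}}$ involves only the digits below position $a_{t+1}$, so it is constant on $b$-adic intervals $J$ of length $b^{-(a_{t+1}-1)}$. Set $G:=W(k_{t+1})$. By the recursive definition, $G'=\overline{\operatorname{wal}_{c_{t+1}b^{a_{t+1}-1}}}\,W(k_t)$ almost everywhere (note the recursion strips the lowest digit, $k_{t+1}-c_{t+1}b^{a_{t+1}-1}=k_t$, consistent with the definition of $W$). On each such $J$, integrate by parts:
\[
\int_J g^{(t)}\,G'\,\rd x=\big[g^{(t)}G\big]_{\partial J}-\int_J g^{(t+1)}G\,\rd x.
\]
This is valid because $g^{(t)}$ is absolutely continuous when $t<r$, and $G$ is Lipschitz.

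\textbf{Main obstacle: the boundary terms.} We must show that $G=W(k_{t+1})$ vanishes at every endpoint of $J$, that is, at all multiples of $b^{-(a_{t+1}-1)}$. The cited periodicity (period $b^{-a_{t+1}+1}$) together with $W(k)(0)=0$ gives this, since $W(k)(0)=0$ follows directly from the defining integral. After multiplying by the constant value of $\overline{\operatorname{wal}_{h-k_{t+1}}}$ on $J$ and summing over $J$, the boundary terms drop out and we obtain the case $t+1$ with an extra factor $-1$.

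A secondary check is that these pointwise evaluations make sense for Sobolev $g$. We take the absolutely continuous representative of $g^{(t)}$. In the multivariate setting, the representative is absolutely continuous in $x_j$ for almost every value of the remaining variables, and the mixed weak derivatives of $f\in W^{r,1}_{\mathrm{mix}}$ belong to $L^1$, so Fubini applies. Setting $t=n_r(h)$ gives $k_t=h_r^+$ and $h-k_t=h_r^-$, which proves the lemma in each coordinate; tensorizing yields \eqref{eq:liu-ibp-formula}.
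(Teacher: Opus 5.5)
Your proof is correct, and its overall structure matches the paper's. Both prove a one-dimensional integration-by-parts identity in which every boundary term vanishes, then apply it coordinate by coordinate through Fubini and the absolute continuity of Sobolev sections.

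The genuine difference is in the one-dimensional step. The paper integrates by parts over the whole interval $[0,1]$ against the iterated antiderivatives $\overline{\operatorname{wal}}_k^{[n]}$. It imports from \cite{Suzuki2016Walsh} two facts: the closed form $\overline{\operatorname{wal}}_k^{[n]}=\overline{\operatorname{wal}_{k_n^-}}\,W(k_n^+)$, and the vanishing $\overline{\operatorname{wal}}_k^{[n]}(0)=\overline{\operatorname{wal}}_k^{[n]}(1)=0$. You instead pull out the lower-digit factor $\overline{\operatorname{wal}_{h-k_{t+1}}}$, which is constant on each $b$-adic cell of length $b^{-(a_{t+1}-1)}$. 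You then use $W(k_{t+1})$ directly as the antiderivative of $\overline{\operatorname{wal}_{c_{t+1}b^{a_{t+1}-1}}}\,W(k_t)$. The boundary terms at every cell endpoint vanish by periodicity together with $W(k_{t+1})(0)=0$.

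Your cellwise argument in effect reproves the product formula of Lemma~2.3 in \cite{Suzuki2016Walsh} along the way. This makes it more self-contained: the only imported fact is the periodicity of $W$, and that follows by a short induction from the recursive definition. The paper's route is shorter given the citation, and it needs only one global integration by parts per step, with no summation over cells.

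Your multivariate reduction is the same as the paper's, only terser. The fact to state explicitly is the following: for almost every $\bm x_{-j}$, the section of the relevant derivative of $f$ lies in $W^{n_j,1}(0,1)$, and its weak derivatives of order $0\le n\le n_j$ are the sections of $\partial_j^n f$. The paper derives this by iterating Ziemer's Theorem~2.1.4.
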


\begin{proof}
	Let $\nu(k)$ denote the number of nonzero digits in the $b$-adic
	expansion of $k$, with $\nu(0):=0$. Following \cite{Suzuki2016Walsh}, define
	the iterated integrals of the conjugate Walsh function by
	\begin{equation*}
		\overline{\operatorname{wal}}_k^{[0]}(x)
		:=
		\overline{\operatorname{wal}_k(x)},
		\qquad
		\overline{\operatorname{wal}}_k^{[n]}(x)
		:=
		\int_0^x
		\overline{\operatorname{wal}}_k^{[n-1]}(y)\,\rd y .
	\end{equation*}
	By \cite[Lemma~2.3]{Suzuki2016Walsh}, for
	$0\le n\le\nu(k)$,
	\begin{equation}\label{eq:integrated-walsh-function}
		\overline{\operatorname{wal}}_k^{[n]}(x)
		=
		\overline{\operatorname{wal}_{k_n^-}(x)}W(k_n^+)(x),
	\end{equation}
	and, whenever $1\le n\le\nu(k)$,
	\begin{equation}\label{eq:integrated-walsh-boundary}
		\overline{\operatorname{wal}}_k^{[n]}(0)
		=
		\overline{\operatorname{wal}}_k^{[n]}(1)
		=0.
	\end{equation}

	We first establish the one-dimensional formula. Let
	$g\in W^{n,1}(0,1)$, where $0\le n\le\nu(k)$. If $n=0$, the desired
	identity is the definition of $\widehat g(k)$. Suppose that $n\ge1$.
	For every $0\le j<n$, we have $g^{(j)}\in W^{1,1}(0,1)$. By the
	one-dimensional Sobolev representation theorem
	\cite[Theorem~8.2]{brezis2011functional}, there is an absolutely continuous
	representative $\widetilde g^{(j)}$ on $[0,1]$ such that
	\begin{equation*}
		g^{(j)}=\widetilde g^{(j)}\quad\text{a.e. on }(0,1),
		\qquad
		\widetilde g^{(j)}(x)-\widetilde g^{(j)}(y)
		=
		\int_y^x g^{(j+1)}(t)\,\rd t,
		\quad x,y\in[0,1].
	\end{equation*}
	Denote the representative of $g$ by $\widetilde g$. The first integration
	by parts gives
	\begin{align*}
		\widehat g(k)
		&=
		\int_{(0,1)}\widetilde g(x)
		\overline{\operatorname{wal}}_k^{[0]}(x)\,\rd x\\
		&=
		\widetilde g(1)\overline{\operatorname{wal}}_k^{[1]}(1)
		-
		\widetilde g(0)\overline{\operatorname{wal}}_k^{[1]}(0)
		-
		\int_{(0,1)}g'(x)
		\overline{\operatorname{wal}}_k^{[1]}(x)\,\rd x
		\\
		&=
		-\int_{(0,1)}g'(x)
		\overline{\operatorname{wal}}_k^{[1]}(x)\,\rd x,
	\end{align*}
	where the boundary terms vanish by
	\eqref{eq:integrated-walsh-boundary}. Repeating the argument gives
	\begin{equation*}
		\widehat g(k)
		=
		(-1)^n\int_{(0,1)}
		g^{(n)}(x)\overline{\operatorname{wal}}_k^{[n]}(x)\,\rd x.
	\end{equation*}
	Together with \eqref{eq:integrated-walsh-function}, this yields
	\begin{equation}\label{eq:one-dimensional-sobolev-walsh}
		\widehat g(k)
		=
		(-1)^n\int_{(0,1)}
		g^{(n)}(x)
		\overline{\operatorname{wal}_{k_n^-}(x)}W(k_n^+)(x)\,\rd x.
	\end{equation}

	Now we turn to the multidimensional case. For each $j\in 1{:}s$, set $n_j:=n_r(h_j)$. We first record the
	relevant section regularity of mixed Sobolev functions.
By \cite[Theorem~2.1.4]{ziemer1989weakly}, we may choose a
representative $\widetilde f$ such that for almost every $\bm{x}_{-j}\in(0,1)^{s-1}$ the
section $t\mapsto \widetilde f(t,\bm{x}_{-j})$ belongs to $W^{1,1}(0,1)$,
and its weak derivative equals the section of $\partial_j f$ for almost every $t\in (0,1)$. Since $\partial_j f\in W^{1,1}(0,1)$, applying the theorem again to
$\partial_j f$ yields a representative $\widetilde{\partial_j f}$ with
analogous properties. By Fubini's theorem, for almost every
$\bm{x}_{-j}$, the functions
$t\mapsto \partial_j f(t,\bm{x}_{-j})$ and
$t\mapsto \widetilde{\partial_j f}(t,\bm{x}_{-j})$ agree for almost every $t$.
Therefore, for such $\bm{x}_{-j}$, the first weak derivative of
the section $t\mapsto \widetilde f(t,\bm{x}_{-j})$ is itself in $W^{1,1}(0,1)$, and its weak
derivative equals the section of $\partial_j^2 f$. Inductively, we
obtain that for almost every $\bm{x}_{-j}$, the section of $\widetilde f$ is in
$W^{n_j,1}(0,1)$ and its $n$-th weak derivative is the section of
$\partial_j^n f$ for $0\le n\le n_j$.
	The same statement holds with $f$ replaced by any mixed partial
	derivative of $f$ of order at most $r$ in the remaining variables.

	We now apply the one-dimensional identity
	\eqref{eq:one-dimensional-sobolev-walsh} successively. Starting from
	the definition of the Walsh coefficient and using Fubini's theorem,
	we may write
	\[
		\widehat f(\bm h)
		=
		\int_{(0,1)^{s-1}}
		\left(
			\int_0^1 f(x_1,\bm{x}_{2:s})
			\overline{\operatorname{wal}_{h_1}(x_1)}\,\mathrm{d}x_1
		\right)
		\prod_{j=2}^s
		\overline{\operatorname{wal}_{h_j}(x_j)}
		\,\mathrm{d}\bm{x}_{2:s}.
	\]
	By the section regularity just established, for almost every fixed
	$\bm{x}_{-1}$ the inner integrand is a function of $x_1$ in
	$W^{n_1,1}(0,1)$. Applying
	\eqref{eq:one-dimensional-sobolev-walsh} with $k=h_1$ and $n=n_1$
	gives
	\[
		\widehat f(\bm h)
		=
		(-1)^{n_1}
		\int_{(0,1)^s}
		\partial_1^{n_1} f(\bm x)
		\overline{\operatorname{wal}_{(h_1)_{n_1}^-}(x_1)}
		W((h_1)_{n_1}^+)(x_1)
		\prod_{j=2}^s
		\overline{\operatorname{wal}_{h_j}(x_j)}
		\,\mathrm{d}\bm x.
	\]

	Now $\partial_1^{n_1} f$ still belongs to the mixed Sobolev space of order $r$ in the remaining variables. Hence the section regularity
	statement applies to $\partial_1^{n_1} f$ in the $x_2$-direction. 
	Continuing in this way through $j=2,\dots,s$, we arrive at
	\[
		\widehat f(\bm h)
		=
		(-1)^{|\bm n_r(\bm h)|_1}
		\int_{(0,1)^s}
		f^{(\bm n_r(\bm h))}(\bm x)
		\prod_{j=1}^s
		\overline{\operatorname{wal}_{(h_j)_{n_j}^-}(x_j)}
		W((h_j)_{n_j}^+)(x_j)
		\,\mathrm{d}\bm x.
	\]
	By the definition of $\bm h_r^-$ and $\bm h_r^+$, this is exactly
	\eqref{eq:liu-ibp-formula}.
\end{proof}

Next, we introduce the Walsh blocks used in the subsequent analysis. For \(\boldsymbol{\ell}\in\mathbb N_0^s\), define the set of upper parts
\[
\mathcal A_{r,\boldsymbol{\ell},s}
:=
\left\{
\boldsymbol{k}\in\mathbb N_0^s\setminus\{\bm 0\}:
k_{j,(r+1)}=0\ \forall j\in 1{:}s,\quad
k_{j,(r)}>\ell_j\ \forall j\in \supp(\bm \ell)
\right\}.
\]
We call its elements admissible for \(\boldsymbol{\ell}\). By the definition, it holds for every \(\boldsymbol{k}\in\mathcal A_{r,\boldsymbol{\ell},s}\) that \(W(k_j)\) has period \(b^{-\ell_j}\) for all \(j\in 1{:}s\).
For an upper part \(\boldsymbol{k}_r^+\), define its set of admissible lower levels by
\[
\mathcal L_r(\boldsymbol{k}_r^+)
:=
\left\{
\boldsymbol{\ell}\in\mathbb N_0^{s}: 
\boldsymbol{k}_r^+\in
\mathcal A_{r,\boldsymbol{\ell},s}
\right\}.
\]
For \(\boldsymbol{k}_r^+\in\mathcal A_{r,\boldsymbol{\ell},s}\), define the associated Walsh block
\[
B_{r+1,\boldsymbol{\ell},s}(\boldsymbol{k}_r^+)
:=
\left\{
\boldsymbol{h}\in\mathbb N_0^s:
\boldsymbol{h}_r^+=\boldsymbol{k}_r^+,\ 
\Vec{\bm\mu}_1(\boldsymbol{h}_r^-)=\boldsymbol{\ell}
\right\}.
\]
Equivalently, for each \(j\in 1{:}s\), \(h_{j}\) ranges over the integers
\[
k_{j,r}^+ +\lfloor b^{\ell_j-1}\rfloor \le h_j \leq k_{j,r}^+ + b^{\ell_j}-1.
\]

\begin{lemma}\label{lem:liu-parseval-projection}
Let \(r\in\mathbb N\) and \(f\in W_{\mathrm{mix}}^{r,2}\) with ANOVA components \(f_u\). For \(\bm\ell\in\mathbb N_0^s\) and \(\bm k_r^+\in\mathcal A_{r,\bm\ell,s}\), set \(\bm\tau=\bm n_r(\bm k_r^+)\) and \(u=\operatorname{supp}(\bm k_r^+)\). Then
\begin{equation*}
\sum_{\bm h\in B_{r+1,\bm\ell,s}(\bm k_r^+)}
|\widehat f(\bm h)|^2
=
\left\|
Q_{\bm\ell_u}
\left(
f_u^{(\bm\tau_u)} W(\bm k_{u,r}^+)
\right)
\right\|_{L^2}^{2}.
\end{equation*}
\end{lemma}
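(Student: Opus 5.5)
The plan is to rewrite each Walsh coefficient in the block through the integration-by-parts formula of Lemma~\ref{lem:suzuki-yoshiki-sobolev}. Then recognize the resulting quantities as the Walsh coefficients of a single fixed function, $g:=f_u^{(\bm\tau_u)}W(\bm k_{u,r}^+)$, over a Walsh block of the form $\Lambda_{\bm\ell_u}$. The identity will follow from Lemma~\ref{lem:HaarWalshBlock}.

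\textbf{Step 1: reduce to the ANOVA component $f_u$.} Fix $\bm h\in B_{r+1,\bm\ell,s}(\bm k_r^+)$.
\begin{itemize}
\item Since $\bm k\in\mathcal A_{r,\bm\ell,s}$, for $j\notin u$ we have $k_{j,r}^+=0$. Membership in the block then forces $h_j=(h_j)_r^-$, and since the upper part of $h_j$ is zero, $h_j=0$.
\item For $j\in u$ we have $h_j\neq0$. Hence $\supp(\bm h)=u$, and $\widehat f(\bm h)=\widehat{f_u}(\bm h_u)$ by the ANOVA fact quoted from \cite{pan2026dimension}.
\item Also $\bm n_r(\bm h)=\bm n_r(\bm k_r^+)=\bm\tau$, because $\bm h_r^+=\bm k_r^+$ and $\bm k_r^+$ has at most $r$ nonzero digits per coordinate.
\end{itemize}

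\textbf{Step 2: integrate by parts.} Apply Lemma~\ref{lem:suzuki-yoshiki-sobolev} to $f_u\in W^{r,1}_{\mathrm{mix}}$ in $|u|$ variables. Here $f_u$ is obtained from $f$ by integrating out and subtracting lower-order components, so it inherits mixed regularity on $\{0{:}r\}^{|u|}$. This gives
\[
\widehat f(\bm h)=(-1)^{|\bm\tau|_1}\int_{(0,1)^{|u|}} f_u^{(\bm\tau_u)}(\bm x_u)\,W(\bm k_{u,r}^+)(\bm x_u)\,\overline{\operatorname{wal}_{\bm h_{u,r}^-}(\bm x_u)}\,\rd\bm x_u=(-1)^{|\bm\tau|_1}\,\widehat g(\bm h_{u,r}^-).
\]
The sign has modulus one, so $|\widehat f(\bm h)|^2=|\widehat g(\bm h^-_{u,r})|^2$. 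We need $g\in L^2$. This holds because $W(\cdot)\in L^\infty$ and $f_u^{(\bm\tau_u)}\in L^2$ for $f\in W^{r,2}_{\mathrm{mix}}$.

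\textbf{Step 3: identify the index set.} The map $\bm h\mapsto\bm h_{u,r}^-$ must be a bijection from $B_{r+1,\bm\ell,s}(\bm k_r^+)$ onto $\Lambda_{\bm\ell_u}=\{\bm g\in\mathbb N_0^{|u|}:\Vec{\bm\mu}_1(\bm g)=\bm\ell_u\}$.
\begin{itemize}
\item By definition, $\Vec{\bm\mu}_1(\bm h_r^-)=\bm\ell$.
\item Conversely, let $\bm g$ have $\mu_1(g_j)=\ell_j$. The admissibility condition $k_{j,(r)}>\ell_j$ says every nonzero digit of $k_j$ sits strictly above position $\ell_j$. Therefore $h_j=k_{j,r}^++g_j$ has upper part exactly $k_{j,r}^+$ and lower part exactly $g_j$, with no digit interaction.
\item This is precisely the explicit range description given after the definition of $B_{r+1,\bm\ell,s}$.
\end{itemize}
Summing over the block then gives $\sum_{\bm g\in\Lambda_{\bm\ell_u}}|\widehat g(\bm g)|^2$. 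By Lemma~\ref{lem:HaarWalshBlock}, applied to $g$ in dimension $|u|$, this equals $\|Q_{\bm\ell_u}g\|_{L^2}^2$, which is the claim.

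\textbf{Main obstacle.} I expect the main difficulty in Step 3 to be edge cases.
\begin{itemize}
\item When $\ell_j=0$ for some $j\in u$, the lower part is zero and $h_j=k_{j,r}^+$. The range $\lfloor b^{-1}\rfloor\le h_j-k^+_{j,r}\le0$ gives the single element $0$, consistent with $\Lambda_0=\{0\}$.
\item When $k_j$ has fewer than $r$ nonzero digits, the admissibility condition uses $k_{j,(r)}$, which is defined as $a_r$ only if $\nu\ge r$. In that case I would interpret it, or check, that it still forces $h_r^-=0$, i.e.\ $\ell_j=0$.
\end{itemize}
I would carefully verify these conventions, and that $n_r(h_j)=\tau_j$ in both cases, so that Lemma~\ref{lem:suzuki-yoshiki-sobolev} is applied with the correct derivative order.
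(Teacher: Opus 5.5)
Your proposal is correct and follows the same route as the paper. Both arguments pass to $\widehat{f_u}(\bm h_u)$ via the ANOVA identity, then apply Lemma~\ref{lem:suzuki-yoshiki-sobolev} to recognize $\widehat f(\bm h)$, up to sign, as the Walsh coefficient of $f_u^{(\bm\tau_u)}W(\bm k_{u,r}^+)$ at $\bm h_{u,r}^-$. Both conclude with Lemma~\ref{lem:HaarWalshBlock}, since $\bm h\mapsto\bm h_{u,r}^-$ maps the block bijectively onto $\Lambda_{\bm\ell_u}$.

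Your extra checks fill in details the paper leaves implicit: the bijection, the $L^2$ membership of the product, and the edge case. The edge case resolves as you expect: $k_{j,(r)}=0$ when $k_j$ has fewer than $r$ nonzero digits, so admissibility forces $\ell_j=0$.
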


\begin{proof}
For any \(\bm h\in B_{r+1,\bm\ell,s}(\bm k_r^+)\), we have \(\supp(\bm h)=\supp(\bm k_r^+)=u\) and \(\bm h_r^+=\bm k_r^+\). By \cite[Section~3]{pan2026dimension}, \(\widehat f(\bm h)=\widehat{f_u}(\bm h_u)\). Moreover, since \(\bm n_r(\bm k_r^+)=\bm\tau=(\bm\tau_u,\bm0_{-u})\), we have \(\bm n_r(\bm h_u)=\bm\tau_u\). Applying Lemma~\ref{lem:suzuki-yoshiki-sobolev} to \(\widehat{f_u}(\bm h_u)\) yields
\[
\widehat f(\bm h)
=
\widehat{f_u}(\bm h_u)
=
(-1)^{|\bm\tau_u|_1}
\int_{(0,1)^{|u|}}
f_u^{(\bm\tau_u)}(\bm x_u)
\overline{\operatorname{wal}_{\bm h_{u,r}^-}(\bm x_u)}
W(\bm k_{u,r}^+)(\bm x_u)
\,d\bm x_u .
\]
Thus, up to sign, \(\widehat f(\bm h)\) is the Walsh coefficient of \(f_u^{(\bm\tau_u)}W(\bm k_{u,r}^+)\) at frequency \(\bm h_{u,r}^-\). As \(\bm h\) ranges over \(B_{r+1,\bm\ell,s}(\bm k_r^+)\), the vector \(\bm h_{u,r}^-\) runs over the Walsh block
$\Lambda_{\bm{\ell}_u}=\{\bm \ell'_u\in\mathbb N_0^{|u|}:\Vec{\bm\mu}_1(\bm \ell'_u)=\bm\ell_u\}.$
The claim now follows from Lemma~\ref{lem:HaarWalshBlock}.
\end{proof}

\begin{lemma}\label{lem:periodic-W-filters-haar}
Let \(d\in\mathbb N\), \(\boldsymbol{\ell}\in\mathbb N_0^d\), and let \(W\in L^\infty(0,1)^d\) be periodic with period \(b^{-\ell_j}\) in the \(j\)-th coordinate for each \(j\in 1{:}d\). Then, for every \(g\in L^2(0,1)^d\),
\begin{equation*}
\|Q_{\boldsymbol{\ell}}(gW)\|_{L^2}^2
\le
\|W\|_{L^\infty}^2
\sum_{\substack{\boldsymbol{\ell}'\in\mathbb N_0^d\\\boldsymbol{\ell}'\ge\boldsymbol{\ell}}}
\|Q_{\boldsymbol{\ell}'}g\|_{L^2}^2 .
\end{equation*}
\end{lemma}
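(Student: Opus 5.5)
The plan is to reduce the claim to a statement about the Haar coefficients of $gW$ at level $\bm\ell$. Periodicity of $W$ lets us pull $W$ out of each dyadic-type cell, so only the fine-scale part of $g$ (levels $\bm\ell'\ge\bm\ell$) survives. I would first handle $d=1$ to fix the mechanism, and then use a tensor-product argument. To avoid coordinate-by-coordinate operator bounds, though, I would argue directly in $d$ dimensions.

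\textbf{Step 1 (decompose $g$).} Write $g=\sum_{\bm\ell'}Q_{\bm\ell'}g$ in $L^2$. Split the index set into $\bm\ell'\ge\bm\ell$ and the rest. For $\bm\ell'\not\ge\bm\ell$ there is some coordinate $j$ with $\ell'_j<\ell_j$. In that coordinate, $Q_{\bm\ell'}g$ is piecewise constant on $b$-adic intervals of length $b^{-(\ell_j-1)}$, since it lies in $\operatorname{ran}P_{\ell'_j}\subseteq\operatorname{ran}P_{\ell_j-1}$. I claim $Q_{\bm\ell}\bigl((Q_{\bm\ell'}g)W\bigr)=0$. To see this, note that $Q_{\bm\ell}$ contains the factor $(P_{\ell_j}-P_{\ell_j-1})$ in coordinate $j$, and the integral of the product against $\psi^{\ell_j}_{i,k}$ in $x_j$ splits over the cell of length $b^{-(\ell_j-1)}$ on which $Q_{\bm\ell'}g$ is constant. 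It then reduces to $\int W\,\psi^{\ell_j}_{i,k}\,dx_j$ over that cell. By $b^{-\ell_j}$-periodicity, $W$ takes the same profile on each of the $b$ subintervals. Meanwhile $\psi^{\ell_j}_{i,k}$ is constant on each subinterval of length $b^{-\ell_j}$ and its constants sum to zero. The integral is therefore $(\int_{\text{one period}}W)\cdot(\sum \text{coefficients})=0$. Hence $Q_{\bm\ell}(gW)=Q_{\bm\ell}(g_{\ge}W)$ with $g_{\ge}:=\sum_{\bm\ell'\ge\bm\ell}Q_{\bm\ell'}g$.

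\textbf{Step 2 (contractivity).} $Q_{\bm\ell}$ is an orthogonal projection, being a tensor product of orthogonal projections. Therefore
\[
\|Q_{\bm\ell}(g_{\ge}W)\|_{L^2}\le\|g_{\ge}W\|_{L^2}\le\|W\|_{L^\infty}\|g_{\ge}\|_{L^2}.
\]
By orthogonality of the $Q_{\bm\ell'}$ ranges, $\|g_{\ge}\|_{L^2}^2=\sum_{\bm\ell'\ge\bm\ell}\|Q_{\bm\ell'}g\|_{L^2}^2$. Squaring gives the claim.

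\textbf{Main obstacle.} The delicate point is the vanishing argument in Step 1 when several coordinates are involved. I would phrase it via Fubini: the inner product $\langle (Q_{\bm\ell'}g)W,\psi^{\bm\ell}_{\bm i,\bm k}\rangle$ can be integrated first in $x_j$ for fixed $\bm x_{-j}$. In that slice, $Q_{\bm\ell'}g$ is constant in $x_j$ on the support cell of $\psi^{\ell_j}_{i_j,k_j}$, and $W(\cdot,\bm x_{-j})$ is $b^{-\ell_j}$-periodic in $x_j$. This gives zero for almost every $\bm x_{-j}$. The case $\ell_j=0$ cannot occur with $\ell'_j<\ell_j$, so no special case arises. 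Convergence of the Haar expansion in $L^2$ together with boundedness of multiplication by $W$ justifies passing the decomposition through $Q_{\bm\ell}(\cdot\,W)$.
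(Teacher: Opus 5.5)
Your proposal is correct and follows essentially the same route as the paper: you first show $Q_{\bm\ell}\bigl((Q_{\bm\ell'}g)W\bigr)=0$ whenever $\ell'_j<\ell_j$ for some $j$, by a slice-wise computation in $x_j$. The paper phrases this cancellation as $\int_I W\,dx_j=b\int_{I_{a_j}}W\,dx_j$, which is equivalent to your zero-sum-of-Haar-constants argument. You then conclude via contractivity of the orthogonal projection $Q_{\bm\ell}$, the bound $\|gW\|_{L^2}\le\|W\|_{L^\infty}\|g\|_{L^2}$, and orthogonality of the $Q_{\bm\ell'}g$, exactly as the paper does; your explicit justification for passing the $L^2$-convergent decomposition through $g\mapsto Q_{\bm\ell}(gW)$ is a small point the paper leaves implicit.
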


\begin{proof}
We first prove that $Q_{\boldsymbol{\ell}}\bigl((Q_{\boldsymbol{\ell}'}g)W\bigr)=0$
whenever \(\ell'_j<\ell_j\) for at least one coordinate \(j\). Fix such a \(j\). Since \(Q_{\boldsymbol{\ell}'}=\bigotimes_{r=1}^d(P_{\ell'_r}-P_{\ell'_r-1})\), the condition \(\ell'_j<\ell_j\) implies that, for almost every fixed \(\boldsymbol{x}_{-j}\), the section
$
x_j \mapsto (Q_{\boldsymbol{\ell}'}g)(x_j,\boldsymbol{x}_{-j})
$
is constant on every \(b\)-adic interval of length \(b^{-(\ell_j-1)}\).

We show that every Haar coefficient at level \(\boldsymbol{\ell}\) vanishes. Fix a tensor-product Haar function \(\psi_{\boldsymbol{a},\boldsymbol{k}}^{\boldsymbol{\ell}}\). In the \(j\)-th coordinate, its parent interval is
\[
I=[k_j b^{-(\ell_j-1)},(k_j+1)b^{-(\ell_j-1)}),
\]
and its active child is
\[
I_{a_j}=[(b k_j+a_j)b^{-\ell_j},(b k_j+a_j+1)b^{-\ell_j}).
\]
For fixed \(\boldsymbol{x}_{-j}\), let \(c(\boldsymbol{x}_{-j})\) be the constant value of \(x_j \mapsto (Q_{\boldsymbol{\ell}'}g)(x_j,\boldsymbol{x}_{-j})\) on \(I\). In the inner product \(\langle (Q_{\boldsymbol{\ell}'}g)W,\psi_{\boldsymbol{a},\boldsymbol{k}}^{\boldsymbol{\ell}}\rangle\), formula \eqref{formula_Haar} yields the following \(x_j\)-integral:
\[
c(\boldsymbol{x}_{-j})
\left[
b^{\ell_j/2}
\int_{I_{a_j}} W(x_j,\boldsymbol{x}_{-j})\,dx_j
-
b^{(\ell_j-2)/2}
\int_I W(x_j,\boldsymbol{x}_{-j})\,dx_j
\right].
\]
Since \(W\) has period \(b^{-\ell_j}\) in the \(j\)-th coordinate, we have
\[
\int_I W\,dx_j = b\int_{I_{a_j}} W\,dx_j.
\]
The displayed expression is therefore zero. Hence \(\langle (Q_{\boldsymbol{\ell}'}g)W,\psi_{\boldsymbol{a},\boldsymbol{k}}^{\boldsymbol{\ell}}\rangle=0\) for every \(\psi_{\boldsymbol{a},\boldsymbol{k}}^{\boldsymbol{\ell}}\), and $Q_{\boldsymbol{\ell}}\bigl((Q_{\boldsymbol{\ell}'}g)W\bigr)=0$ by \eqref{formula_Q}.

Consequently,
\begin{align*}
\|Q_{\boldsymbol{\ell}}(gW)\|_{L^2}
&=
\left\|Q_{\boldsymbol{\ell}}
\left(
\sum_{\substack{\boldsymbol{\ell}'\ge\boldsymbol{\ell}}}
(Q_{\boldsymbol{\ell}'}g)W
\right)
\right\|_{L^2} \\
&\le  
\left\|
\sum_{\substack{\boldsymbol{\ell}'\ge\boldsymbol{\ell}}}
(Q_{\boldsymbol{\ell}'}g)W
\right\|_{L^2} 
\le
\|W\|_{L^\infty}
\left\|
\sum_{\substack{\boldsymbol{\ell}'\ge\boldsymbol{\ell}}}
Q_{\boldsymbol{\ell}'}g
\right\|_{L^2}.
\end{align*}
The conclusion follows since the functions \(Q_{\boldsymbol{\ell}'}g\) are mutually orthogonal in \(L^2\).
\end{proof}

\begin{lemma}\label{lem:fixed-upper-joint-tail}
Under the assumptions of Lemma~\ref{lem:liu-parseval-projection}, suppose \(f_u^{(\bm\tau_u)}\in\mathcal H_{\mathrm{wav},\alpha,|u|,p,q}\) with \(\alpha_p>0\). Then, for every \(S\ge 0\),
\begin{equation}\label{eq:fixed-upper-joint-tail}
    \begin{aligned}
        &\sum_{\substack{\bm\ell\in\mathcal L_r(\bm k_r^+)\\
|\bm\ell|_1\ge S}}
\ \sum_{\bm h\in
B_{r+1,\bm \ell,s}(\bm k_r^+)}
|\widehat f(\bm h)|^2\\
\le
&D_{\alpha_p,q}^{|u|}
\,\bigl(\mathcal E_{|u|-1}(\lceil S\rceil)\bigr)^{(1-2/q)_+}
\,b^{-2\alpha_p S}
\,\|W(\bm k_{u,r}^+)\|_{L^\infty}^2
\,\|f_u^{(\bm\tau_u)}\|_{\mathrm{wav},\alpha,|u|,p,q}^{2}, 
    \end{aligned}
\end{equation}
where \(D_{\alpha_p,q}\) is a constant depending on \(\alpha_p\) and \(q\), and
\(\mathcal E_n(x):=\sum_{\nu=0}^n x^\nu/\nu!\).
\end{lemma}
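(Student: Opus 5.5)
The plan is to fix $\bm k_r^+$, write $g:=f_u^{(\bm\tau_u)}$ and $W:=W(\bm k_{u,r}^+)$, and reduce the left-hand side of \eqref{eq:fixed-upper-joint-tail} to a weighted tail of $\|Q_{\bm\ell'}g\|_{L^2}^2$, $\bm\ell'\in\mathbb N_0^{|u|}$. Then I would treat this tail exactly as in the proof of Lemma~\ref{lem:walsh-tail-outside-Ku}.

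\textbf{Step 1 (reduction to one level).} For each $\bm\ell\in\mathcal L_r(\bm k_r^+)$, Lemma~\ref{lem:liu-parseval-projection} gives
\[
\sum_{\bm h\in B_{r+1,\bm\ell,s}(\bm k_r^+)}|\widehat f(\bm h)|^2=\|Q_{\bm\ell_u}(gW)\|_{L^2}^2 .
\]
Since $\bm k_r^+$ is admissible for $\bm\ell$, each factor $W(k_j)$ has period $b^{-\ell_j}$. Lemma~\ref{lem:periodic-W-filters-haar}, applied in dimension $|u|$, then bounds this by
\[
\|W\|_{L^\infty}^2\sum_{\bm\ell'_u\ge\bm\ell_u}\|Q_{\bm\ell'_u}g\|_{L^2}^2 .
\]
Note that $\bm\ell$ may be nonzero outside $u$. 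But $B_{r+1,\bm\ell,s}(\bm k_r^+)$ requires $\Vec{\bm\mu}_1(\bm h_r^-)=\bm\ell$ while $\supp(\bm h)=u$. Outside $u$, $h_j=0$ forces $\ell_j=0$, so only $\bm\ell$ supported in $u$ contribute. Hence $|\bm\ell|_1=|\bm\ell_u|_1$.

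\textbf{Step 2 (exchange of summation).} Summing over $\bm\ell_u\in\mathbb N_0^{|u|}$ with $|\bm\ell_u|_1\ge S$ and swapping the order of summation, each $\bm\ell'_u$ is counted at most $\#\{\bm\ell_u\le\bm\ell'_u\}=\prod_{j\in u}(\ell'_j+1)$ times. It also satisfies $|\bm\ell'_u|_1\ge S$. This gives the bound
\[
\|W\|_{L^\infty}^2\sum_{|\bm\ell'_u|_1\ge S}\prod_{j\in u}(\ell'_j+1)\,\|Q_{\bm\ell'_u}g\|_{L^2}^2 .
\]
The factor $\prod_j(\ell'_j+1)\le\prod_j b^{\eta\ell'_j}C_\eta$ is subexponential. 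I would absorb it by sacrificing a small part of the decay: write $b^{-2\alpha_p|\bm\ell'|_1}\prod_j(\ell'_j+1)\le c_{\alpha_p}^{|u|}b^{-2\alpha_p' |\bm\ell'|_1}$ with some care. This is the main obstacle, since the statement keeps the full $b^{-2\alpha_p S}$.

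To avoid the loss, I would first try to bound the multiplicity in a sharper way. Only the $\bm\ell_u$ with $|\bm\ell_u|_1\ge S$ are counted. The cleaner route is to keep the exponential weight and sum the geometric series: for each $\bm\ell'_u$,
\[
\sum_{\bm\ell_u\le\bm\ell'_u}b^{2\alpha_p(|\bm\ell_u|_1-|\bm\ell'_u|_1)}\le(1-b^{-2\alpha_p})^{-|u|}.
\]
This requires weighting the level-$\bm\ell$ term by $b^{2\alpha_p|\bm\ell_u|_1}b^{-2\alpha_p S}\ge1$ before swapping, which is legitimate since $|\bm\ell_u|_1\ge S$. The result is
\[
\text{LHS}\le b^{-2\alpha_p S}(1-b^{-2\alpha_p})^{-|u|}\|W\|_{L^\infty}^2\sum_{|\bm\ell'_u|_1\ge S}b^{2\alpha_p|\bm\ell'_u|_1}\|Q_{\bm\ell'_u}g\|_{L^2}^2 .
\]

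\textbf{Step 3 (bounding the weighted sum).} It remains to bound $\sum_{\bm\ell'_u}b^{2\alpha_p|\bm\ell'_u|_1}\|Q_{\bm\ell'_u}g\|_{L^2}^2$ by the $\mathcal H_{\mathrm{wav}}$ norm. For $q\le2$, use \eqref{Walsh_bound} together with $\ell^q\subseteq\ell^2$ and \eqref{eqn:Hnormpq}, giving the bound $\|g\|^2_{\mathrm{wav},\alpha,|u|,p,q}$. For $q>2$ this sum may diverge, so I would instead not weight fully. I would put weight $b^{2\alpha_p|\bm\ell|_1}$ only through Hölder with exponents $q/2$ and $q'=q/(q-2)$, as in \eqref{eqn:LqLq'}. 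The second factor, after the geometric summation over $\bm\ell_u\le\bm\ell'_u$, reduces to $\sum_{n\ge\lceil S\rceil}\binom{n+|u|-1}{|u|-1}b^{-2q'\alpha_p n}$ times $b^{2q'\alpha_p S}$. By \cite[Lemma 13.24]{dick:pill:2010} this is at most $C^{|u|}\binom{\lceil S\rceil+|u|-1}{|u|-1}$, which in turn is at most $2^{\lceil S\rceil+|u|}$-free; more precisely it is bounded by $C^{|u|}\mathcal E_{|u|-1}(\lceil S\rceil)$. Raising to the power $1/q'=(1-2/q)$ gives the stated polynomial factor. The constant $D_{\alpha_p,q}$ then collects the geometric-series constants.
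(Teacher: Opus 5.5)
Your Step~1 is correct, and so is the weighted form of Step~2. Inserting $1\le b^{2\alpha_p(|\bm\ell_u|_1-S)}$ and summing the geometric series over $\bm\ell_u\le\bm\ell'_u$ gives a complete proof for $q\in[1,2]$, with constant $(1-b^{-2\alpha_p})^{-|u|}$. The paper takes a slightly different route: it keeps the plain multiplicity $H(\bm\ell'_u)=\#\{\bm\ell\in\mathcal L_r(\bm k_r^+):\bm\ell_u\le\bm\ell'_u,\ |\bm\ell|_1\ge S\}$ and bounds it by $\binom{t+|u|}{|u|}$, where $t=|\bm\ell'_u|_1-\lceil S\rceil$. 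Both work for $q\le 2$.

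The case $q>2$ is not actually proved. You correctly note that $\sum_{\bm\ell'_u}b^{2\alpha_p|\bm\ell'_u|_1}\|Q_{\bm\ell'_u}g\|_{L^2}^2$ may diverge. However, you never say how the multiplicity is split between the two H\"older factors, and the second factor you write down is not what either version of Step~2 produces.
\begin{itemize}
\item Applied to the fully weighted bound, the multiplicity is of order $b^{2\alpha_p(|\bm\ell'_u|_1-S)}$. Each term of the second factor is then at least $b^{-2q'\alpha_p S}$, so the sum over all $|\bm\ell'_u|_1\ge S$ is infinite.
\item Applied to the unweighted sum $\sum_{\bm\ell'_u}H(\bm\ell'_u)\|Q_{\bm\ell'_u}g\|_{L^2}^2$ as in \eqref{eqn:LqLq'}, the second factor is $\bigl(\sum_{\bm\ell'_u}H(\bm\ell'_u)^{q'}b^{-2q'\alpha_p|\bm\ell'_u|_1}\bigr)^{1/q'}$.
\end{itemize}
The second of these reduces to your $\sum_{n\ge\lceil S\rceil}\binom{n+|u|-1}{|u|-1}b^{-2q'\alpha_p n}$ only if $H\le C^{|u|}$. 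That is false, since $H$ grows like $\binom{t+|u|}{|u|}$.

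The missing idea also answers your Step~2 worry. Your product bound $\prod_j(\ell'_j+1)$ throws away the constraint $|\bm\ell_u|_1\ge S$. Once that constraint is kept, the multiplicity depends only on the excess $t$. It can therefore be absorbed into decay in $t$ at cost $C^{|u|}$, without touching $b^{-2\alpha_p S}$.

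The paper does this using $\binom{t+|u|}{|u|}\le z^{-|u|}(1+z)^{t+|u|}$ for small $z>0$. It then applies Vandermonde's identity to $\binom{\lceil S\rceil+t+|u|-1}{|u|-1}$ to extract $\mathcal E_{|u|-1}(\lceil S\rceil)$.

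A shorter repair within your framework is to weight with half the exponent, $1\le b^{\alpha_p(|\bm\ell_u|_1-S)}$. The weighted multiplicity is then at most $(1-b^{-\alpha_p})^{-|u|}b^{\alpha_p(|\bm\ell'_u|_1-S)}$, and H\"older gives
\begin{align*}
\text{LHS}&\le\frac{\|W\|_{L^\infty}^2}{(1-b^{-\alpha_p})^{|u|}\,b^{\alpha_p S}}\sum_{|\bm\ell'_u|_1\ge S}b^{\alpha_p|\bm\ell'_u|_1}\|Q_{\bm\ell'_u}g\|_{L^2}^2\\
&\le\frac{\|W\|_{L^\infty}^2\,\|g\|^2_{\mathrm{wav},\alpha,|u|,p,q}}{(1-b^{-\alpha_p})^{|u|}\,b^{\alpha_p S}}\Bigl(\sum_{n\ge\lceil S\rceil}\binom{n+|u|-1}{|u|-1}b^{-q'\alpha_p n}\Bigr)^{1/q'}.
\end{align*}
By Lemma~13.24 of \cite{dick:pill:2010}, the last factor is at most $b^{-\alpha_p\lceil S\rceil}\bigl((1-b^{-q'\alpha_p})^{-|u|}\binom{\lceil S\rceil+|u|-1}{|u|-1}\bigr)^{1/q'}$, which restores the full $b^{-2\alpha_p S}$. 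Vandermonde gives $\binom{\lceil S\rceil+|u|-1}{|u|-1}\le 2^{|u|-1}\mathcal E_{|u|-1}(\lceil S\rceil)$. Together these yield the stated bound with $(1/q')=(1-2/q)_+$.
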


\begin{proof}
By Lemma~\ref{lem:liu-parseval-projection}, the left-hand side of \eqref{eq:fixed-upper-joint-tail} equals
\[
\sum_{\substack{\bm\ell\in\mathcal L_r(\bm k_r^+)\\
|\bm\ell|_1\ge S}}
\left\|
Q_{\bm\ell_u}
\bigl(f_u^{(\bm\tau_u)} W(\bm k_{u,r}^+)\bigr)
\right\|_{L^2}^{2}.
\]

For each \(\bm\ell\in\mathcal L_r(\bm k_r^+)\), the definition of \(\mathcal A_{r,\bm\ell,s}\) implies that \(W(\bm k_{u,r}^+)\) has period \(b^{-\ell_j}\) in the \(j\)-th coordinate for every \(j\in u\). Applying Lemma~\ref{lem:periodic-W-filters-haar} gives
\[
\|Q_{\bm\ell_u}(f_u^{(\bm\tau_u)}W(\bm k_{u,r}^+))\|_{L^2}^2
\le
\|W(\bm k_{u,r}^+)\|_{L^\infty}^2
\sum_{\substack{\bm{\ell}'_u\in\mathbb N_0^{|u|}\\\bm{\ell}'_u\ge\bm\ell_u}}
\|Q_{\bm{\ell}'_u}f_u^{(\bm\tau_u)}\|_{L^2}^2 .
\]
Summing over \(\bm\ell\in\mathcal L_r(\bm k_r^+)\) with \(|\bm\ell|_1\ge S\) and interchanging the sums yields
\[
\sum_{\substack{\bm\ell\in\mathcal L_r(\bm k_r^+)\\
|\bm\ell|_1\ge S}}
\|Q_{\bm\ell_u}(f_u^{(\bm\tau_u)}W(\bm k_{u,r}^+))\|_{L^2}^2
\le
\|W(\bm k_{u,r}^+)\|_{L^\infty}^2
\sum_{\bm{\ell}'_u\in\mathbb N_0^{|u|}}
H(\bm{\ell}'_u)\,
\|Q_{\bm{\ell}'_u}f_u^{(\bm\tau_u)}\|_{L^2}^2 ,
\]
where
\[
H(\bm{\ell}'_u)
:=
\bigl|\bigl\{\bm\ell\in\mathcal L_r(\bm k_r^+):
\bm\ell_u\le\bm{\ell}'_u,\ |\bm\ell|_1\ge S\bigr\}\bigr|.
\]

Since \(\supp(\bm\ell)\subseteq u\) for every \(\bm\ell\in\mathcal L_r(\bm k_r^+)\), we have \(|\bm\ell|_1=|\bm\ell_u|_1\). If \(|\bm{\ell}'_u|_1<S\), then \(H(\bm{\ell}'_u)=0\). For \(|\bm{\ell}'_u|_1=\lceil S\rceil+t\) with \(t\ge0\), setting \(\bm{\Delta\ell}_u:=\bm{\ell}'_u-\bm\ell_u\) gives
\begin{equation}\label{eqn:Hl}
H(\bm{\ell}'_u)\le \bigl|\{\bm{\Delta\ell}_u\in\mathbb N_0^{|u|}:|\bm{\Delta\ell}_u|_1\le t\}\bigr|
=\binom{t+|u|}{|u|}.
\end{equation}

We now distinguish two cases.

\textbf{Case 1: \(q\in[1,2]\).} Substituting \eqref{eqn:Hl} yields
\begin{align*}
&\sum_{\bm{\ell}'_u\in\mathbb N_0^{|u|}}H(\bm{\ell}'_u)\|Q_{\bm{\ell}'_u}f_u^{(\bm\tau_u)}\|_{L^2}^2\\
		\le & \sum_{t=0}^\infty\sum_{\substack{\bm{\ell}'_u\in\mathbb N_0^{|u|}\\ |\bm{\ell}'_u|_1=\lceil S\rceil +t}}\binom{t+|u|}{|u|}\|Q_{\bm{\ell}'_u}f_u^{(\bm\tau_u)}\|_{L^2}^2
		\\
		= & b^{-2\alpha_p \lceil S\rceil}
		\sum_{t=0}^{\infty}\binom{t+|u|}{|u|}b^{-2\alpha_p t}
		\sum_{\substack{\bm{\ell}'_u\in\mathbb N_0^{|u|}\\ |\bm{\ell}'_u|_1=\lceil S\rceil +t}}b^{2\alpha_p |\bm{\ell}'_u|_1}\|Q_{\bm{\ell}'_u}f_u^{(\bm\tau_u)}\|_{L^2}^2
		\\
		\le & b^{-2\alpha_p S} \|f_u^{(\bm\tau_u)}\|_{\mathrm{wav},\alpha,|u|,p,2}^{2}	\sum_{t=0}^{\infty}\binom{t+|u|}{|u|}b^{-2\alpha_p t}\\
        \leq &	.b^{-2\alpha_p S} \|f_u^{(\bm\tau_u)}\|_{\mathrm{wav},\alpha,|u|,p,q}^{2}	\left(1-b^{-2\alpha_p}\right)^{-|u|-1},
\end{align*}
where the last inequality follows from \eqref{eqn:Hnormpq} and \cite[Lemma 13.24]{dick:pill:2010}.

\textbf{Case 2: \(q>2\).} Let \(q':=q/(q-2)\). An argument analogous to \eqref{eqn:LqLq'} gives
\begin{equation}\label{eqn:caseqlarger2}
\sum_{\bm{\ell}'_u\in\mathbb N_0^{|u|}}H(\bm{\ell}'_u)\|Q_{\bm{\ell}'_u}f_u^{(\bm\tau_u)}\|_{L^2}^2
\le
\|f_u^{(\bm\tau_u)}\|_{\mathrm{wav},\alpha,|u|,p,q}^{2}
\Biggl(\sum_{\bm{\ell}'_u\in\mathbb N_0^{|u|}}
H(\bm{\ell}'_u)^{q'} b^{-2q' \alpha_p |\bm{\ell}'_u|_1}
\Biggr)^{1/q'}.
\end{equation}

The number of vectors \(\bm{\ell}'_u\) with \(|\bm{\ell}'_u|_1=\lceil S\rceil+t\) is \(\binom{\lceil S\rceil+t+|u|-1}{|u|-1}\). By Vandermonde's identity,
\[
\binom{\lceil S\rceil+t+|u|-1}{|u|-1}
=
\sum_{\nu=0}^{|u|-1}
\binom{\lceil S\rceil}{\nu}
\binom{t+|u|-1}{|u|-1-\nu}.
\]
Using \eqref{eqn:Hl}, we obtain
	\[
	\begin{aligned}
		&\sum_{\bm{\ell}'_u\in\mathbb N_0^{|u|}}H(\bm{\ell}'_u)^{q'} b^{-2q' \alpha_p |\bm{\ell}'_u|_1}\\
		\le& b^{-2q' \alpha_p \lceil S\rceil}
		\sum_{\nu=0}^{|u|-1}\binom{\lceil S\rceil}{\nu}
		\sum_{t=0}^\infty
		\binom{t+|u|-1}{|u|-1-\nu}
		\binom{t+|u|}{|u|}^{q'}b^{-2q' \alpha_p t}.
	\end{aligned}
	\]
Choose \(z>0\) small enough that \((1+z)^{1+q'}b^{-2q'\alpha_p}<1\). For each fixed \(0\le\nu\le |u|-1\), the binomial theorem yields
	\begin{align*}
	&\sum_{t=0}^\infty
	\binom{t+|u|-1}{|u|-1-\nu}
	\binom{t+|u|}{|u|}^{q'}b^{-2q'\alpha_pt}\\
	\leq &\sum_{t=0}^\infty
	\left[z^{-(|u|-1-\nu)}(1+z)^{t+|u|-1}\right]
	\left[z^{-|u|}(1+z)^{t+|u|}\right]^{q'}
	b^{-2q'\alpha_pt}\\
	=&
	z^{-(|u|-1-\nu)-|u|q'}(1+z)^{|u|-1+|u|q'}
	\sum_{t=0}^\infty
	\left((1+z)^{1+q'}b^{-2q'\alpha_p}\right)^t\\
	=&\frac{z^{1+\nu}}{(1+z)\left(1-(1+z)^{1+q'}b^{-2q'\alpha_p}\right)}
\left( \frac{1+z}{z} \right)^{|u|(1+q')}.
\end{align*}
Hence, for a sufficiently large constant \(D_{\alpha_p,q}\),
\[
\sum_{\bm{\ell}'_u\in\mathbb N_0^{|u|}}H(\bm{\ell}'_u)^{q'} b^{-2q' \alpha_p |\bm{\ell}'_u|_1}
\le
\frac{(D_{\alpha_p,q})^{q'|u|}}{b^{2q' \alpha_p \lceil S\rceil}}
\sum_{\nu=0}^{|u|-1}\binom{\lceil S\rceil}{\nu}
\le
\frac{(D_{\alpha_p,q})^{q'|u|}}{b^{2q' \alpha_p S}}
\sum_{\nu=0}^{|u|-1}\frac{(\lceil S\rceil)^\nu}{\nu!}.
\]
Substituting this bound into \eqref{eqn:caseqlarger2} proves \eqref{eq:fixed-upper-joint-tail} for \(q>2\). Increasing \(D_{\alpha_p,q}\) if necessary to cover \(q\in[1,2]\) completes the proof.
\end{proof}

\begin{proof}[Proof of Lemma~\ref{lem:walsh-tail-outside-Ku-deriv}]
Define
\[
S_{r,\alpha_p}(\bm h)
:=
(1-\alpha_p)\mu_r(\bm h)+\alpha_p\mu_{r+1}(\bm h).
\]
For \(\bm h\in B_{r+1,\bm\ell,s}(\bm k_r^+)\) with \(u=\supp(\bm k_r^+)\),
\begin{equation}\label{eq:rho-upper-lower}
S_{r,\alpha_p}(\bm h)
=
\mu_r(\bm k_r^+)+\alpha_p|\bm\ell_u|_1.
\end{equation}

Fix \(\bm\tau_u\in\{1{:}r\}^{|u|}\), and consider the admissible
upper parts \(\bm k_{u,r}^+\) with
\(\bm n_r(\bm k_{u,r}^+)=\bm\tau_u\). Write
\[
\sigma:=|\bm\tau_u|_1,
\qquad
M:=\mu_r(\bm k_{u,r}^+),
\qquad
\lambda:=(1-2/q)_+.
\]
For fixed \(M\), the number of admissible upper parts \(\bm k_{u,r}^+\)
satisfying \(\bm n_r(\bm k_{u,r}^+)=\bm\tau_u\) and
\(\mu_r(\bm k_{u,r}^+)=M\) is at most
\begin{equation}\label{eq:upper-part-count}
(b-1)^\sigma\binom{M-1}{\sigma-1}.
\end{equation}
Indeed, after ignoring the restrictions within the coordinates,
we choose the \(\sigma\) nonzero digit positions summing to $M$, and then their nonzero values. Moreover, \cite[Lemma~4]{goda2026quasi} gives
\begin{equation}\label{eq:W-upper-part-bound}
\|W(\bm k_{u,r}^+)\|_{L^\infty}^2
\le
\left(\frac{1+\pi}{2}\right)^{2|u|}
b^{2\sigma-2M}.
\end{equation}

We first consider upper parts with \(M\le T\), which exist only when \(T\ge\sigma\). By \eqref{eq:rho-upper-lower}, a block with such an upper part lies outside \(K_{u,r,\alpha_p}(T)\) exactly when \(\alpha_p|\bm\ell_u|_1>T-M\). We may assume \(\alpha_p>0\), since otherwise \(\alpha_p|\bm\ell_u|_1=0\) and the block is inside \(K_{u,r,\alpha_p}(T)\). Applying Lemma~\ref{lem:fixed-upper-joint-tail} with \(S=(T-M)/\alpha_p\), followed by \eqref{eq:W-upper-part-bound}, shows that the contribution of one such upper part is at most
\[
D_{\alpha_p,q}^{|u|}
\left(\frac{1+\pi}{2}\right)^{2|u|}
b^{2\sigma}\left(\mathcal  E_{|u|-1}(\lceil (T-M)/\alpha_p \rceil)\right)^\lambda b^{-2T}
\|f_u^{(\bm\tau_u)}\|_{\mathrm{wav},\alpha,|u|,p,q}^2.
\]
Multiplying by the count \eqref{eq:upper-part-count} and summing over \(\sigma\le M\le \lfloor T\rfloor\), we get
\begin{align*}
&\sum_{M=\sigma}^{\lfloor T\rfloor}
\binom{M-1}{\sigma-1}\left(\mathcal  E_{|u|-1}(\lceil (T-M)/\alpha_p \rceil)\right)^\lambda
\\
\le&
\max\left(1,\alpha^{-\lambda(|u|-1)}_p\right)\sum_{M=\sigma}^{\lfloor T\rfloor}
\binom{M-1}{\sigma-1}\left(\mathcal E_{|u|-1}(1+T-M)\right)^\lambda\\
\leq &
\max\left(1,\alpha^{-\lambda(|u|-1)}_p\right)\mathcal E_{r|u|}(T)\left(\mathcal E_{|u|-1}(T)\right)^\lambda,
\end{align*}
where the last inequality uses $\sum_{M=\sigma}^{\lfloor T\rfloor}
\binom{M-1}{\sigma-1}=\binom{\lfloor T\rfloor}{\sigma}\leq \mathcal E_{r|u|}(T)$ since $\sigma\leq r|u|$.
Thus, for fixed \(\bm\tau_u\), the total contribution of upper parts with \(M\le T\) satisfies
\begin{equation}\label{eq:upper-parts-le-T}
    \begin{aligned}
        \sum_{\substack{\bm h\in\mathbb N_0^{s}\setminus K_{u,r,\alpha_p}(T)\\
\supp(\bsh)=u,\bm n_r(\bm h_u)=\bm\tau_u,\ \mu_r(\bm h_u)\le T}}
|\widehat f(\bm h)|^2
\le 
D_{r,\alpha_p,q}^{|u|}
\frac{\mathcal E_{r|u|}(T)\left(\mathcal E_{|u|-1}(T)\right)^\lambda}{b^{2T}}
\|f_u^{(\bm\tau_u)}\|_{\mathrm{wav},\alpha,|u|,p,q}^2,
    \end{aligned}
\end{equation}
where 
$$D_{r,\alpha_p,q}=D_{\alpha_p,q}\left(\frac{1+\pi}{2}\right)^{2}b^{2r}(b-1)^{r}\max\left(1,\alpha^{-\lambda}_p\right).$$

It remains to handle upper parts with \(M>T\). For every \(\bm\ell\in\mathcal L_r(\bm k_r^+)\), equation \eqref{eq:rho-upper-lower} shows that every \(\bm h\in B_{r+1,\bm\ell,s}(\bm k_r^+)\) satisfies \(S_{r,\alpha_p}(\bm h)=M+\alpha_p|\bm\ell_u|_1>T\). Hence every such block lies outside \(K_{u,r,\alpha_p}(T)\). For a fixed upper part \(\bm k_r^+\), Lemma~\ref{lem:liu-parseval-projection} implies
\begin{align*}
&\sum_{\bm\ell\in\mathcal L_r(\bm k_r^+)}
\ \sum_{\bm h\in
B_{r+1,\bm \ell,s}(\bm k_r^+)}
|\widehat f(\bm h)|^2
=
\sum_{\bm\ell\in\mathcal L_r(\bm k_r^+)}
\|Q_{\bm\ell_u}(f_u^{(\bm\tau_u)} W(\bm k_{u,r}^+))\|_{L^2}^2
\\
\le &
\|f_u^{(\bm\tau_u)} W(\bm k_{u,r}^+)\|_{L^2}^2
\le
\|W(\bm k_{u,r}^+)\|_{L^\infty}^2
\|f_u^{(\bm\tau_u)}\|_{L^2}^2\le
\left(\frac{1+\pi}{2}\right)^{2|u|}
b^{2\sigma-2M}
\|f_u^{(\bm\tau_u)}\|_{L^2}^2,
\end{align*}
where the last inequality follows from
\eqref{eq:W-upper-part-bound}. 

Set $T_*=\max(\lfloor T\rfloor+1,\sigma)$.
Grouping upper parts by \(M\) and using \eqref{eq:upper-part-count}, we obtain
\begin{equation}\label{eq:upper-parts-gt-T}
    \begin{aligned}
        &\sum_{\substack{\bm h\in\mathbb N_0^{s}\setminus K_{u,r,\alpha_p}(T)\\
\supp(\bsh)=u,\bm n_r(\bm h_u)=\bm\tau_u,\ \mu_r(\bm h_u)> T}}
|\widehat f(\bm h)|^2
\\
\le & \left(\frac{1+\pi}{2}\right)^{2|u|}
\bigl(b^2(b-1)\bigr)^\sigma
\left(
\sum_{M=T_*}^\infty\binom{M-1}{\sigma-1}b^{-2M}
\right)
\|f_u^{(\bm\tau_u)}\|_{L^2}^2
\\
\le &
\left(\frac{1+\pi}{2}\right)^{2|u|}
\bigl(b^2(b-1)(1-b^{-2})\bigr)^{\sigma}
\mathcal E_{\sigma-1}(T)b^{-2T}
\|f_u^{(\bm\tau_u)}\|_{L^2}^2.
    \end{aligned},
\end{equation}
where the last inequality uses \cite[Lemma 13.24]{dick:pill:2010} to bound
\begin{equation*}
\sum_{M=T_*}^\infty
\binom{M-1}{\sigma-1}b^{-2M}
\le
b^{-2T_*}\binom{T_*-1}{\sigma-1}(1-b^{-2})^{-\sigma}\leq b^{-2T}\mathcal E_{\sigma-1}(T_+)(1-b^{-2})^{-\sigma}.
\end{equation*}
Finally, if $\alpha=0, p=q=2$, we substitute $\|f_u^{(\bm\tau_u)}\|_{L^2}=\|f_u^{(\bm\tau_u)}\|_{\mathrm{wav},\alpha,|u|,p,q}$ into \eqref{eq:upper-parts-gt-T}. If $\alpha_p>0$, we substitute the bound \eqref{eqn:L2embedding} for $\|f_u^{(\bm\tau_u)}\|_{L^2}$ into \eqref{eq:upper-parts-gt-T} and add \eqref{eq:upper-parts-le-T}. In both cases, we have
\begin{align*}
\sum_{\substack{\bm h\in\mathbb N_0^{s}\setminus K_{u,r,\alpha_p}(T)\\
\supp(\bsh)=u,\bm n_r(\bm h_u)=\bm\tau_u}}
|\widehat f(\bm h)|^2
\le 
 C_{r,\alpha_p,q}^{|u|}
\frac{\mathcal E_{r|u|}(T_+)\left(\mathcal E_{|u|-1}(T_+)\right)^\lambda}{b^{2T}}
\|f_u^{(\bm\tau_u)}\|_{\mathrm{wav},\alpha,|u|,p,q}^2
\end{align*}
for a sufficiently large constant \(C_{r,\alpha_p,q}\). The proof is complete upon summing over
\(\bm\tau_u\in\{1{:}r\}^{|u|}\) and bounding
\begin{align*}
   \sum_{\bm\tau_u\in\{1{:}r\}^{|u|}}
\|f_u^{(\bm\tau_u)}\|_{\mathrm{wav},\alpha,|u|,p,q}^2
\leq r^{|u|} 
\|f_u\|_{\mathrm{mix}^*,r,\alpha,|u|,p,q}^2. 
\end{align*}
\end{proof}

\section{Proof of Corollary~\ref{cor:deriv}}\label{app:cor:deriv}
\begin{proof}
Using an argument analogous to \eqref{eqn:tractabilitysum}, the sum \(\psi_{r,\alpha,s,p,q,\gamma}(m)\) is bounded by \(C_{\varepsilon,\Upsilon,r,\alpha,p,q} b^{\varepsilon m}\), with \(C_{\varepsilon,\Upsilon,r,\alpha,p,q}\) independent of \(s\) and \(m\).

It remains to bound $\psi'_{r,\alpha,s,p,q,\gamma}(m)$. 
Fix $\zeta>0$ and $n\in\mathbb N_0$, and set
$x=(r+\alpha_p)m$. Since $m\ge0$ implies $x\ge0$, expanding
the definition of $\mathcal E_n(x)$ gives
\begin{align*}
\sup_{m\ge0}b^{-\zeta m}
\mathcal E_n(x)
&\leq
\sup_{x\ge 0}
\sum_{\nu=0}^n\frac{x^\nu}{\nu!}
\exp\left(-\frac{\zeta\log b}{r+\alpha_p}x\right)\\
&\le
\sum_{\nu=0}^n\frac{1}{\nu!}
\sup_{x\ge0}
x^\nu\exp\left(-\frac{\zeta\log b}{r+\alpha_p}x\right)\\
&\overset{(1)}{\leq}
\sum_{\nu=0}^n
\left(\frac{r+\alpha_p}{\zeta\log b}\right)^\nu
\le \left(\frac{r+\alpha_p}{\zeta\log b}\right)^{n+1}
\end{align*}
if $ 2\zeta\log b\leq r+\alpha_p$, where (1) follows from
\[
\sup_{x\ge0}x^\nu e^{-cx}=\left(\frac{\nu}{ce}\right)^\nu
\quad\text{and}\quad
\nu!\ge\left(\frac{\nu}{e}\right)^\nu.
\]

Therefore, for sufficiently small $\varepsilon>0$, applying this bound with $\zeta=\varepsilon/2$ to both $\mathcal E_{r|u|}(x)$ and $\mathcal E_{|u|-1}(x)$ in
$\mathfrak P_{|u|,r,q}(x)$ yields
\begin{align*}
&b^{-\varepsilon m}\psi'_{r,\alpha,s,p,q,\gamma}(m)\\
\le &
\sum_{\emptyset\ne u\subseteq1{:}s}
C_{r,\alpha_p,q}^{|u|}|u|!
\left(\prod_{j\in u}\Upsilon_j^{\frac{1}{r+\alpha_p+1/2}}\right)
\,b^{-\varepsilon m/2}
\mathcal E_{r|u|}(x)
\left(b^{-\varepsilon m/2}\mathcal E_{|u|-1}(x)\right)^{(1-2/q)_+}\\
\le &
\sum_{\emptyset\ne u\subseteq1{:}s}
C_{r,\alpha_p,q}^{|u|}|u|!
\left(\prod_{j\in u}\Upsilon_j^{\frac{1}{r+\alpha_p+1/2}}\right)
 \left(\frac{2(r+\alpha_p)}{\varepsilon\log b}\right)^{r|u|+1+|u|(1-2/q)_+}\\
\le &
 \left(\frac{2(r+\alpha_p)}{\varepsilon\log b}\right)\sum_{\emptyset\ne u\subseteq1{:}s}
\left(C_{r,\alpha_p,q}\left(\frac{2(r+\alpha_p)}{\varepsilon\log b}\right)^{r+(1-2/q)_+}\right)^{|u|}|u|!\prod_{j\in u}\Upsilon_j^{\frac{1}{r+\alpha_p+1/2}}.
\end{align*}
By \cite[Theorem~2]{Pan2026Sharp}, the sum of these quantities over all finite subsets \(u\subseteq\mathbb N\) is finite. Hence,
 $\psi'_{r,\alpha,s,p,q,\gamma}(m)\leq C'_{\varepsilon,\Upsilon,r,\alpha,p,q} b^{\varepsilon m}$, with $C'_{\varepsilon,\Upsilon,r,\alpha,p,q}$ independent of $s$ and $m$.
\end{proof}

\bibliographystyle{siamplain}
\bibliography{references}
\end{document}